\newcommand{\f}{\ensuremath{f}}
\newcommand{\n}{\ensuremath{n}}
\newcommand{\ie}{i.e.} 
\newcommand{\markupdraft}[2]{
    \ifthenelse{\equal{#1}{display}}{#2}{}
    \ifthenelse{\equal{#1}{color}}{\color{#2}}{}
}
\newcommand{\newcolored}[3][]{{\markupdraft{color}{#2}#3}
    \ifthenelse{\equal{#1}{}}{}{\markupdraft{display}{{\color{yellow!70!black}[#1]}}}} 
\newcommand{\del}[2][]{{\markupdraft{display}{{\color{yellow!80!black}{\small #2}}}}} 
\newcommand{\nnew}[2][]{\newcolored[#1]{red}{#2}}
\newcommand{\indraftonly}[1]{{#1}}  
\renewcommand{\indraftonly}[1]{}\renewcommand{\markupdraft}[2]{}  
\renewcommand{\del}[2][]{}
\newcommand{\done}[1]{}
\newcommand{\cheikh}[1]{\indraftonly{\color{orange!60!black}~#1$_{-\!\mathrm{Cheikh}}$}}
\newcommand{\ba}{\begin{eqnarray}}
\newcommand{\ea}{\end{eqnarray}}
\newcommand{\baStar}{\begin{eqnarray*}}
\newcommand{\eaStar}{\end{eqnarray*}}
\newcommand{\sublevel}{\mathcal{L}^{\leq}}
\newcommand{\level}{\mathcal{L}}
     \newcommand{\foncfast}[4]{#2 \in #1 \mapsto #3 \in #4}
  \newcommand{\foncsmall}[2]{#1 \mapsto #2}
   \newcommand{\fun}[4]{\begin{array}{l r c l}
& #1 & \longrightarrow & #2 \\
    & #3 & \mapsto & #4 \end{array}}
    \newcommand{\sbar}{\rule[-1pt]{0.4pt}{9pt}}   
\newcommand{\dsbar}{\sbar\,\sbar}       
    \newcommand{\norm}[1]{\,\dsbar\,#1\,\dsbar\,}
\newcommand{\dsp}{\displaystyle}
\newcommand{\acco}[1]{\left\{#1\right\}} 
\newcommand{\pare}[1]{\left(#1\right)}
\newcommand{\croc}[1]{\left[#1\right]}
\newcommand{\abs}[1]{\left\lvert#1\right\rvert}
\def\R{{\mathbb R}}
\def\diff{{\mathrm d}}
\def\Z{{\mathbb Z}}
\def\Q{{\mathbb Q}}
\def\B { {\mathcal B } }
\def\S { {\mathcal S } }
\def\image {\textrm{Im} }
\journalname{JOTA}
\newcommand{\ph}{PH}
\newcommand{\si}{SI}
\newcommand{\phone}{\ph$_1$}
\newcommand{\phalpha}{\ph$_\alpha$}
\begin{document}

\title{
Scaling-invariant Functions versus Positively Homogeneous Functions
}


\author{Cheikh Toure \and Armand Gissler \and  Anne Auger \and Nikolaus Hansen}
\institute{
Inria and CMAP, Ecole Polytechnique, IP Paris, France \\
firstname.lastname@inria.fr\\cheikh.toure@polytechnique.edu
}

\date{Received: date / Accepted: date}

\maketitle

\begin{abstract}
Scaling-invariant functions preserve the order of points when the points are scaled  by the same positive scalar (usually with respect to a unique reference point).
Composites of strictly monotonic functions with positively homogeneous functions are scaling-invariant with respect to zero. We prove in this paper that also the reverse is true for large classes of scaling-invariant functions. Specifically, we give necessary and sufficient conditions for scaling-invariant functions to be composites of a strictly monotonic function with a positively homogeneous function. We also study sublevel sets of scaling-invariant functions generalizing well-known properties of positively homogeneous functions.
\end{abstract}

~\\Communicated by Juan-Enrique Martinez Legaz.

\keywords{scaling-invariant function \and positively homogeneous function \and compact level set.}
\subclass{49J52 \and  54C35}


\InputIfFileExists{toggles.tex}{}{}

\section{Introduction}
A function $\f:\mathbb{R}^n\to\mathbb{R}$ is scaling-invariant (\si) with respect to a reference point $x^{\star}\in\mathbb{R}^n$ if for all $x,y\in\mathbb{R}^n$ and $\rho>0$:
\begin{align}
\f(x^{\star} + x)\leq f(x^{\star} + y) \iff f(x^{\star} + \rho x )\leq f(x^{\star} + \rho y)
\enspace,
\label{scaling-invariance-definition}
\end{align}
that is, the $f$-order of any two points is invariant under a multiplicative change of their distance to the reference point---the order only depends on their direction and their \emph{relative} distance to the reference.
Scaling-invariant functions appear naturally when studying the convergence of comparison-based optimization algorithms where the update of the state of the algorithm is using $\f$ only through comparisons of candidate solutions~\cite{auger2016linear,fournier2011lower}. A famous example of a comparison-based optimization algorithm is the Nelder-Mead method~\cite{nelder1965simplex}.

A function $p:\mathbb{R}^n\to\mathbb{R}$ is positively homogeneous (\ph) with degree $\alpha > 0$ (\phalpha) if for all $x\in\mathbb{R}^n$ and $\rho>0$:
\begin{equation}\label{eq:ph}
p(\rho \,x)=\rho^\alpha p(x)
\enspace.
\end{equation}
Positively homogeneous functions are scaling-invariant with respect to $x^\star = 0$. We also consider that $x \mapsto p(x-x^\star)$ is positively homogeneous w.r.t.\ $x^\star$ when $p$ is positively homogeneous.
Linear functions, norms, and convex quadratic functions are positively homogeneous.
We can define \ph\ functions piecewise on cones or half-lines, because a function is \phalpha\ if and only if \eqref{eq:ph} is satisfied within each cone or half-line (which is not the case with \si\ functions where $x$ and $y$ in \eqref{scaling-invariance-definition} can belong to different cones).
For example, the function $p: \R^{\n} \to \R$ defined as $p(x) = x_{1}$ if $x_{1} x_{2} > 0$ and $p(x) = 0$ otherwise, is \phone.
Positively homogeneous functions and in particular increasing
positively homogenous functions are well-studied in the context of Monotonic Analysis~\cite{dutta2004monotonic,rubinov1998duality,rubinov2003strictly} or nonsmooth analysis and nonsmooth optimization~\cite{gorokhovik2016positively}.
Specifically, non-linear programming problems where the objective function and constraints are positively homogeneous are analyzed in~\cite{lasserre2002mathematical} whereas saddle representations of continuous positively homogeneous functions by linear functions are established in~\cite{gorokhovik2018saddle}.
The (left) composition of a \ph\ function with a strictly monotonic function is \si\ while this composite function is in general not \ph. One of the questions we investigate in this paper is to which extent \si\ functions and
composites of \ph\ functions with strictly monotonic functions are the same.
We prove that a \emph{continuous} \si\ function is always the composite of a strictly monotonic function with a \ph\ function.
We give necessary and sufficient conditions for an \si\ function to be the composite of a strictly monotonic function with a \ph\ function in the general case.

Only level sets or sublevel sets matter to determine the difficulty of an \si\ problem optimized with a comparison-based algorithm. We investigate different properties of level sets thereby generalizing properties that are known for \ph\ functions, including a formulation of the Euler homogenous function theorem that holds for \ph\ functions.

\subsection*{Notation\del{s:}}
We denote $\R_{+}$ the interval $[0,+\infty)$, $\R_{-} = (-\infty, 0 ]$, $\Z$ the set of all integers, $\mathbb{Z}_{+}$ the set of all non-negative integers and $\Q$ the set of rational numbers. The Euclidean norm is denoted by $\|.\|$. For $x\in\R^{\n}$ and $\rho > 0$, we denote by $\B\pare{x, \rho} = \acco{ y\in \R^{\n}; \| x-y \| < \rho }$ the open ball centered at $x$ and of radius $\rho$,
$\overline{\B\pare{x, \rho}}$ its closure and $\S\pare{x, \rho}$ its boundary. When they are centered at $0$, we denote $\B_{\rho}=\B\pare{0, \rho}$, $\overline{\B}_{\rho}=\overline{\B\pare{0, \rho}}$ and $\S_{\rho} = \S\pare{0, \rho}$.
We refer to a proper interval containing more than a single element as \emph{nontrivial} interval.
For a nontrivial interval $I \subset \R$ and a function $\varphi: I \to \R$, we use the terminology of strictly increasing (respectively strictly decreasing) if for all $a, b \in I$ with $a < b$, $\varphi(a) < \varphi(b)$ (respectively $\varphi(a) > \varphi(b)$). For a real number $\rho$ and a subset $A \subset \R^{\n}$, we define $\rho A = \acco{\rho\, x ; \,x \in A}$. For a function $\f$, we denote by \textrm{Im}($\f$) the image of $\f$.

\section{Preliminaries}

Given a function $\f:\R^{\n} \to \R$ and $x\in \R^{\n}$, we denote the level set going through $x$ as $ \mathcal{L}_{\f,x} = \acco{ y\in \R^{\n}, \f(y) = \f(x)}$ 
 and the sublevel set as
$ \sublevel_{\f,x} = \acco{ y\in \R^{\n}, \f(y) \leq \f(x) }$. 

If $\f$ is \si\ with respect to $x^\star$, then the function $x \mapsto \f(x+x^{\star}) - \f(x^{\star})$ is scaling invariant with respect to $0$. Hence, if a function \f\ is \si, we assume in the following that $\f$ is \si\ with respect to the reference point $0$ and that $f(0)=0$, without loss of generality.

We can immediately imply from \eqref{scaling-invariance-definition} that if $x$ and $y$ belong to the same level set, then $\rho x$ and $\rho y$ belong to the same level set.  Hence the level set of $x$ and $\rho x$ are scaled from one another, \ie, $\mathcal{L}_{\f,\rho x} = \rho \mathcal{L}_{\f,x} $.

Similarly, since for any $x, y\in \R^{\n}$ and $\rho > 0$, $\f( \frac{y}{\rho} ) \leq \f(x)$ if and only if $ \f(y ) \leq \f(\rho x)$, 
\begin{align}
\sublevel_{f,\rho x} = \rho \sublevel_{f,x} \text{\del{ ,} and }  \mathcal{L}_{\f, \rho x} = \rho \mathcal{L}_{\f, x}\enspace.
\label{homothetic}
\end{align}
These properties are visualized in Figure~\ref{fig:scalingInvariance}.

\begin{figure*}
\newcommand{\figwidth}{0.24}
\centering
        \includegraphics[height=\figwidth\textwidth]{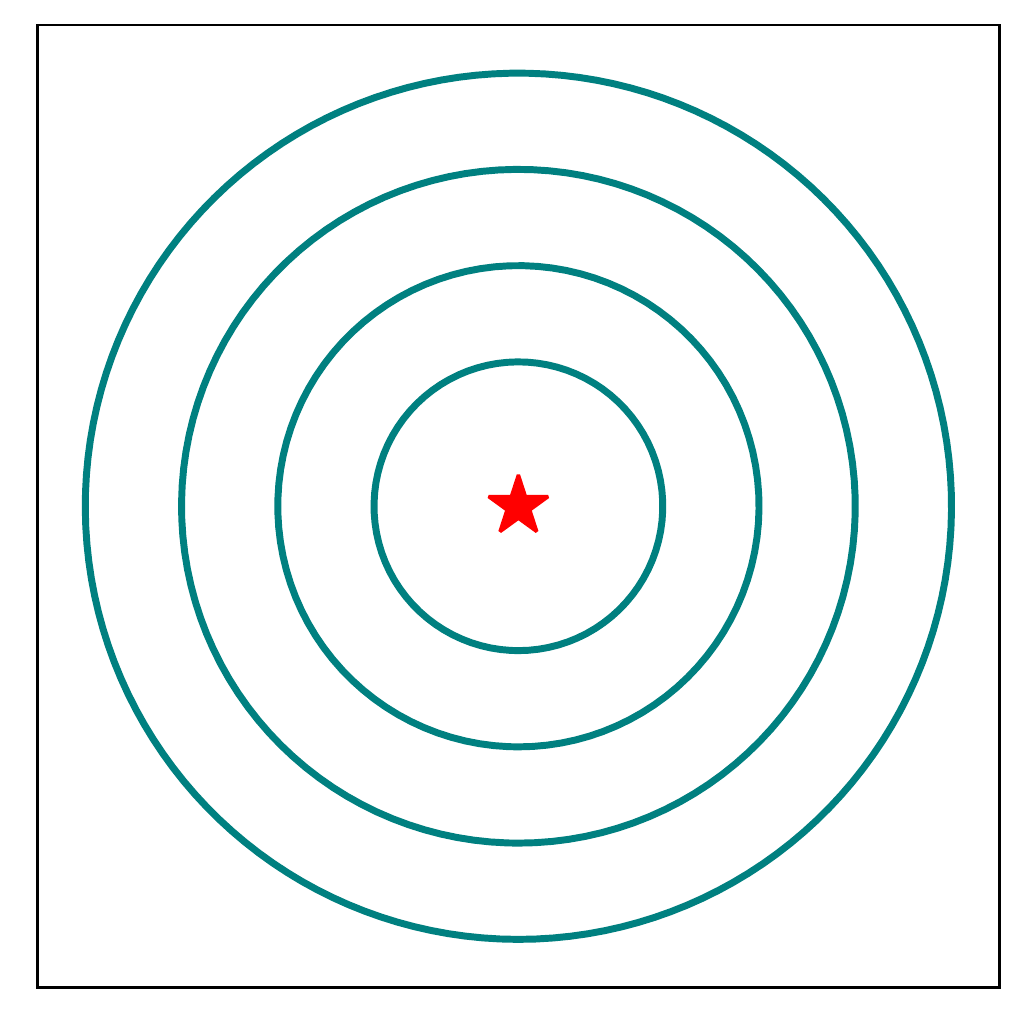}
        \includegraphics[height=\figwidth\textwidth]{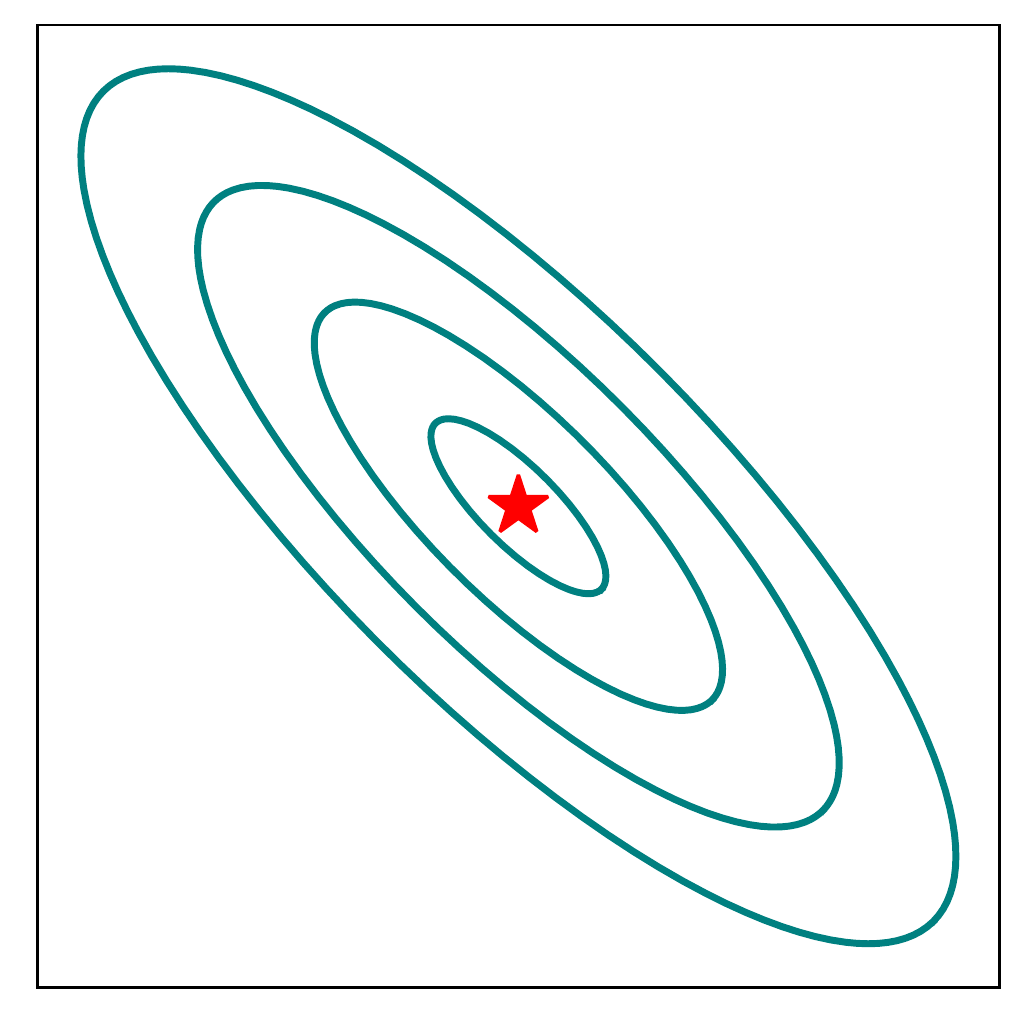}
        \includegraphics[height=\figwidth\textwidth]{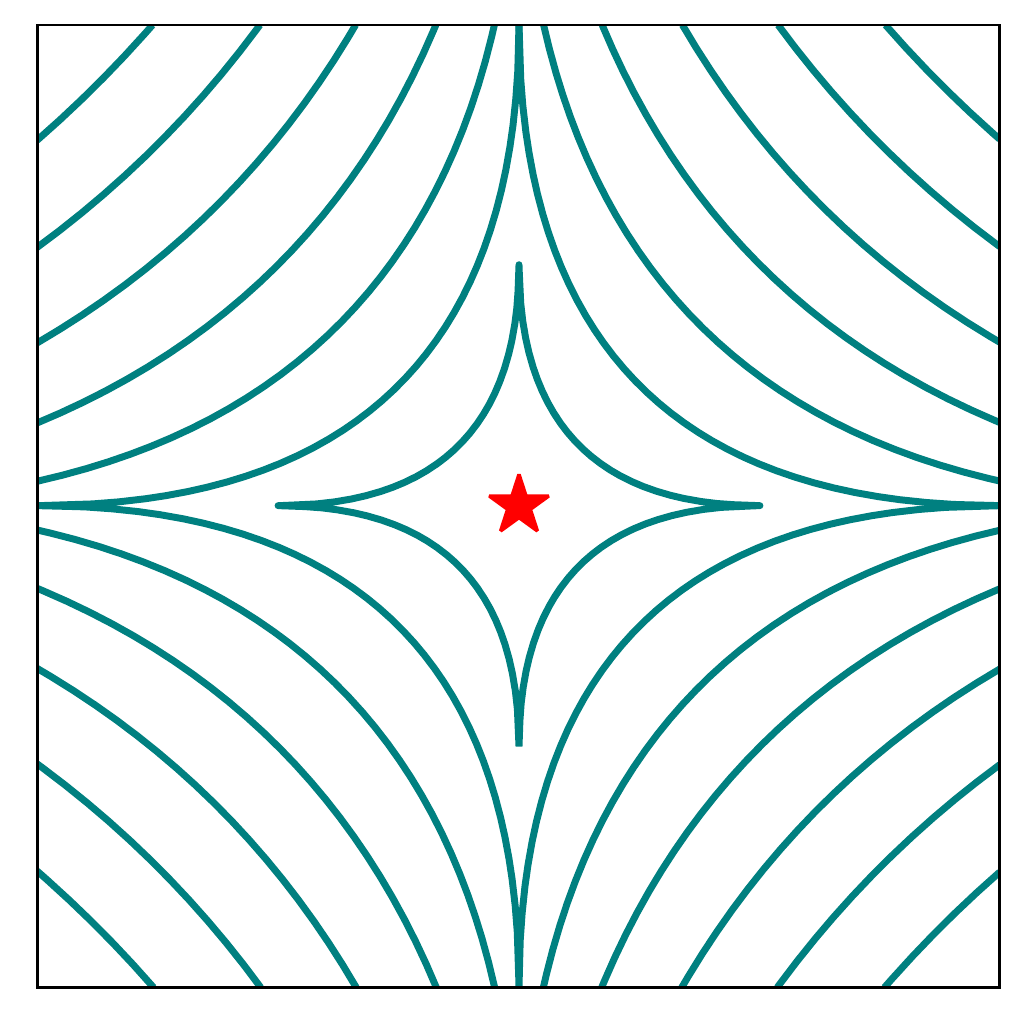}
        \includegraphics[height=\figwidth\textwidth]{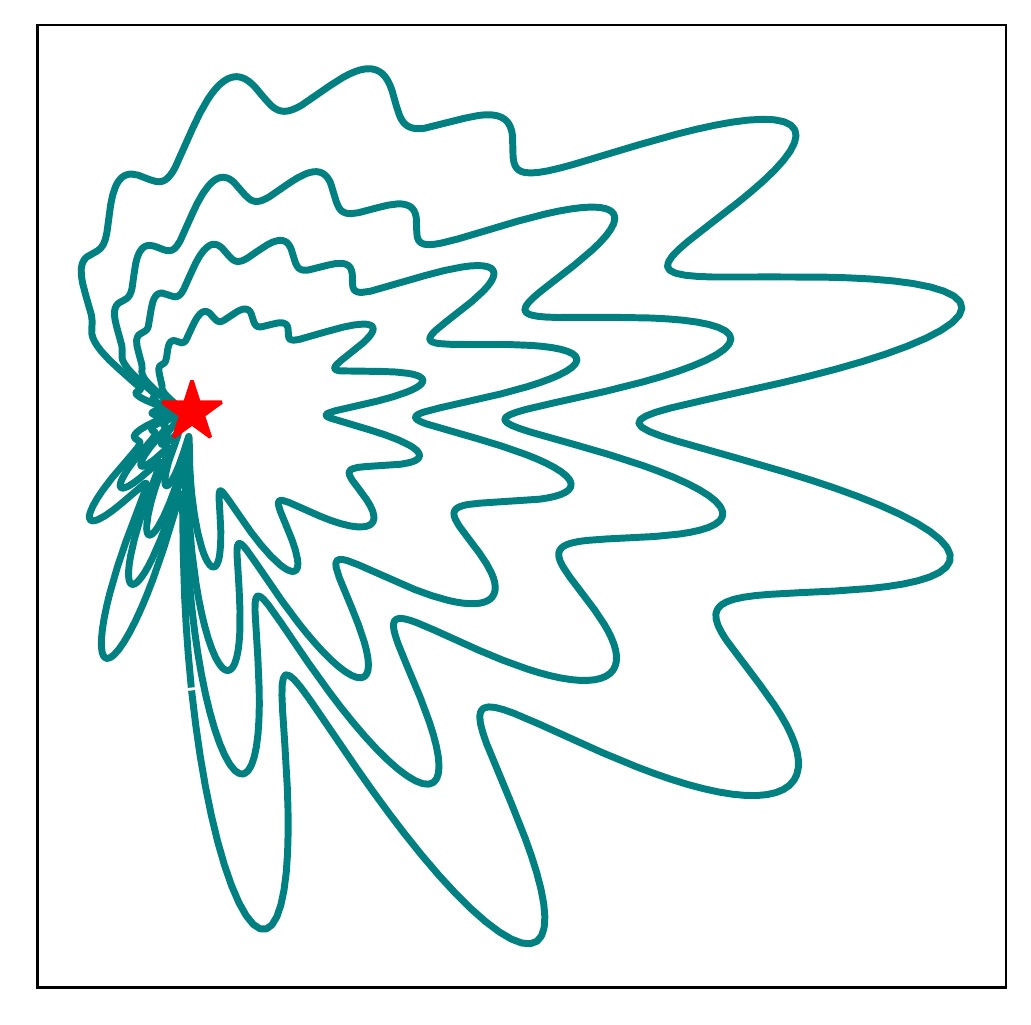}
        \caption{\label{fig:scalingInvariance}
         Level sets of \si\ functions with respect to the red star $x^{\star}$. The four functions are strictly increasing transformations of $x \mapsto p(x - x^{\star})$ where $p$ is a \ph\ function. From left to right: $p(x) = \|x\| ;$ $p(x) = x^\top A x$
for $A$ symmetric positive and definite; $p(x) = \pare{\sum_{i} \sqrt{\abs{x_{i}}} }^{2}$ the $\frac12$-norm; a randomly generated \si\ function from a ``smoothly'' randomly perturbed sphere function. The two first functions from the left have convex sublevel sets, contrary to the last two.
    }
\end{figure*}

Given an \si\ function $\f$, we define surjective restrictions of $\f$ to half-lines along a vector $x \in \R^\n$ as
\begin{equation}\label{eq:def-fx}
\f_x:  t \in [0,\infty)\mapsto f(tx)
\enspace.
\end{equation}
It is immediate to see that the $\f_x$ are also \si\footnote{
This directly follows because for $s, t \in \R_{+}$ and $\rho > 0$,
$ \f_{x}(t) \leq \f_{x}(s) \iff \f(tx) \leq \f(sx) \iff \f(\rho t x) \leq \f(\rho s x) \iff \f_{x}(\rho t) \leq \f_{x}(\rho s)
$.}.
 However, \f\ may not be \si\ even when all $\f_x$ are\footnote{
 For example, define $f: \R\to\R$ as $t \mapsto t$ on $\R_{+}$ and $t \mapsto t^{2}$ on $\R_{-}$. Then $\f_1(t) = t$ and $\f_{-1}(t) = t^2$, for $t\in\R_+$, are both \si\ and even \ph\ with degree 1 and 2, respectively.
But \f\ is not \si, and hence also not \ph, because $\f(\frac12) = \frac 12  > \frac 14 = \f(-\frac12)$ but $\f(4 \times \frac12) = 2 < 4 = \f(4 \times (-\frac12))$.}.

\newcommand{\strict}{isolated}
\newcommand{\astrict}{an isolated}
Scaling invariant functions have at most one \strict\ local optimum \cite{auger2016linear} where \astrict\ local optimum, say, an isolated argmin, $x$, for a function $g: \R^{\n} \to \R$ is defined in that there exists $\epsilon > 0$ such that for all $y \in \B\pare{x, \epsilon}\setminus\acco{x}$, $g(y) > g(x)$. This result is reminded in the following proposition.

\begin{proposition}[see {\cite[Proposition 3.2]{auger2016linear}}]
Let $f$ be an \si\ function. Then $f$ can admit \astrict\ local optimum only in $f(0)=0$ and this local optimum is also the global optimum. In addition, the functions $f_x$ cannot admit a local plateau, \ie, a ball where the function is locally constant, unless the function is equal to $0$ everywhere.
\label{notplateau}
\end{proposition}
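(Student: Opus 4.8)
The statement splits into two independent claims, and the plan is to prove them separately. First I would record the trivial reduction that $-\f$ is again \si\ (the equivalence \eqref{scaling-invariance-definition} is unchanged when $\f$ is replaced by $-\f$), so that an isolated local maximum of $\f$ is an isolated local minimum of $-\f$; it therefore suffices to rule out isolated local minima away from $0$, and to show that such a minimum, if located at $0$, is global. The only machinery I expect to need is the homothety relation \eqref{homothetic}, i.e.\ $\sublevel_{\f,\rho x}=\rho\,\sublevel_{\f,x}$, together with the already-noted scaling invariance of the restrictions $\f_x$.

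For the first claim, the plan is to turn ``$x_0$ is an isolated local minimum'' into the set identity $\sublevel_{\f,x_0}\cap \B\pare{x_0,\epsilon}=\{x_0\}$ for some $\epsilon>0$, and then to scale it by an arbitrary $\rho>0$: using $\rho\,\sublevel_{\f,x_0}=\sublevel_{\f,\rho x_0}$ and $\rho\,\B\pare{x_0,\epsilon}=\B\pare{\rho x_0,\rho\epsilon}$, this gives $\sublevel_{\f,\rho x_0}\cap\B\pare{\rho x_0,\rho\epsilon}=\{\rho x_0\}$, so $\rho x_0$ is an isolated local minimum as well, for every $\rho>0$. Now if $x_0\neq 0$ I would choose $\rho$ close to, but different from, $1$: then $\rho x_0\neq x_0$, while $\|x_0-\rho x_0\|=|1-\rho|\,\|x_0\|$ is less than both $\epsilon$ and $\rho\epsilon$, so $\rho x_0\in\B\pare{x_0,\epsilon}\setminus\{x_0\}$ and $x_0\in\B\pare{\rho x_0,\rho\epsilon}\setminus\{\rho x_0\}$. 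The first membership forces $\f(\rho x_0)>\f(x_0)$ and the second forces $\f(x_0)>\f(\rho x_0)$, a contradiction, so $x_0=0$. For the global-optimum part, assuming $0$ is an isolated local minimum (so $\f(y)>\f(0)=0$ on some $\B_\epsilon\setminus\{0\}$) and given an arbitrary $z\neq 0$, I would scale $z$ into $\B_\epsilon$ by a small $\rho>0$, so $\f(\rho z)>\f(0)$, and then read \eqref{scaling-invariance-definition} in reverse with $x^\star=0$, $x=z$, $y=0$ to conclude $\f(z)>\f(0)=0$.

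For the second claim, suppose $\f_x$ is constant on a nontrivial interval $(a,b)\subset[0,\infty)$. Picking $t_1<t_2$ in $(a,b)$ gives $\f_x(t_1)=\f_x(t_2)$, hence both $\f_x(t_1)\le\f_x(t_2)$ and $\f_x(t_2)\le\f_x(t_1)$; since $\f_x$ is \si\ on the half-line, both inequalities survive multiplying the arguments by any $\rho>0$, and choosing $\rho=s/t_1$ yields $\f_x(s)=\f_x(qs)$ with $q=t_2/t_1>1$, for every $s>0$. As $t_1,t_2$ range over $(a,b)$ the ratio $q$ covers an interval $(1,1+\delta)$; composing the resulting identities (and using them in reverse) upgrades this to $\f_x(s)=\f_x(rs)$ for all $r>0$ and all $s>0$, i.e.\ $\f_x$ is constant on $(0,\infty)$. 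Combined with $\f_x(0)=\f(0)=0$, this forces $\f_x\equiv 0$, as claimed.

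The genuinely delicate points are few. In the first claim one must quantify exactly how close $\rho$ has to be to $1$ for the two punctured balls to each contain the other's centre; this is precisely where $x_0\neq 0$ is used, since otherwise $\rho x_0=x_0$ and nothing happens. In the second claim the step I would treat most carefully is the propagation from a small band of admissible ratios to all of $(0,\infty)$, and the behaviour at the endpoint $t=0$, where the normalisation $\f_x(0)=\f(0)=0$ fixes the plateau value; everything else is routine manipulation of \eqref{homothetic}.
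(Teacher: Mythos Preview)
The paper does not supply its own proof of this proposition; it is quoted from \cite[Proposition~3.2]{auger2016linear} without argument. So there is nothing in the paper to compare against, and I assess your proposal on its merits.

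Your treatment of the first claim is correct. Rewriting ``$x_0$ is an isolated argmin'' as $\sublevel_{\f,x_0}\cap\B\pare{x_0,\epsilon}=\{x_0\}$ and scaling via \eqref{homothetic} is exactly the right move; choosing $\rho$ close to $1$ then yields the two incompatible strict inequalities when $x_0\neq 0$, and the globality step (scale $z$ into $\B_\epsilon$ and read \eqref{scaling-invariance-definition} contrapositively) is clean.

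Your treatment of the second claim is correct up to, but not including, the last sentence. From a plateau on $(a,b)$ you correctly deduce $\f_x(s)=\f_x(qs)$ for all $s>0$ and all $q$ in an interval $(1,1+\delta)$, and iterating does propagate this to all $r>0$, so $\f_x$ is constant on $(0,\infty)$. The gap is the final inference: ``combined with $\f_x(0)=\f(0)=0$, this forces $\f_x\equiv 0$''. Nothing in your argument ties the constant value on $(0,\infty)$ to the value at $0$. Concretely, the function $\f(y)=1$ for $y\neq 0$ and $\f(0)=0$ satisfies \eqref{scaling-invariance-definition}, every $\f_x$ (for $x\neq 0$) is identically $1$ on $(0,\infty)$ and hence has a local plateau, yet $\f_x\not\equiv 0$. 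Scaling-invariance by itself never relates $t>0$ to $t=0$; one needs continuity of $\f_x$ at $0$ (as assumed, for instance, in Proposition~\ref{monotone-monotone} of the paper) to pass to the limit and force the plateau value to equal $\f_x(0)$. You even flag this endpoint behaviour as the delicate point, but the resolution you offer---that the normalisation ``fixes the plateau value''---is precisely the step that fails. Without an added hypothesis the honest conclusion of your argument is only that $\f_x$ is constant on $(0,\infty)$.
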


We characterize in the following the functions $\f_x$ of an \si\ function \f\ under different conditions.
\begin{proposition}
If $f$ is a continuous \si\ function on $\R^{\n}$, then for all $x \in \R^n$, $f_x$ is either constant equal to $0$ or strictly monotonic.

More specifically, if $\varphi :\R_+ \to \R$ is a 1-dimensional continuous \si\ function, then $\varphi$ is either constant equal to $0$ or strictly monotonic.

 \label{continuity-monotone}
 \end{proposition}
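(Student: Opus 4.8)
The plan is to reduce the $n$-dimensional statement to the one-dimensional one. Indeed, fix $x \in \R^{\n}$; by the footnote following~\eqref{eq:def-fx}, $\f_x$ is a $1$-dimensional \si\ function on $\R_+$, and it is continuous because $\f$ is continuous. So it suffices to prove the second assertion: a continuous \si\ function $\varphi: \R_+ \to \R$ is either constant equal to $0$ or strictly monotonic. I will assume from now on that $\varphi$ is not constant equal to $0$ and show it must be strictly monotonic.

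First I would record the basic structural consequence of scaling-invariance in dimension one. Writing $\varphi(1) = c$, the defining property~\eqref{scaling-invariance-definition} applied with reference point $0$ says that the order between $\varphi(t)$ and $\varphi(s)$ is preserved under simultaneous scaling. Taking $s = 1$, for every $t > 0$ and every $\rho > 0$ we get $\varphi(t) \le c \iff \varphi(\rho t) \le \rho c \cdot \tfrac{1}{\rho}$\,---\,more usefully, comparing $t$ with $\rho$ directly: for all $t, \rho > 0$, $\varphi(\rho) \le \varphi(t) \iff \varphi(1) \le \varphi(t/\rho)$. Equivalently, the sign of $\varphi(t) - \varphi(1)$ as a function of $t$ determines everything: if $A = \{t > 0 : \varphi(t) < \varphi(1)\}$, $B = \{t > 0 : \varphi(t) > \varphi(1)\}$, and $E = \{t > 0 : \varphi(t) = \varphi(1)\}$, then scaling-invariance forces each of these sets to be scale-stable in a comparative sense; in particular $E$ is stable under multiplication and division (if $\varphi(t) = \varphi(s)$ then $\varphi(\rho t) = \varphi(\rho s)$, so $\varphi(t/s) = \varphi(1)$), hence $E$ is a multiplicative subgroup-like set of $\R_{>0}$ containing $1$. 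By continuity $E$ is closed in $\R_{>0}$.

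The key step is to show $E = \{1\}$. Suppose not; then $E$ contains some $t_0 \ne 1$, and since $E$ is closed under multiplication and inversion it contains $t_0^{k}$ for all $k \in \Z$. If $t_0 > 1$ these accumulate at $0$ and at $+\infty$; either way $\log E$ is a nontrivial additive subset of $\R$ closed under addition and negation, so its closure is either all of $\R$ or a discrete group $d\Z$. If $\overline{\log E} = \R$ then $E$ is dense in $\R_{>0}$, and continuity forces $\varphi \equiv \varphi(1)$ on $\R_{>0}$, hence (continuity at $0$, and $\varphi$ \si\ so $\varphi(0)=0$) $\varphi(1)$ would have to equal... here I would use Proposition~\ref{notplateau}: a constant $\varphi_x$ forces $\f$, hence $\varphi$, to be $0$ everywhere, contradicting our assumption. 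If instead $E = \{t_0^{k} : k \in \Z\}$ is discrete, pick the generator $t_0 > 1$; on the compact interval $[1, t_0]$, $\varphi$ is continuous with $\varphi(1) = \varphi(t_0)$ but $\varphi \ne \varphi(1)$ somewhere inside, so $\varphi$ attains a strict interior max or min on $(1, t_0)$, i.e.\ a point $t_* \in (1,t_0)$ with $\varphi(t_*) > \varphi(1) = \varphi(t_0)$ (or the reversed inequality). But then $t_*$ is an isolated local optimum of $\varphi = \varphi_1$ in the one-dimensional sense, at a point $\ne 0$, contradicting Proposition~\ref{notplateau} (which says the only possible isolated local optimum is at $0$, and also rules out local plateaus unless $\varphi \equiv 0$). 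Hence $E = \{1\}$.

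Finally, from $E = \{1\}$ I would conclude strict monotonicity. For every $t \ne 1$ we have either $\varphi(t) < \varphi(1)$ or $\varphi(t) > \varphi(1)$, and by continuity of $\varphi$ on the connected sets $(0,1)$ and $(1,\infty)$ together with the intermediate value theorem, the sign of $\varphi(t) - \varphi(1)$ is constant on $(0,1)$ and constant on $(1,\infty)$. Say $\varphi(t) > \varphi(1)$ for $t > 1$ (the other sign choices are handled symmetrically, and the two intervals must have opposite signs by a scaling argument: if $\varphi(t) > \varphi(1)$ for some $t>1$ then applying scaling-invariance with $\rho = 1/t$ gives $\varphi(1) > \varphi(1/t)$, so $1/t \in (0,1)$ has $\varphi < \varphi(1)$). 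Now take any $0 < a < b$. Scaling-invariance gives $\varphi(a) \le \varphi(b) \iff \varphi(1) \le \varphi(b/a)$, and since $b/a > 1$ we have $\varphi(b/a) > \varphi(1)$, hence $\varphi(a) < \varphi(b)$. Thus $\varphi$ is strictly increasing (and strictly decreasing in the mirror case). This proves the second assertion, and the first follows by applying it to each $\f_x$.

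The step I expect to be the main obstacle is ruling out a discrete nontrivial equality-set $E$: it is where one genuinely needs Proposition~\ref{notplateau} (via the existence of a strict interior extremum on $[1,t_0]$) rather than just elementary manipulations, and one must be careful that the extremum produced is indeed \emph{isolated} — continuity alone gives an extremum but the no-plateau part of Proposition~\ref{notplateau} is what upgrades it to a contradiction.
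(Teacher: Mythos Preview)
Your reduction to the one-dimensional statement and the observation that $E = \{t > 0 : \varphi(t) = \varphi(1)\}$ is a closed multiplicative subgroup of $\R_{>0}$ are correct, as is the final step deriving strict monotonicity from $E = \{1\}$. The genuine gap is in ruling out the discrete case $E = \{t_0^k : k \in \Z\}$. You obtain an interior maximum (or minimum) $t_* \in (1, t_0)$ of $\varphi$ on $[1, t_0]$, but this need not be an \emph{isolated} local optimum in the sense required by Proposition~\ref{notplateau}: the set $\{t \in [1, t_0] : \varphi(t) = \varphi(t_*)\}$ is closed and, by the no-plateau clause, has empty interior, yet it could still be, say, a Cantor-type set in which no point is isolated. ``No plateau'' does not upgrade an arbitrary extremum to an isolated one, so Proposition~\ref{notplateau} does not apply and your contradiction does not go through.

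The fix is immediate and renders the dense/discrete split unnecessary: as soon as $E \neq \{1\}$, it contains some $t_0 \in (0,1)$ and hence the sequence $t_0^{k} \to 0$ with $\varphi(t_0^{k}) = \varphi(1)$ for all $k \geq 1$; continuity at $0$ then gives $\varphi(0) = \varphi(1)$, and scaling-invariance (scale the pair $0,1$ by any $\rho > 0$) yields $\varphi(0) = \varphi(\rho)$ for every $\rho$, so $\varphi \equiv \varphi(0) = 0$. This is exactly the paper's proof, which reaches the same point in a few lines without the subgroup machinery: it starts from ``not strictly monotonic $\Rightarrow$ not injective'' (since a continuous injective real function is strictly monotonic), picks $0 < s < t$ with $\varphi(s) = \varphi(t)$, deduces $\varphi((s/t)^k) = \varphi(1)$ for all $k$, lets $k \to \infty$, and concludes as above. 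Your closed-subgroup classification is correct but superfluous once continuity at $0$ is exploited; you even noted the accumulation at $0$ in passing but did not use it.
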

 \begin{proof}
Assume that $\varphi$ is not strictly monotonic on $[0,\infty)$. Then $\varphi$ is not strictly monotonic on $(0, \infty)$. Since continuous injective functions are strictly monotonic, $\varphi$ is not injective on $(0, \infty)$. Therefore there exists $0 < s < t$ such that $\varphi(s) = \varphi(t)$. By scaling-invariance, it follows that $\varphi(\frac{s}{t}) = \varphi(1)$. It follows iteratively that for all integer $k > 0$, $\varphi\pare{ (\frac{s}{t})^{k} } = \varphi(1)$. Taking the limit for $k \to \infty$, we obtain that $\varphi(0) = \varphi(1)$. Thereby by scaling-invariance again, it follows for all $\rho > 0$ that $\varphi(0) = \varphi(\rho)$. Hence we have shown that if $\varphi$ is not strictly monotonic, it is a constant function.

Now if $\f$ is a continuous \si\ function on $\R^{\n}$ and $x \in \R^{\n}$, then $\f_x$ is also scaling invariant and continuous. Then it follows that $\f_{x}$ is either constant or strictly monotonic.
\qed
\end{proof}

We deduce from Proposition~\ref{continuity-monotone} the next corollary.
\begin{corollary}
Let $\f$ be a continuous \si\ function. If $\f$ has a local optimum at $x$, then for all $t \geq 0$, $\f(tx) = \f(0)$.
In particular, if $f$ has a global argmin (resp. argmax), then $0$ is a global argmin (resp. argmax).
\label{localGlobal}
\end{corollary}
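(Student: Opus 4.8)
The plan is to reduce everything to the one–dimensional restriction $f_x$ and apply Proposition~\ref{continuity-monotone}. If $x=0$ the assertion $f(tx)=f(0)$ holds trivially, so assume $x\neq 0$. Since $f$ is continuous and \si, the map $f_x: t\in[0,\infty)\mapsto f(tx)$ is a one–dimensional continuous \si\ function, hence by Proposition~\ref{continuity-monotone} it is either constant equal to $0$ or strictly monotonic. The whole proof then consists in ruling out the strictly monotonic alternative whenever $f$ has a local optimum at $x$.

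To exclude strict monotonicity, I would argue as follows. Suppose $x$ is a local argmin of $f$, so that $f(y)\geq f(x)$ for all $y\in\B\pare{x,\epsilon}$ for some $\epsilon>0$. Because $x\neq 0$, whenever $\abs{t-1}<\epsilon/\|x\|$ the point $tx$ lies in $\B\pare{x,\epsilon}$, so $f_x(t)=f(tx)\geq f(x)=f_x(1)$; thus $t=1$ is a local minimum of $f_x$ and it is an interior point of $[0,\infty)$. A strictly monotonic function on an interval has no interior local extremum, a contradiction. The case where $x$ is a local argmax is symmetric. Hence $f_x$ must be constant equal to $0$, and, recalling the normalization $f(0)=0$, this yields $f(tx)=0=f(0)$ for every $t\geq 0$, which is the first assertion.

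For the ``in particular'' part, if $f$ has a global argmin at some $x$, then $x$ is a fortiori a local argmin, so by the previous paragraph $f(0)=f(1\cdot x)=f(x)$; since $f(x)\leq f(y)$ for all $y\in\R^{\n}$, we get $f(0)\leq f(y)$ for all $y$, \ie, $0$ is a global argmin. The argmax case is identical, replacing the inequality by $f(x)\geq f(y)$.

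The argument is short, and I do not expect a serious obstacle: the only point that needs a little care is verifying that $t=1$ is genuinely an \emph{interior} local extremum of $f_x$ — this is exactly where the hypothesis $x\neq 0$ is used — so that Proposition~\ref{continuity-monotone} forces $f_x\equiv 0$; everything else is bookkeeping.
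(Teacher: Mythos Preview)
Your proof is correct and follows essentially the same approach as the paper: both reduce to observing that a local optimum of $f$ at $x$ forces $f_x$ to have a local optimum at $t=1$, hence $f_x$ cannot be strictly monotonic, and Proposition~\ref{continuity-monotone} then yields $f_x\equiv 0$. Your version is simply more detailed---you explicitly handle the case $x=0$, spell out why $t=1$ is an interior extremum, and write out the ``in particular'' clause---whereas the paper leaves these routine points implicit.
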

\begin{proof}
Assume that there exists a local optimum at $x$. Then $\f_{x}$ has a local optimum at $1$. Therefore $\f_{x}$ is not strictly monotonic, and thanks to Proposition~\ref{continuity-monotone}, $\f_{x}$ is necessarily a constant function. In other words, $\f(tx) = \f(0)$  for all $t \geq 0$. 
\qed \end{proof}

We derive another proposition with the same conclusions as Proposition~\ref{continuity-monotone} but under a different assumption.
We start by showing the following lemma.
\begin{lemma}
Let $\varphi :\R_+ \to \R$ be an \si\ function continuous at $0$ and strictly monotonic on a nontrivial interval $I \subset \R_{+}$, then $\varphi$ is strictly monotonic.
\label{lemma-monotone}
\end{lemma}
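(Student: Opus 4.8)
The plan is to extract from the hypotheses a nondegenerate compact interval on which $\varphi$ is strictly monotonic, propagate this to all of $(0,\infty)$ by scaling, and finally use continuity at $0$ to also cover the origin. As a first reduction, since $I$ is a nontrivial interval contained in $\R_+$, it contains a compact interval $[a,b]$ with $0<a<b$, on which $\varphi$ is strictly monotonic; I assume without loss of generality that $\varphi$ is strictly increasing on $[a,b]$ (otherwise replace $\varphi$ by $-\varphi$, which is again \si\ and continuous at $0$, and prove it strictly increasing on $\R_+$). The engine of the argument is that scaling-invariance carries strict monotonicity across scalings: if $\varphi$ is strictly increasing on a set $A\subset\R_+$, then for every $\rho>0$ it is strictly increasing on $\rho A$. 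Indeed, for $u<v$ in $\rho A$ one has $u/\rho<v/\rho$ in $A$, hence $\varphi(u/\rho)<\varphi(v/\rho)$, and applying scaling-invariance to the pairs $(u/\rho,v/\rho)$ and $(v/\rho,u/\rho)$ shows this is equivalent to $\varphi(u)<\varphi(v)$. In particular $\varphi$ is strictly increasing on $[\rho a,\rho b]$ for every $\rho>0$, and therefore on every compact interval $[p,q]$ with $0<p<q$ and $q/p\le b/a$, since then $[p,q]\subset[\rho a,\rho b]$ for any $\rho\in[q/b,\,p/a]$, an interval that is nonempty precisely because $q/p\le b/a$.

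Next I would prove $\varphi$ strictly increasing on $(0,\infty)$. Fix $0<x<y$. Choose a positive integer $N$ large enough that $(y/x)^{1/N}\le b/a$, which is possible because $(y/x)^{1/N}\to1<b/a$ as $N\to\infty$. Setting $x_i=x\,(y/x)^{i/N}$ for $i=0,\dots,N$, one has $x_0=x$, $x_N=y$, and $x_{i+1}/x_i=(y/x)^{1/N}\le b/a$, so by the previous step $\varphi$ is strictly increasing on each $[x_i,x_{i+1}]$. Since consecutive intervals share the endpoint $x_i$, an immediate induction — if $\varphi$ is strictly increasing on $[p,q]$ and on $[q,r]$ with $p\le q\le r$, it is strictly increasing on $[p,r]$ — yields that $\varphi$ is strictly increasing on $[x_0,x_N]=[x,y]$, so in particular $\varphi(x)<\varphi(y)$. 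Since $0<x<y$ were arbitrary, $\varphi$ is strictly increasing on $(0,\infty)$.

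Finally I would bring in the origin. Since $\varphi$ is strictly increasing on $(0,\infty)$, its right limit $L:=\lim_{t\to0^+}\varphi(t)=\inf_{t>0}\varphi(t)$ exists, and continuity of $\varphi$ at $0$ forces $L=\varphi(0)$, in particular $L$ is finite. Then for every $t>0$, choosing any $t'\in(0,t)$ gives $\varphi(0)=L\le\varphi(t')<\varphi(t)$, so combined with the previous paragraph $\varphi$ is strictly increasing on all of $\R_+$. The one genuinely nonroutine step is the passage from the local interval $[a,b]$ to all of $(0,\infty)$, handled by the geometric chaining above; and the hypothesis of continuity at $0$ is essential for the last step, since a scaling-invariant function strictly increasing on $(0,\infty)$ can take at $0$ a value larger than $\sup_{t>0}\varphi(t)$, and then fail to be monotonic on $\R_+$.
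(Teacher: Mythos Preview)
Your proof is correct and follows essentially the same strategy as the paper: use scaling-invariance to transport strict monotonicity from a small interval $[a,b]$ to intervals of ratio at most $b/a$, then chain such intervals geometrically to cover $(0,\infty)$, and finally invoke continuity at $0$. The only cosmetic difference is that the paper fixes once and for all the covering $\{[\rho^k,\rho^{k+1}]\}_{k\in\Z}$ with $\rho=b/a$, whereas you build a tailored geometric partition $x_i=x(y/x)^{i/N}$ for each pair $x<y$; both encode the same idea.
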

\begin{proof}
Assume without loss of generality that $\varphi$ is strictly increasing on $I$ and that $I = (a, b)$ with $0 < a < b$, up to replacing $I$ with a subset of $I$. Denote $\rho = \frac{b}{a}$. Then $\acco{\croc{\rho^{k}, \rho^{k+1}} }_{k \in \Z}$ covers $\pare{0, \infty}$. To prove that $\varphi$ is strictly increasing on $\pare{0, \infty}$, it is enough to prove that $\varphi$ is strictly increasing on $[ \rho^{k}, \rho^{k+1}]$ for all integer $k$.

Let $k$ be an integer and $(x, y)$ two real numbers such that $\rho^{k} \leq x < y \leq \rho^{k+1}$. Then $a \leq \frac{ax}{\rho^{k}} < \frac{ay}{\rho^{k}} \leq a \rho = b$. Therefore $\varphi(\frac{ax}{\rho^{k}}) < \varphi(\frac{ay}{\rho^{k}})$. And by scaling-invariance, $\varphi(x) < \varphi(y)$.

With the continuity at $0$, it follows that $\varphi$ is strictly increasing on $\R_{+}$. \qed
\end{proof}

We derive from Lemma~\ref{lemma-monotone} the following proposition.

\begin{proposition}
Let $f$ be an \si\ function continuous at $0$. Assume that each $\f_x$ is on some nontrivial interval either strictly monotonic or constant\del{ on some non-empty interval}. Then for all $x \in \R^{\n}$, $f_x$ is either constant equal to $0$ or strictly monotonic.
\label{monotone-monotone}
\end{proposition}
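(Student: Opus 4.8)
The plan is to fix an arbitrary $x \in \R^{\n}$ and reduce everything to the one-dimensional function $\varphi := \f_x$. Recall from the discussion after \eqref{eq:def-fx} that $\varphi$ is \si; moreover $\varphi$ is continuous at $0$ with $\varphi(0)=\f(0)=0$, since $\f$ is continuous at $0$ and $\varphi(t)=\f(tx)\to\f(0)=0$ as $t\downarrow 0$. By hypothesis there is a nontrivial interval on which $\varphi$ is either strictly monotonic or constant, and I would split into these two cases, showing that the first forces $\varphi$ strictly monotonic on all of $\R_+$ and the second forces $\varphi\equiv 0$.

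In the strictly monotonic case, $\varphi$ is strictly monotonic on a nontrivial interval $I\subset\R_+$. Since $\varphi$ is \si\ and continuous at $0$, this is exactly the situation of Lemma~\ref{lemma-monotone}, which I would invoke directly to conclude that $\varphi$ is strictly monotonic on $\R_+$.

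In the constant case, $\varphi$ is constant on a nontrivial interval, so in particular there exist $0<s<t$ with $\varphi(s)=\varphi(t)$ (a nontrivial interval contained in $\R_+$ always contains two distinct positive points). From here I would re-run the argument in the proof of Proposition~\ref{continuity-monotone}: scaling-invariance applied with $\rho=1/t$ gives $\varphi(s/t)=\varphi(1)$, and iterating with $\rho=s/t$ gives $\varphi\big((s/t)^{k}\big)=\varphi(1)$ for every integer $k>0$; letting $k\to\infty$ and using continuity at $0$ yields $\varphi(0)=\varphi(1)$, and then scaling-invariance applied to the pairs $(0,1)$ and $(1,0)$ gives $\varphi(\rho)=\varphi(0)=0$ for all $\rho>0$. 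Hence $\varphi$ is identically $0$. Putting the two cases together proves the claim for every $x$.

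The only subtlety — and the step I expect to require the most care — is that continuity is assumed only at $0$, so Proposition~\ref{continuity-monotone} cannot be applied as a black box. The added hypothesis must be used precisely to replace the ingredient that full continuity would otherwise provide: in the constant case it supplies the pair of distinct points with equal $\varphi$-value on which the limiting argument of Proposition~\ref{continuity-monotone} rests, and in the monotone case Lemma~\ref{lemma-monotone} supplies the propagation of strict monotonicity from a subinterval to all of $\R_+$.
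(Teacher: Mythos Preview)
Your proof is correct and is precisely the argument the paper has in mind: the paper does not spell out a proof of this proposition but only says it is ``derived from Lemma~\ref{lemma-monotone}'', and your two-case split---invoking Lemma~\ref{lemma-monotone} directly for the strictly monotonic case and re-running the scaling/limit argument of Proposition~\ref{continuity-monotone} for the constant case---is exactly the intended completion.
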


Note that the continuity of the function $\f_{x}$ alone does not suffice to conclude that $f_x$ is either constant or strictly monotonic on some nontrivial\del{-empty} interval. Indeed there exist 1-D continuous functions (even differentiable functions) that are not monotonic on any nontrivial interval~\cite{hardy1916weierstrass, denjoy1915fonctions, buskes1997topological}.

For the sake of completeness, we construct \si\ functions in $\R_{+}$ that are not monotonic on any nontrivial interval.
The construction of such functions is based on the nonlinear solutions of the Cauchy functional equation: for all $x, y \in \R$,
$
g(x+y) = g(x) + g(y)
$,
called Hamel functions~\cite{kuczma2009introduction}. A Hamel function $f$ also satisfies
$\f(q_{1}x + q_{2}y ) = q_{1}\f(x) + q_{2}\f(y)$
for all real numbers $x,y$ and rational numbers $q_{1}, q_{2}$~\cite[Chapter 2]{aczel1966lectures}. Since $g$ is nonlinear, there exist real numbers $x$ and $y$ such that the vectors $\{(x,g(x)), (y,g(y))\}$ form a basis of $\R^{2}$ over the field $\R$. Then the graph of $g$, which is a vector subspace of $\R^{2}$ over the field $\Q$, contains $\acco{ q_{1}\cdot(x,g(x)) + q_{2} \cdot (y,g(y)) ; (q_{1},q_{2}) \in \Q^{2}}$ which is dense in $\R^{2}$. Therefore a 1-D Hamel function is highly pathological, since its graph is dense in $\R^2$.

\begin{lemma}
There exist \si\ functions on $\R_{+}$ that are neither monotonic nor continuous on any nontrivial interval.
\label{lemma-uncountable}
\end{lemma}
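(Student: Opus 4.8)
The plan is to transport the multiplicative scaling on $(0,\infty)$ to additive translation on $\R$ via the logarithm, use a nonlinear solution $g$ of Cauchy's functional equation (a Hamel function) to obtain a scaling-invariant function with a dense graph, and then compose on the left with a bounded strictly increasing function so that the reference point $0$ fits in. Concretely: fix a Hamel function $g:\R\to\R$ (so $g(s+t)=g(s)+g(t)$ for all $s,t$, and $g$ is not linear) and a strictly increasing homeomorphism $h:\R\to(0,1)$, for instance $h(t)=(1+e^{-t})^{-1}$; then define $\varphi:\R_+\to\R$ by $\varphi(0)=0$ and $\varphi(x)=h(g(\ln x))$ for $x>0$. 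The first thing to check is that $\varphi$ is \si\ on $(0,\infty)$: for $x,y,\rho>0$, since $h$ is strictly increasing and $g$ is additive,
\[
\varphi(x)\le\varphi(y)\iff g(\ln x)\le g(\ln y)\iff g(\ln\rho)+g(\ln x)\le g(\ln\rho)+g(\ln y)\iff\varphi(\rho x)\le\varphi(\rho y)\,.
\]
(More conceptually: left-composition with a strictly increasing map preserves scaling invariance, and $g$ itself is ``order-translation-invariant'' on $\R$ precisely because it is additive.)

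Next I would take care of the reference point $0$. Since $h$ takes its values in $(0,1)$, we have $\varphi(x)>0=\varphi(0)$ for every $x>0$; hence whenever $x=0$ or $y=0$ both sides of \eqref{scaling-invariance-definition} are simultaneously true or simultaneously false, so \eqref{scaling-invariance-definition} holds trivially in those cases. Together with the display above, this shows $\varphi$ is \si\ on all of $\R_+$ (and $\varphi(0)=0$, consistent with the convention $f(0)=0$ used earlier).

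The remaining and main work is to establish the pathology on every nontrivial interval. The graph of $g$ is dense in $\R^2$ (as recalled just before the statement), so intersecting it with the open set $(c,d)\times\R$ shows that the graph of $g$ restricted to any nontrivial open interval $(c,d)$ is dense in $(c,d)\times\R$. The map $(x,y)\mapsto(\ln x,\,h^{-1}(y))$ is a homeomorphism from $(a,b)\times(0,1)$ onto $(\ln a,\ln b)\times\R$ that carries the graph of $\varphi$ on $(a,b)$ onto the graph of $g$ on $(\ln a,\ln b)$; therefore the graph of $\varphi$ is dense in $(a,b)\times(0,1)$ for every nontrivial interval $(a,b)\subset(0,\infty)$. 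A function whose graph is dense in such an open box is neither continuous nor monotonic on any subinterval: a continuous function---and likewise a monotonic one, since it has at most countably many discontinuities---has a point of continuity $x_0$, and near $x_0$ its graph is confined to an arbitrarily thin horizontal strip about $\varphi(x_0)\in(0,1)$, leaving a nonempty open sub-box of $(a,b)\times(0,1)$ untouched by the graph, which contradicts density. Finally, every nontrivial interval $I\subset\R_+$ contains a nontrivial open interval inside $(0,\infty)$, and restrictions of monotonic (resp. continuous) functions are again monotonic (resp. continuous); hence $\varphi$ is neither monotonic nor continuous on $I$ either.

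The step I expect to be the genuine obstacle is the one at the reference point. A nonzero $\Q$-linear map satisfies $g(ns_0)=n\,g(s_0)$, so $\mathrm{Im}(g)$ is unbounded both above and below; since for any fixed $y>0$ the set $\{\varphi(\rho y):\rho>0\}$ equals (the image of) $\mathrm{Im}(g)$, the naive candidate $x\mapsto g(\ln x)$ admits \emph{no} admissible value at $0$---no constant can sit $\le$ or $>$ a set unbounded on both sides. Composing with a bounded strictly increasing $h$ is precisely what turns $0$ into a legitimate global minimum while preserving both scaling invariance and the density of the graph; arranging this interplay correctly is the crux of the argument.
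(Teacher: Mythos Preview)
Your proof is correct and follows essentially the same route as the paper: conjugate a nonlinear additive (Hamel) function by the logarithm and left-compose with a strictly increasing map so that $f(0)=0$ becomes the global minimum and scaling-invariance extends to the reference point. The paper chooses $\exp$ (which yields the neat multiplicative identity $f(\rho x)=f(\rho)f(x)$) where you use a bounded $h$, and it cites the pathology of $g$ directly where you argue via density of the graph---these are cosmetic variations on the same construction.
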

\begin{proof}
We start by choosing a nonlinear solution of the Cauchy's functional equation denoted by $g : \R \to \R$, knowing that there are uncountably many ways to pick such a $g$~\cite{kuczma2009introduction}.
Then for all real numbers $a$ and $b$, $g(a+b) = g(a) + g(b)$. And since $g$ is not linear, we also know that g is neither continuous nor monotonic on any nontrivial interval~\cite{kuczma2009introduction}.
Let us define $\f = \exp\circ\, g \circ \log $ on $(0, \infty)$ and $\f(0) = 0$. Then $\f(x) > 0$ for all $x > 0$ and $\f$ is still not monotonic on any nontrivial interval.
We also have for all $\rho > 0$ and $x > 0$, 
$\f(\rho x) = \exp\pare{ g (\log(x) + \log(\rho)) } = \exp(g(\log(x))) \exp(g(\log(\rho))) = f(x) f(\rho)$.
This last result gives the scaling-invariance property.
\qed 
\end{proof}

Based on Lemma~\ref{lemma-uncountable}, we derive the next proposition.

\begin{proposition}
There exist $\si$ functions $\f$ on $\R^{\n}$ such that for all non-zero $x$, $\f_{x}$ is neither monotonic nor continuous on any nontrivial interval.
\end{proposition}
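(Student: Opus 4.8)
The plan is to lift the one-dimensional example from Lemma~\ref{lemma-uncountable} to $\R^{\n}$ by composing with a radial projection, so that $\f$ depends on $x\neq 0$ only through $\|x\|$ and the direction $x/\|x\|$, with the radial dependence governed by the pathological multiplicative Hamel-type function. Concretely, let $h:(0,\infty)\to(0,\infty)$ be the function $h = \exp\circ\, g\circ\log$ built in the proof of Lemma~\ref{lemma-uncountable} from a nonlinear solution $g$ of Cauchy's equation, so that $h(\rho x)=h(\rho)h(x)$ for all $\rho,x>0$ and $h$ is neither monotonic nor continuous on any nontrivial interval. Define $\f:\R^{\n}\to\R$ by $\f(0)=0$ and $\f(x) = h(\|x\|)$ for $x\neq 0$.

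First I would check scaling-invariance. For $x,y\in\R^{\n}$ and $\rho>0$, if either of $x,y$ is $0$ the equivalence in \eqref{scaling-invariance-definition} is trivial since $\f$ vanishes only at $0$ and is positive elsewhere; otherwise $\f(\rho x)\le \f(\rho y) \iff h(\rho\|x\|)\le h(\rho\|y\|) \iff h(\rho)h(\|x\|)\le h(\rho)h(\|y\|) \iff h(\|x\|)\le h(\|y\|)$, using $h(\rho)>0$. This is exactly \eqref{scaling-invariance-definition} with reference point $0$, so $\f$ is \si. Next, fix a non-zero $x$ and consider $\f_{x}(t) = \f(tx) = h(t\|x\|)$ for $t>0$ (and $\f_x(0)=0$). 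Up to the positive scaling of the argument by $\|x\|$, $\f_{x}$ is just $h$; since precomposition with multiplication by a fixed positive constant preserves the properties "monotonic on some nontrivial interval" and "continuous on some nontrivial interval", and $h$ has neither, $\f_{x}$ has neither on any nontrivial interval.

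The only genuinely delicate point is to argue cleanly that $h$, hence $\f_x$, is \emph{not} monotonic and \emph{not} continuous on any nontrivial interval. For monotonicity: if $h$ were monotonic on a nontrivial interval, then $g=\log\circ\, h\circ\exp$ would be monotonic on a nontrivial interval (as $\log$ and $\exp$ are strictly increasing homeomorphisms), contradicting the known fact, cited in Lemma~\ref{lemma-uncountable}, that a nonlinear additive $g$ is monotonic on no nontrivial interval. The same argument with "continuous" in place of "monotonic" handles continuity, since $g$ continuous on a nontrivial interval would force $g$ linear. These are exactly the facts already invoked in the proof of Lemma~\ref{lemma-uncountable}, so no new work is needed beyond transporting them through $\exp$ and $\log$.

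I expect the main (minor) obstacle to be the bookkeeping at the boundary point $t=0$ and, more subtly, whether one should worry about intervals of the form $[0,\varepsilon)$: on such an interval $\f_x$ takes the value $0$ at $0$ and positive values elsewhere, but restricting to $(0,\varepsilon)$ reduces to the already-settled case for $h$, so no interval—trivial or not, containing $0$ or not—is an exception. Assembling these observations gives the statement.

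\begin{proof}
Let $g:\R\to\R$ be a nonlinear solution of Cauchy's functional equation, and let $h = \exp\circ\, g\circ\log$ on $(0,\infty)$, as in the proof of Lemma~\ref{lemma-uncountable}; then $h(x)>0$ for all $x>0$, $h(\rho x)=h(\rho)h(x)$ for all $\rho,x>0$, and $h$ is neither monotonic nor continuous on any nontrivial interval (since otherwise $g = \log\circ\, h\circ\exp$ would be monotonic, respectively continuous, on a nontrivial interval, forcing $g$ to be linear). Define $\f:\R^{\n}\to\R$ by $\f(0)=0$ and $\f(x)=h(\|x\|)$ for $x\neq 0$.

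We first show $\f$ is \si\ with respect to $0$. Let $x,y\in\R^{\n}$ and $\rho>0$. If $x=0$ or $y=0$, the equivalence \eqref{scaling-invariance-definition} holds because $\f$ takes the value $0$ exactly at $0$ and strictly positive values elsewhere, so the comparison is unaffected by the scaling. If $x\neq0$ and $y\neq0$, then since $h(\rho\|x\|)=h(\rho)h(\|x\|)$, $h(\rho\|y\|)=h(\rho)h(\|y\|)$, and $h(\rho)>0$,
\[
\f(\rho x)\le \f(\rho y) \iff h(\rho)h(\|x\|)\le h(\rho)h(\|y\|) \iff h(\|x\|)\le h(\|y\|) \iff \f(x)\le \f(y)\enspace,
\]
which is \eqref{scaling-invariance-definition}.

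Now fix $x\in\R^{\n}$ with $x\neq 0$. For $t\ge 0$ we have $\f_{x}(t)=\f(tx)=h(t\|x\|)$ for $t>0$ and $\f_x(0)=0$. Suppose $\f_{x}$ were monotonic on some nontrivial interval $J\subset\R_{+}$. Then $\f_x$ is monotonic on $J\cap(0,\infty)$, which (since $J$ is nontrivial) is a nontrivial interval, and there the map $s\mapsto h(s)$, obtained from $\f_x$ by the strictly increasing change of variable $t\mapsto t\|x\|$, is monotonic on a nontrivial interval, contradicting the properties of $h$. The identical argument with "continuous" in place of "monotonic" shows $\f_x$ is not continuous on any nontrivial interval. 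Hence $\f_x$ is neither monotonic nor continuous on any nontrivial interval.
\qed
\end{proof}
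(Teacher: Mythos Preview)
Your proof is correct and takes essentially the same approach as the paper: both define $\f(x)=\varphi(\|x\|)$ with $\varphi$ the pathological one-dimensional \si\ function from Lemma~\ref{lemma-uncountable}, verify scaling-invariance via the multiplicative identity, and transfer the failure of monotonicity and continuity through the linear change of variable $t\mapsto t\|x\|$. Your write-up is slightly more detailed in handling the boundary point $0$ and in justifying why the nontrivial-interval properties survive the change of variable, but the argument is the same.
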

\begin{proof}
Based on Lemma~\ref{lemma-uncountable}, there exists $\varphi: \R_{+} \to \R$ \si\ on $\R_{+}$ which is neither monotonic nor continuous on any nontrivial interval. We construct $\f$ as follows. For all $x \in \R^{\n}$, $\f(x) = \varphi(\|x\|)$. Then $\f$ is \si\ because for $x, y \in \R^{\n}$ and for $\rho > 0$,
$\f(x) \leq \f(y) \iff \varphi(\|x\|) \leq \varphi(\|y\|) \iff \varphi(\rho\|x\|) \leq \varphi(\rho\|y\|) \iff \f(\rho x) \leq \f(\rho y)$. In addition for a non-zero $x$ and $t \geq 0$, $\f_{x}(t) = \f(tx) = \varphi(t\|x\|)$ and then $\f_{x}$ is neither monotonic nor continuous on any nontrivial interval. \qed
\end{proof}

Now assume that $\f$ is a continuous scaling invariant function and we can write $\f = \varphi \circ g$ where $\varphi$ is a continuous bijection and $g$ is a positively homogeneous function. As a direct consequence of the bijection theorem given in Appendix~\ref{sec:appendixA},\del{Theorem~\ref{homeomorphismTheorem}} $\varphi^{-1}$ is continuous if $\varphi$ is a continuous bijection defined on an interval. Therefore $g = \varphi^{-1}\circ \f$ is also continuous. This result is stated in the following corollary.
\begin{corollary}
Let $\f$ be a continuous \si\ function, $\varphi$ a continuous bijection defined on an interval in $\R$ and $p$ a positively homogeneous function such that $\f = \varphi \circ p$. Then $p$ is also continuous.
\label{continuous-positively}
\end{corollary}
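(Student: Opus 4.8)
The plan is to recover the continuity of $p$ from that of $f$ by left-inverting $\varphi$. The first step is to apply the bijection theorem of Appendix~\ref{sec:appendixA}: a continuous bijection $\varphi$ from a nontrivial interval $I \subseteq \R$ onto its image $J = \varphi(I)$ is strictly monotonic, $J$ is itself an interval, and the inverse map $\varphi^{-1}\colon J \to I$ is continuous. If $I$ is trivial there is nothing to prove, since then $p$ is constant and hence continuous; so we may assume $I$ nontrivial.

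The second step is to record the inclusions forced by the factorization $f = \varphi \circ p$. For every $x \in \R^{\n}$ the value $p(x)$ must lie in the domain $I$ of $\varphi$, so $\mathrm{Im}(p) \subseteq I$, and correspondingly $\mathrm{Im}(f) \subseteq J$. Since $\varphi$ is injective, $\varphi^{-1} \circ \varphi = \mathrm{id}_I$, and left-composing $f = \varphi \circ p$ with $\varphi^{-1}$ therefore gives
\begin{equation*}
\varphi^{-1} \circ f = \varphi^{-1} \circ \varphi \circ p = p
\end{equation*}
as an identity of functions on $\R^{\n}$; this manipulation uses only $\mathrm{Im}(p) \subseteq I$ and the injectivity of $\varphi$.

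The final step is the composition argument: $p = \varphi^{-1} \circ f$ writes $p$ as the composite of the continuous map $f\colon \R^{\n} \to J$ (continuous by hypothesis) with the continuous map $\varphi^{-1}\colon J \to I$ (continuous by the first step), hence $p$ is continuous. I expect no genuine obstacle here — the whole argument rests on the single one-dimensional fact that the inverse of a continuous injection on an interval is continuous, which is precisely the content of the bijection theorem cited in the first step, and the remaining steps are purely formal. The only point requiring a moment's care is the degenerate situation in which $\mathrm{Im}(f)$ (equivalently $\mathrm{Im}(p)$) reduces to a single point, which is dispatched at the outset by observing that $p$ is then constant.
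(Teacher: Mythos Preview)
Your proof is correct and follows the same route as the paper: invoke the bijection theorem (Appendix~\ref{sec:appendixA}) to obtain that $\varphi^{-1}$ is continuous, then write $p=\varphi^{-1}\circ f$ and conclude by composition of continuous maps. The paper's argument is the one-line version of exactly this, given in the paragraph immediately preceding the corollary; your added bookkeeping (the trivial interval case and the explicit inclusion $\mathrm{Im}(p)\subseteq I$) is sound but not needed for the level of detail the paper gives.
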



\section{Scaling-invariant Functions as Composite of Strictly Monotonic Functions with Positively Homogeneous Functions}
\label{transformations}

As underlined in the introduction, compositions of strictly monotonic functions with positively homogeneous functions are scaling-invariant (\si) functions. We investigate in this section under which conditions the converse is true, that is, when \si\ functions are compositions of strictly monotonic functions with \ph\ functions.
Section~\ref{continuous-transformations} shows that continuity is a sufficient condition, whereas Section~\ref{sec:iff} gives some necessary \emph{and} sufficient condition on \f\ to be decomposable in this way.

\subsection{Continuous \si\ Functions}
\label{continuous-transformations}

We prove in this section a main result of the paper: any continuous \si\ function $\f$ can be written as $\f = \varphi \circ p$ where $p$ is \phone\ and $\varphi$ is a homeomorphism (and in particular strictly monotonically increasing and continuous).
The proof relies on the following proposition where we do not assume yet that $\f$ is continuous but only the restrictions of $\f$ to the half-lines originating in $0$, the $\f_x$ functions. 
\begin{proposition}
Let $\f$ be an {\rm \si} function such that for any $x\in\mathbb{R}^n$, $f_x$ as defined in \eqref{eq:def-fx} is continuous and strictly monotonic or constant\del{either constant or strictly monotonic and continuous}. Then for all $\alpha > 0$, there exist a {\rm \phalpha} function $p$ and a strictly increasing, continuous bijection (thus a homeomorphism) $\varphi$ such that
$\f = \varphi \circ p$.
For a non-zero $\f$ and $\alpha > 0$, the choice of $(\varphi, p)$ is unique up to a left composition of $p$ with a piece-wise linear function.

\begin{itemize}
\item[{\rm (i)}] In addition, if all non-constant $\f_{x}$ have the same monotonicity for all $x \in \R^{\n}$, then for any $x_{0}\in \R^{\n}$ such that $\f(x_{0}) \neq 0$, the homeomorphism $\varphi$ corresponding to a {\rm \phone} function can be chosen as
$\f_{x_{0}}$ and is at least as smooth as $\f$.

\item[{\rm (ii)}] Otherwise, there exist $x_{1}, x_{-1} \in \R^{\n}$ such that $\f_{x_{1}}$ is strictly increasing and $\f_{x_{-1}}$ is strictly decreasing. And for any such $x_{1}$ and $x_{-1}$ we can choose as homeomorphism $\varphi$ corresponding to a {\rm \phone} function the\del{ following} function\del{ equal to} $\f_{x_{1}}$ on $\R_{+}$ and\del{ equal to} $t \mapsto \f_{x_{-1}}(-t)$ on $\R_{-}$.
\end{itemize}

\label{DarbouxGeneral}
\end{proposition}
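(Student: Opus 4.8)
The plan is to build the positively homogeneous function $p$ explicitly by choosing, on each half-line $\mathbb{R}_+ x$, a parametrization that makes $p$ homogeneous of degree $\alpha$, and then to verify that the induced map $\varphi$ relating $p$-values to $f$-values is a well-defined strictly increasing continuous bijection. Concretely, first observe that for $x\neq 0$ each $f_x$ is either identically $0$ or strictly monotonic and continuous; in the latter case $f_x$ is a homeomorphism from $[0,\infty)$ onto an interval. The natural candidate is $p(x)=\|x\|^\alpha$ on directions where $f_x\equiv 0$, and on directions where $f_x$ is non-constant, $p(tx)=t^\alpha p(x)$ with $p(x)$ chosen so that the value of $f$ is recovered by a single universal function $\varphi$. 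For case (i), where all non-constant $f_x$ share the same monotonicity (say increasing), fix $x_0$ with $f(x_0)\neq 0$ and set $\varphi=f_{x_0}$; then for any $x$ with $f_x$ non-constant, scaling-invariance forces $f_x$ and $f_{x_0}$ to have comparable level structure, so there is a unique $t(x)>0$ with $f_x(1)=f_{x_0}(t(x))$, i.e.\ $f(x)=\varphi(t(x))$; define $p(x)=t(x)^\alpha$ when normalized appropriately (and $p(x)=0$, hence first extended by $\|x\|^\alpha$-scaling on the zero-directions — here one must be a little careful and instead send the zero set of $f$ to $\varphi^{-1}(0)$). Homogeneity of degree $\alpha$ then follows from \eqref{homothetic} and the definition, and $\varphi\circ p=f$ by construction.

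The key steps, in order: (1) use Proposition~\ref{continuity-monotone}/the hypothesis to classify each $f_x$ as zero or a strictly monotonic homeomorphism onto its image; (2) show that scaling-invariance implies that for two directions $x,y$ with $f_x,f_y$ non-constant and of the same monotonicity, the map $f_x(1)\mapsto f_y(1)$ extends to an order isomorphism of the relevant value-intervals — this is what lets a \emph{single} $\varphi$ work across all directions; (3) in case (i), define $t(x)$ via $f(x)=f_{x_0}(t(x))$ on non-zero-value directions, set $p(x)=t(x)^\alpha$ there and extend to the zero-set of $f$ and to $tx$ by $p(tx)=t^\alpha p(x)$, checking consistency at $t=0$ using continuity of $f_x$ at $0$ and $f(0)=0$; (4) verify $p$ is \phalpha\ and $\varphi=f_{x_0}$ (raised to the power accounting for $\alpha$, i.e.\ $\varphi(s)=f_{x_0}(s^{1/\alpha})$ for the degree-$\alpha$ case) is a strictly increasing continuous bijection onto $\mathrm{Im}(f)$, with smoothness inherited from $f$; (5) in case (ii), pick $x_1,x_{-1}$ with opposite monotonicities, glue $f_{x_1}$ on $\mathbb{R}_+$ and $t\mapsto f_{x_{-1}}(-t)$ on $\mathbb{R}_-$ into one homeomorphism $\varphi$ of an interval around $0$, and repeat the construction of (3)–(4) assigning positive $p$-values to increasing directions and negative ones to decreasing directions (the zero-value directions map to $0$); (6) prove uniqueness up to left composition of $p$ with a piecewise-linear map: if $\varphi_1\circ p_1=\varphi_2\circ p_2$ with both $p_i$ \phalpha, then on each half-line the reparametrization $p_1\mapsto p_2$ must be a positive scaling (degree-$\alpha$ homogeneity on a half-line), and matching across half-lines through the common $\varphi$ pins this down to a piecewise-linear transformation of $p$.

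The main obstacle is step (2), i.e.\ showing that one \emph{global} $\varphi$ suffices rather than a direction-dependent one: a priori, knowing $f_x$ and $f_y$ are each strictly increasing continuous says nothing about how their \emph{images} line up, and scaling-invariance only directly compares points, not values. The argument must show that if $f(tx)\le f(sy)$ is equivalent to $f(\rho t x)\le f(\rho s y)$ for all $\rho$, then the value-correspondence between the half-lines $\mathbb{R}_+x$ and $\mathbb{R}_+y$ is a homogeneous-of-degree-one rescaling of the parameter — this is where one iterates the scaling relation (as in the proof of Proposition~\ref{continuity-monotone}, pushing $(\frac{s}{t})^k\to 0$) together with continuity of the $f_x$ at $0$ to propagate equality of values from one matched pair of points to the whole half-line. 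A secondary care point is handling directions where $f_x\equiv 0$ consistently with both the homogeneity of $p$ and the requirement that $\varphi(0)=0$: these directions form the zero-level set of $f$, which is a cone by \eqref{homothetic}, and $p$ must vanish exactly there, so one cannot naively use $\|x\|^\alpha$ unless that cone is all of $\mathbb{R}^n$; instead $p$ is set to $0$ on that cone and the normalization on the complementary directions must be chosen continuously enough (which is guaranteed only in the stated cases, not in full generality — consistent with the proposition only claiming the explicit $\varphi$ in cases (i) and (ii)).
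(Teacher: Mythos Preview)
Your plan is correct and follows essentially the same construction as the paper: define $p(x)$ as the unique parameter $t$ with $f(x)=f_{x_0}(t)$ (equivalently, the paper writes $p(x)=1/\lambda_x$ where $\lambda_x x\in\mathcal{L}_{f,x_0}$), set $\varphi=f_{x_0}$ in case (i) and glue $f_{x_1}$ with $t\mapsto f_{x_{-1}}(-t)$ in case (ii), and obtain uniqueness by showing $\bar{\varphi}^{-1}\circ\varphi$ is \phone\ on $\mathrm{Im}(p)$ and hence (piecewise) linear.

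One correction worth making: you substantially overestimate the difficulty of your step (2). No iteration of the scaling relation is needed to show that a single $\varphi$ works across all directions. Given $x$ with $f(x)>0$, if $f(x)\le f(x_0)$ then the intermediate value theorem applied to the continuous $f_{x_0}$ on $[0,1]$ yields $t$ with $f_{x_0}(t)=f(x)$; if $f(x)>f(x_0)$ then the intermediate value theorem applied to the continuous $f_x$ on $[0,1]$ yields $s$ with $f_x(s)=f(x_0)$, and scaling-invariance then gives $f(x)=f_{x_0}(1/s)$. Either way $f(x)\in\mathrm{Im}(f_{x_0})$, and your $t(x)$ exists and is unique by strict monotonicity. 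Also drop the $\|x\|^\alpha$ idea entirely on the zero directions: as you yourself note, $p$ must vanish exactly on the cone $\{f=0\}$, and the paper simply sets $p=0$ there; no continuity of $p$ is claimed or needed in this proposition (continuity of $p$ only enters later, in Theorem~\ref{continuousGeneral}, when $f$ itself is assumed continuous).
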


\begin{proof}
Let $\f$ be an \si\ function such that for any $x\in\mathbb{R}^n$, $\f_x$ is either a constant or a strictly monotonic continuous function. 

In the case where all the $\f_{x}$ are constant for all $x \in \R^{\n}$, then $\f=0$ and therefore we can take $p_{\alpha} = 0$ as a candidate for a continuous \phalpha\ and $\varphi_{\alpha}: t \mapsto t$ as the candidate for the corresponding homeomorphism.

From now on, at least one of the $\acco{\f_{x}}_{x \in \R^{\n}}$ is non-constant.
We now split the proof in two parts, the case where all the non-constant $\f_{x}$ have the same monotonicity and the case where there exist $x_{1}, x_{-1} \in \R^{\n}$ such that $\f_{x_{1}}$ is strictly increasing and $\f_{x_{-1}}$ is strictly decreasing.

{\it Part 1.} Assume here that all the non-constant $\f_{x}$ have the same monotonicity for all $x \in \R^{\n}$.
And up to a transformation $x \mapsto -\f(x)$, we can assume without loss of generality that they are increasing. Therefore $0$ is a global argmin and since we have assumed $f(0)=0:$ $\f(x) \geq 0$ for all $x \in \R^{\n}$. Then there exists $x_{0} \in \R^{\n}$ such that $\f(x_{0}) > 0$.

For any $x\in\level_{\f,x_{0}} = \acco{y\in\R^{\n}, \f(y) = \f(x_{0})}$, and any $\lambda >0$ different than 1, $\lambda x\notin\level_{\f,x_{0}}$. Indeed, as $x\in\level_{\f,x_{0}}$,  we know from Proposition~\ref{continuity-monotone} that $\f_{x}$ is strictly increasing on $\mathbb{R}_{+}$, since $\f_{x}$ cannot be constant equal to $0$.

Moreover, for all $x\in\mathbb{R}^\n$ such that $\f(x)\neq 0$, there exists $\lambda>0$ such that $\lambda x\in\level_{\f,x_{0}}$. Indeed, if $\f(x)< \f(x_{0})$, the intermediate value theorem applied to the continuous function $\f_{x_{0}}$ shows that there exists $0 < t < 1$ such that $\f(t x_{0}) = \f_{x_{0}}(t) = \f(x)$, and then $\f(\frac1t x) = \f(x_{0})$. And by interchanging $x$ and $x_{0}$, the same argument holds if $\f(x) > \f(x_{0})$.

The two last paragraphs ensure that for all $x$ such that $\f(x) \neq 0$, there exists a unique positive number $\lambda_{x}$ such that $\lambda_{x} x\in \level_{\f,x_{0}}$. Let us define the function $p$ for all $x\in \R^{\n}$ as follows: if $\f(x) \neq 0$ then $p\del{w}(x)=\frac{1}{\lambda_x}$, otherwise $p(x) = 0$.
We prove in the following that $p$ is \phone.

Let $x\in\mathbb{R}^n$ and $\rho>0$. If $\f(x)=0$ (hence $\f(\rho x) = 0$), then $p(\rho x)= 0 = \rho p(x)$. Otherwise $\f(x)>0$ (hence $\f(\rho x)>0$), and $p(\rho x) = \frac{\rho}{\lambda_{x}}$ since $\frac{\lambda_{x}}{\rho}$ is the (unique) positive number such that $\frac{\lambda_{x}}{\rho} \rho x = \lambda_{x} x \in \level_{\f,x_{0}}$. And thereby $p(\rho x) = \rho \,p(x)$.

We prove that $f = f_{x_{0}} \circ p$, where $f_{x_{0}}$ is a continuous strictly increasing function and $p$ is \phone. Let $x\in\R^{\n}$. If $\f(x)=0$, then $p(x)=0$, and then $\f(x) = 0 = \f(0) = \f_{x_{0}}(0) = (\f_{x_{0}}\circ p) \,(x)$. Otherwise, we have by construction that $\frac{x}{p(x)} \in \level_{\f,x_{0}}$. Therefore $\f(\frac{x}{p(x)}) = \f(x_{0})$ and then $\f(x) = \f(p(x) x_{0}) = \f_{x_{0}}(p(x))$.
By Theorem \ref{homeomorphismTheorem}, $\varphi = \f_{x_{0}}$ is a homeomorphism. Let $\alpha > 0$, define $\tilde{\varphi} = t \mapsto \varphi(t^{1/\alpha})$ and $\tilde{p}= p^{\alpha}$. Then $\tilde{p}$ is \phalpha, $\tilde{\varphi}$ is a homeomorphism and $\f = \tilde{\varphi} \circ \tilde{p}$.

Assume that we have two couples of solutions $(\varphi, p)$ and $(\bar{\varphi}, \bar{p})$ such that $f = \varphi\circ p = \bar{\varphi} \circ \bar{p}$ where $\varphi, \bar{\varphi}$ are homeomorphisms and $p, \bar{p}$ are \phalpha. For all $t > 0$ and $x \in \R^{\n}$, we have for instance $p(t x) = t^{\alpha} p(x)$. Therefore $\image(p) = \R_{+}$. Denote $\psi = \bar{\varphi}^{-1}\circ \varphi$. For all $\lambda > 0$ and $x \in \R^{\n}$, $\psi(\lambda^{\alpha} p(x)) = \psi(p(\lambda x)) = \bar{p}(\lambda x) = \lambda^{\alpha} \psi(p(x))$. Hence $\psi$ is \phone\ on $\R_{+}$. For all $t > 0$, $\psi(t) = t \psi(1)$. Therefore $\psi$ is linear.

{\it Part 2.}
Assume now that there exist $x_{1}, x_{-1} \in \R^{\n}$ such that $\f_{x_{1}}$ is strictly increasing and $\f_{x_{-1}}$ is strictly decreasing. Then $\f(x_{1}) > 0$ and $\f(x_{-1}) < 0$.
Then thanks to the intermediate value theorem, if $\f(x) > 0$, there exists a unique positive number $\lambda_{x}$ such that $\lambda_{x} x \in \level_{\f, x_{1}}$, and if $\f(x) < 0$, there exists a unique positive number $\lambda_{x}$ such that $\lambda_{x} x \in \level_{\f, x_{-1}}$.
We define now $p$ for all $x\in \R^{\n}$ as follows: if $\f(x) = 0$ then $p(x) = 0$, if $\f(x) > 0$ then $p(x) = \frac{1}{\lambda_{x}}$, and finally if $\f(x) < 0$ then $p(x) = -\frac{1}{\lambda_{x}}$.
Let us show that $p$ is \phone. Indeed for any $\rho > 0$ and $x\in\mathbb{R}^n$, if $\f(x)=0$ (hence $\f(\rho x) = 0$), then $p(\rho x)= 0 = \rho p(x)$. If $\f(x)>0$ (hence $\f(\rho x)>0$), \nnew{then}\del{and} $p(\rho x) = \frac{\rho}{\lambda_{x}} = \rho p(x)$ since $\frac{\lambda_{x}}{\rho}$ is the (unique) positive number such that $\frac{\lambda_{x}}{\rho} \rho x = \lambda_{x} x \in \level_{\f,x_{1}}$.
And finally if $\f(x)<0$ (hence $\f(\rho x)<0$), then $p(\rho x) = -\frac{\rho}{\lambda_{x}}=\rho p(x)$ since $\frac{\lambda_{x}}{\rho}$ is the (unique) positive number such that $\frac{\lambda_{x}}{\rho} \rho x = \lambda_{x} x \in \level_{\f,x_{-1}}$. Hence $p$ is \phone.

We define now the function $\varphi : \mathbb{R} \to \mathbb{R}$ such that if $t \geq 0$, $\varphi(t) = \f_{x_{1}}(t)$ and if $t \leq 0$, $\varphi(t) = \f_{x_{-1}}(-t)$.  Then, $\varphi$ is well defined ($\f_{x_{1}}(0) = 0 = f_{x_{-1}}(0)$), continuous and strictly increasing.

Let $x\in\R^{\n}$. If $\f(x)=0$, then $p(x)=0$, and then $\f(x) = 0 = (\varphi \circ p) \,(x)$. If $\f(x)>0$, $\varphi(p(x)) = \f_{x_{1}}(p(x)) = \f(p(x) x_{1}) = \f(x)$ since $\frac{x}{p(x)} \in \level_{\f,x_{1}}$. And finally if $\f(x)<0$, $\varphi(p(x)) = \f_{x_{-1}}(-p(x)) = \f(-p(x) x_{-1}) = \f(x)$ since $-\frac{x}{p(x)} = \lambda_{x}x \in \level_{\f,x_{-1}}$. Thereby, $f = \varphi\circ p$.
Theorem \ref{homeomorphismTheorem} ensures that $\varphi$ is a homeomorphism.
By defining for all $\alpha > 0$, $\tilde{\varphi}(t) = \varphi(t^{1/\alpha})$ if $t \geq 0$, $\tilde{\varphi}(t) = \varphi(-(-t)^{1/\alpha})$ if $t < 0$, $\tilde{p}(x) = p(x)^{\alpha}$ if $p(x) \geq 0$ and $\tilde{p}(x) = - (-p(x))^{\alpha}$ if $p(x) < 0$, it follows that $\f = \tilde{\varphi} \circ \tilde{p}$.

Assume here again that we have two couples of solutions $(\varphi, p)$ and $(\bar{\varphi}, \bar{p})$ such that $f = \varphi \circ p = \bar{\varphi}\circ \bar{p}$ where $\varphi, \bar{\varphi}$ are homeomorphisms and $p, \bar{p}$ are \phalpha. For all $t > 0$ and $x \in \R^{\n}$, we have $p(t x) = t^{\alpha} p(x)$. Therefore $\image(p) = \R$ since $p(x_{1})$ and $p(x_{2})$ have opposite signs. Denote $\psi = \bar{\varphi}^{-1}\circ \varphi$. For all $\lambda > 0$ and $x \in \R^{\n}$, $\psi(\lambda^{\alpha} p(x)) = \psi(p(\lambda x)) = \bar{p}(\lambda x) = \lambda^{\alpha} \psi(p(x))$. Hence $\psi$ is \phone\ on $\R$. For all $t > 0$, $\psi(t) = t \psi(1)$ and $\psi(-t) = t \psi(-1)$ Therefore depending on the values of $\psi(1)$ and $\psi(-1)$, $\psi$ is either linear or piece-wise linear.
\qed
\end{proof}

We now use the previous proposition to prove that a continuous \si\ function is a homeomorphic transformation of a continuous \phone\ function. The proof relies on the result that for a continuous \si\ function, the $f_x$ are either constant or strictly monotonic and continuous (see Proposition~\ref{continuity-monotone}). We distinguish the case where $f$ has a global optimum as in Proposition~\ref{DarbouxGeneral} (i) and the case where $\f$ does not have a global optimum as in Proposition~\ref{DarbouxGeneral} (ii). Overall the following result holds.

\begin{theorem}
Let $\f$ be a continuous \si\ function. Then for all $\alpha > 0$, there exists a continuous \phalpha\ function $p$ and a strictly increasing and continuous bijection (thus a homeomorphism) $\varphi$ such that
$\f = \varphi \circ p$.

For a non-zero $\f$ and $\alpha > 0$, the choice of $(\varphi, p)$ is unique up to a left composition of $p$ with a piece-wise linear function.
If $\f$ admits a global optimum, then $0$ is also a global optimum\del{and for}. For any $x_{0}\in \R^{\n}$ such that $\f(x_{0}) \neq 0$, in the case where $p$ is a {\rm \phone} function, the homeomorphism $\varphi$\del{ corresponding to a {\rm \phone} function} can be chosen as $\f_{x_{0}}$ and is at least as smooth as $\f$.\done{Niko: check/simplify changes}

If $\f$ does not admit a global optimum, then there exist $x_{1}, x_{-1} \in \R^{\n}$ such that $\f(x_{1}) > 0$ and $\f(x_{-1}) < 0$\del{, and for}. For any such $x_{1}$ and $x_{-1}$, in the case where $p$ is a {\rm \phone} function, the homeomorphism $\varphi$\del{ corresponding to a {\rm \phone} function} can
be chosen as the function equal to $\f_{x_{1}}$ on $\R_{+}$  and equal to $t \mapsto \f_{x_{-1}}(-t)$ on $\R_{-}$.
\label{continuousGeneral}
\end{theorem}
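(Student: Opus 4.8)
The plan is to derive the theorem essentially as a corollary of Proposition~\ref{DarbouxGeneral}, using continuity in three ways: to verify the hypotheses of that proposition, to upgrade $p$ to a continuous function, and to translate the dichotomy ``all non-constant $\f_x$ share the same monotonicity versus not'' appearing there into the dichotomy ``$\f$ admits a global optimum versus not'' appearing here.

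First I would invoke Proposition~\ref{continuity-monotone}: since $\f$ is a continuous \si\ function, for every $x \in \R^\n$ the restriction $\f_x$ is either constant equal to $0$ or strictly monotonic and continuous. This is exactly the hypothesis required by Proposition~\ref{DarbouxGeneral}, so for each $\alpha > 0$ we obtain a \phalpha\ function $p$ and a strictly increasing, continuous bijection $\varphi$ with $\f = \varphi \circ p$, together with the stated uniqueness up to left composition of $p$ with a piece-wise linear map. To see that $p$ can be taken continuous, note that $\varphi$ is a continuous bijection defined on an interval of $\R$; hence Corollary~\ref{continuous-positively} (equivalently, $\varphi^{-1}$ is continuous and $p = \varphi^{-1} \circ \f$) gives that $p$ is continuous.

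Next I would match the two dichotomies. Suppose $\f$ admits a global optimum; by Corollary~\ref{localGlobal}, $0$ is then a global optimum, say a global argmin (the argmax case is symmetric via $x \mapsto -\f(x)$). Then $\f_x(t) = \f(tx) \ge \f(0) = 0 = \f_x(0)$ for all $t \ge 0$ and all $x$, so no $\f_x$ can be strictly decreasing; hence every non-constant $\f_x$ is strictly increasing, i.e.\ they all share the same monotonicity, which is case~(i) of Proposition~\ref{DarbouxGeneral}. Conversely, if all non-constant $\f_x$ share the same monotonicity, then as shown in Part~1 of the proof of Proposition~\ref{DarbouxGeneral} (after the reduction $x \mapsto -\f(x)$) the point $0$ is a global argmin, so $\f$ admits a global optimum. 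Consequently $\f$ admits a global optimum if and only if we are in case~(i); otherwise we are in case~(ii), where Proposition~\ref{DarbouxGeneral} furnishes $x_{1}, x_{-1}$ with $\f_{x_{1}}$ strictly increasing (so $\f(x_{1}) > \f(0) = 0$) and $\f_{x_{-1}}$ strictly decreasing (so $\f(x_{-1}) < 0$).

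Finally I would read off the explicit descriptions of $\varphi$. In case~(i), part~(i) of Proposition~\ref{DarbouxGeneral} allows, when $p$ is \phone, the choice $\varphi = \f_{x_{0}}$ for any $x_{0}$ with $\f(x_{0}) \neq 0$; since $\f_{x_{0}} = \f \circ (t \mapsto t x_{0})$ is $\f$ precomposed with a linear map, $\varphi$ is at least as smooth as $\f$. In case~(ii), part~(ii) of Proposition~\ref{DarbouxGeneral} allows, when $p$ is \phone, the choice of $\varphi$ equal to $\f_{x_{1}}$ on $\R_{+}$ and to $t \mapsto \f_{x_{-1}}(-t)$ on $\R_{-}$. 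The main obstacle is nothing deep but rather the bookkeeping of the third step---verifying that ``$\f$ has a global optimum'' is genuinely equivalent to the monotonicity alternative of Proposition~\ref{DarbouxGeneral}, for which continuity (through Corollary~\ref{localGlobal}) is essential; everything else is a direct transfer from the already-proven proposition.
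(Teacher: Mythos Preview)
Your proposal is correct and follows essentially the same route as the paper: invoke Proposition~\ref{continuity-monotone} to feed into Proposition~\ref{DarbouxGeneral}, use Corollary~\ref{localGlobal} to translate ``global optimum'' into the monotonicity dichotomy of that proposition, and then apply Corollary~\ref{continuous-positively} to obtain continuity of $p$. The only minor point you leave implicit is the ``for any such $x_{1}, x_{-1}$'' clause in the no-global-optimum case: one should note that any $x_{1}$ with $\f(x_{1})>0$ automatically has $\f_{x_{1}}$ strictly increasing (since $\f_{x_{1}}$ is either constant $0$ or strictly monotonic with $\f_{x_{1}}(1)>0=\f_{x_{1}}(0)$), and similarly for $x_{-1}$, so Proposition~\ref{DarbouxGeneral}(ii) indeed applies with that arbitrary choice.
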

\begin{proof}
Let $\f$ be a continuous \si\ function. Thanks to Proposition~\ref{continuity-monotone}, for all $x \in \R^{\n}$, $\f_{x}$ is either constant equal to 0 or strictly monotonic.

{\it Part 1.} Assume that $\f$ has a global optimum. Corollary \ref{localGlobal} shows that $0$ is also a global optimum. 
Then we can apply Proposition~\ref{DarbouxGeneral} in the case where the non-constant $\f_{x}$ have the same monotonicity. Let $x_{0} \in \R^{\n}$ such that $\f(x_{0}) \neq 0$ and define $\varphi = \f_{x_{0}}$. Then $\f = \varphi \circ p$ and $\varphi$ is a homeomorphism. That settles the continuity of the \phone\ function $\varphi^{-1}\circ \f$ thanks to Corollary~\ref{continuous-positively}.

{\it Part 2.} Assume in this part that $\f$ has no global optimum. Since $0$ is not a global optimum, we can find $x_{1}$ and $x_{-1}$ such that $\f(x_{1}) > 0$ and $\f(x_{-1}) < 0$. Therefore $\f_{x_{1}}$ is strictly increasing and $\f_{x_{-1}}$ is strictly decreasing. We apply Proposition~\ref{DarbouxGeneral} in the case where the non-constant $\f_{x}$ do not have the same monotonicity.
If $\varphi$ is the function equal to $\f_{x_{1}}$ on $\R_{+}$ and to $t \mapsto \f_{x_{-1}}(-t)$ on $\R_{-}$, then $\f = \varphi \circ p$ where $\varphi$ is a homeomorphism. That settles the continuity of the \phone\ function $\varphi^{-1}\circ \f$ thanks to Corollary~\ref{continuous-positively}.
For all $\alpha > 0$, the unique construction of $(\varphi, p)$ up to a piece-wise linear function in both parts is a consequence of Proposition~\ref{DarbouxGeneral}. 
\qed \end{proof}

\subsection{Sufficient and Necessary Condition to be the Composite of a \ph\ Function}
\label{sec:iff}
We have seen in the previous section that a \emph{continuous} \si\ function can be written as $\varphi \circ p$ with $\varphi$ strictly monotonic and $p$ \ph. Relaxing continuity, we prove in the next theorem some necessary \emph{and} sufficient condition under which an \si\ function is the composite of a \ph\  function with  a strictly monotonic function.

\begin{theorem}

Let $\f$ be an \si\ function. There exist a \phone\ function $p$ and a strictly increasing function\del{ and continuous bijection (thus a homeomorphism)} $\varphi$ such that $\f = \varphi \circ p$ if and only if for all $x\in\mathbb{R}^n$, the function $\f_x$ is either constant or  strictly monotonic and the strictly increasing $\f_x$ share the same image (\ie, if $\lambda\in\mathbb{R}$ is reached for one of these functions, then it is reached for all of them) and the strictly decreasing ones too.

For a non-zero $\f$, up to a left composition of $p$ with a piece-wise linear function, the choice of $(\varphi, p)$ is unique.
\label{equivalence-transformation}
\end{theorem}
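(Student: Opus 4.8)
The plan is to prove the two implications separately and then the uniqueness clause. For the \emph{necessity} direction, suppose $\f=\varphi\circ p$ with $p$ a \phone\ function and $\varphi$ strictly increasing. Since $p(0)=0$ and $p(tx)=t\,p(x)$ for $t\ge 0$, every restriction satisfies $\f_x(t)=\varphi\pare{t\,p(x)}$ on $\R_{+}$, and the trichotomy falls out immediately from the sign of $p(x)$: if $p(x)=0$ then $\f_x$ is constant; if $p(x)>0$ then $t\mapsto t\,p(x)$ is strictly increasing on $\R_{+}$ and hence so is $\f_x$; if $p(x)<0$ then $\f_x$ is strictly decreasing. For the image condition, if $\f_x$ is strictly increasing then $p(x)>0$, so $\acco{t\,p(x):t\in\R_{+}}=\R_{+}$ and $\image(\f_x)=\varphi(\R_{+})$, a set independent of $x$; likewise all strictly decreasing restrictions have image $\varphi(\R_{-})$.

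For the \emph{sufficiency} direction I would reuse, essentially verbatim, the construction in the proof of Proposition~\ref{DarbouxGeneral}, the only change being that the one appeal there to the intermediate value theorem --- the only place continuity is genuinely needed to build $p$ --- is replaced by the shared-image hypothesis; and, since we now only require $\varphi$ to be strictly increasing (not a homeomorphism), the invocation of the bijection theorem is dropped. In detail: if $\f\equiv 0$, take $p=0$ and $\varphi=\mathrm{id}$. Otherwise the hypothesis gives that each $\f_x$ is constant or strictly monotonic, and we split into the two cases of Proposition~\ref{DarbouxGeneral}. If all non-constant $\f_x$ share the same monotonicity, say increasing (the decreasing case is symmetric), then $\f\ge 0$; fix $x_0$ with $\f(x_0)>0$. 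For $x$ with $\f(x)\neq 0$ the restriction $\f_x$ is non-constant, hence strictly increasing, hence by the shared-image hypothesis $\f(x)\in\image(\f_{x_0})$; since $\f_{x_0}$ is injective and $\f_{x_0}(0)=0\neq\f(x)$, there is a unique $s>0$ with $\f_{x_0}(s)=\f(x)$, and I set $p(x)=s$ (and $p(x)=0$ when $\f(x)=0$). Applying scaling-invariance in its equality form to $\f\pare{p(x)\,x_0}=\f(x)$ yields $\f\pare{\rho\,p(x)\,x_0}=\f(\rho x)$ for all $\rho>0$, so $p(\rho x)=\rho\,p(x)$ and $p$ is \phone; and $\f=\f_{x_0}\circ p$ holds by construction, so $\varphi:=\f_{x_0}$ works. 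In the remaining case there are $x_1,x_{-1}$ with $\f_{x_1}$ strictly increasing and $\f_{x_{-1}}$ strictly decreasing; I define $p$ through the level set of $x_1$ on $\acco{x:\f(x)>0}$ and through that of $x_{-1}$ on $\acco{x:\f(x)<0}$ (again the needed surjectivity onto these level sets is the shared-image hypothesis), with $p=0$ on $\acco{x:\f(x)=0}$, and set $\varphi$ equal to $\f_{x_1}$ on $\R_{+}$ and to $t\mapsto\f_{x_{-1}}(-t)$ on $\R_{-}$; then $\varphi$ is well defined, strictly increasing, $p$ is \phone, and $\f=\varphi\circ p$ by the same verification as in Proposition~\ref{DarbouxGeneral}.

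For \emph{uniqueness}, assume $\f\neq 0$ and $\f=\varphi\circ p=\bar\varphi\circ\bar p$ with $\varphi,\bar\varphi$ strictly increasing and $p,\bar p$ \phone. From $p(0)=\bar p(0)=\f(0)=0$ we get $\varphi(0)=\bar\varphi(0)=0$, so $p(x)$, $\bar p(x)$ and $\f(x)$ all have the same sign for every $x$; combined with the homogeneity of $p$ and $\bar p$ this forces $\image(p)=\image(\bar p)=:I$, where $I$ is one of $\R_{+}$, $\R_{-}$, $\R$ and is determined by $\f$. Then $\psi:=\bar\varphi^{-1}\circ\varphi$ is a well-defined strictly increasing self-bijection of $I$ and $\bar p=\psi\circ p$. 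Since $\psi\pare{\lambda\,p(x)}=\psi\pare{p(\lambda x)}=\bar p(\lambda x)=\lambda\,\bar p(x)=\lambda\,\psi\pare{p(x)}$ for all $\lambda>0$ and $x\in\R^{\n}$, and since $p$ is onto $I$, the map $\psi$ is positively homogeneous of degree $1$ on $I$; hence $\psi(0)=0$ and $\psi$ is linear with a positive slope on $I\cap\R_{+}$ and on $I\cap\R_{-}$, \ie\ piece-wise linear. Therefore $\bar p=\psi\circ p$ with $\psi$ piece-wise linear (and $\bar\varphi=\varphi\circ\psi^{-1}$ on $I$), which is exactly the asserted uniqueness.

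The main obstacle is conceptual rather than computational: one must recognize that the shared-image hypothesis is precisely the surjectivity statement hidden inside Proposition~\ref{DarbouxGeneral} --- namely that any $x$ with $\f(x)\neq 0$ can be rescaled onto a fixed reference level set --- which in the continuous setting was supplied for free by the intermediate value theorem. Once this substitution is isolated, the construction of $p$ and the proof that it is \phone\ are routine transcriptions of the earlier argument. The only other point needing care is the bookkeeping in the uniqueness proof, where one must track which of the three possible shapes of $\image(p)$ actually occurs, so as to correctly conclude ``piece-wise linear'' (possibly different positive slopes on $\R_{+}$ and on $\R_{-}$) rather than merely ``linear''.
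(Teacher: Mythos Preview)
Your proposal is correct and follows essentially the same approach as the paper: the forward implication via the trichotomy on the sign of $p(x)$, the backward implication by recycling the construction of Proposition~\ref{DarbouxGeneral} with the shared-image hypothesis substituted for the intermediate value theorem, and uniqueness by showing $\psi=\bar\varphi^{-1}\circ\varphi$ is \phone\ on $\image(p)$. Your uniqueness argument is in fact spelled out more completely than the paper's, which simply refers back to Proposition~\ref{DarbouxGeneral}; and your direct definition $p(x)=\f_{x_0}^{-1}(\f(x))$ is just a relabeling of the paper's $p(x)=1/\lambda_x$ via scaling-invariance.
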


\begin{proof}
We prove first the forward implication.
Suppose there is a \phone\ function $p$ and a strictly monotonic function $\varphi$ such that $f=\varphi\circ p$. Consider $x\in\mathbb{R}^n$. Either $p(x)=0$ and then for any $t\geqslant 0$ we have that $p(tx)=0$, so that $\f_x(t) = f(tx)=\varphi(p(tx)) = \varphi(0)$ and $\f_{x}$ is constant on $\mathbb{R}_+$, or $p(x)\neq 0$, and then $t\in\mathbb{R}_+\mapsto p(tx)=tp(x)$ is strictly monotonic, and $f_x(t)=\varphi(p(tx))$ is strictly monotonic too on $\mathbb{R}_+$. Moreover, consider $x_1\neq x_2$ such that $f_{x_1}$ and $f_{x_2}$ are increasing. Then $p(x_1)$ and $p(x_2)$ are of the same sign, so there is some $t_*>0$ such that $p(x_1)=t_*p(x_2)=p(t_*x_2)$, so the functions $t\mapsto f(tx_1)$ and $t\mapsto f(tt_*x_2)$ are equal, so the functions $f_{x_1}$ and $f_{x_2}$ take the same values. The same applies on the strictly decreasing functions.

We now prove the backward implication.
Suppose that the functions $f_x$ are either constant or  strictly monotonic and the increasing ones share the same values and the decreasing ones too.

If all the $\f_{x}$ are constant, then for all $x\in\R^{\n}$, $\f(x) = \f(0) = 0$ and it is enough to write $\f = \varphi \circ p$ with $p = t \mapsto t$ on $\R_{+}$ and $p=0$. We assume from now on that at least one $\f_{x}$ is not constant.

Consider that all the non-constant $\f_{x}$ have the same monotonicity. Let us choose $x_{0}$ such that $\f(x_{0}) \neq 0$. Then for all $x \neq 0$, $f_{x}$ and $f_{x_{0}}$ have the same monotonicity. Since they have the same image and are injective, there exists a unique $\lambda_{x} > 0$ such that $\lambda_{x} x\in \level_{\f,x_{0}}$. We then define $p$ and $\varphi$ as in the Part 1 of Theorem~\ref{continuousGeneral} to ensure that $\f = \varphi \circ p$ where $p$ is \phone\ and $\varphi$ is strictly monotonic.

Consider finally that all the non-constant $\f_{x}$ do not have the same monotonicity. Then there exist $x_1$ and $x_{-1}$ such that $f(x_1)>0$ and $f(x_{-1})<0$.  
Then, thanks to the assumption that all increasing $f_x$ share the same values and the strictly decreasing $\f_x$ too, if $\f(x)>0$, then there exists a unique positive number $\lambda_x$ such that $\lambda_x x\in\mathcal{L}_{f,x_1} = \acco{y\in\R^{\n}, \f(y) = \f(x_{1})}$, and if $f(x)<0$, then there exists a unique (thanks to the assumption of strict monotonicity for the non-constant $f_x$) positive number $\lambda_x$ such that $\lambda_xx\in\mathcal{L}_{f,x_{-1}}$. Therefore, we can define $p$ as in Theorem~\ref{continuousGeneral}. As before, $p$ is \phone. 
Define also the function $\varphi:\R\to\R$ as in Theorem~\ref{continuousGeneral}. It is still increasing, but not necessarily continuous.
Then, as in Theorem~\ref{continuousGeneral}, $\f=\varphi\circ p$.

The proof of the unicity of $\pare{\varphi, p}$ up to a piece-wise real linear function is similar to the proof in Proposition~\ref{DarbouxGeneral}.
\qed \end{proof}
Complementing Theorem~\ref{equivalence-transformation}, we construct an example of an \si\ function that can not be decomposed as $\f={}\varphi\circ p$, because the strictly increasing $\f_{x}$ do not share the same image. Define $\f$ such that for all $x \in \R^{\n}$, $\f(x) = \tanh(x_{1})$ if the first coordinate $x_{1} \geq 0$ and $\f(x) = 1 + \exp(-x_{1})$ otherwise. Then $\f$ is \si\ and if $x_{1} \neq 0$, $\f_{x}$ is strictly increasing. However for all $x$ such that $x_{1} > 0$ then $\image(\f_{x}) = [0, 1)$ and otherwise for $x$ such that $x_1 \leq 0$, $\image(\f_{x}) = \acco{0}\cup (2, \infty)$.

\section{Level Sets of \si\ Functions}
\label{section-level-sets}

Scaling-invariant functions appear naturally when studying the convergence of comparison-based optimization algorithms \cite{auger2016linear}. In this specific context, the difficulty of a problem is entirely determined by its level sets whose properties are studied in this section.

\subsection{Identical Sublevel Sets}
\label{equivalence-positive}

Level sets and sublevel sets of a function $f$ remain unchanged if we compose the function with a strictly increasing function $\varphi$ since
\begin{align}
\f(x) \leq \f(y) \iff \varphi (f(x)) \leq \varphi(f(y))
\enspace.
\label{invariance-strict-increasing}
\end{align}

We prove in the next theorem that two arbitrary functions $\f$ and $p$ have the same level sets if and only if $f = \varphi \circ p$ where $\varphi$ is strictly increasing.

\begin{theorem}
Two functions $\f$ and $p$ have the same sublevel sets if and only if there exists a strictly increasing function $\varphi$ such that $\f = \varphi \circ p$.
\label{samesublevelsets}
\end{theorem}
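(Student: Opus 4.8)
The plan is to prove the two implications separately, with the forward direction being essentially immediate from \eqref{invariance-strict-increasing} and the reverse direction being the substantive part. For the forward implication, suppose $\f = \varphi \circ p$ with $\varphi$ strictly increasing. Then for any $x, y$, the chain of equivalences $\f(x) \leq \f(y) \iff \varphi(p(x)) \leq \varphi(p(y)) \iff p(x) \leq p(y)$ holds, where the last step uses that $\varphi$ is strictly increasing (hence order-reflecting as well as order-preserving). Taking $y$ fixed, this says exactly that $\sublevel_{\f, y} = \sublevel_{p, y}$ as sets, and since every sublevel set of $\f$ is of the form $\sublevel_{\f,y}$ for some $y$ (and likewise for $p$), the two families of sublevel sets coincide.

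For the reverse implication, assume $\f$ and $p$ have the same sublevel sets. The key observation is that having the same sublevel sets forces the two functions to induce the same total preorder on $\R^\n$: I claim $\f(x) \leq \f(y) \iff p(x) \leq p(y)$ for all $x, y$. Indeed $\f(x) \leq \f(y)$ means $x \in \sublevel_{\f,y}$; by hypothesis $\sublevel_{\f,y}$ equals some sublevel set $\sublevel_{p,z}$ of $p$, and since $y \in \sublevel_{\f,y} = \sublevel_{p,z}$ we get $p(y) \leq p(z)$, while $x \in \sublevel_{p,z}$ gives $p(x) \leq p(z)$; conversely one shows $\sublevel_{p,z} \subseteq \sublevel_{p,y}$ so that $x \in \sublevel_{p,y}$, i.e. $p(x) \leq p(y)$. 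The symmetric argument gives the converse. In particular $\f(x) = \f(y) \iff p(x) = p(y)$, so the level sets of $\f$ refine to those of $p$ and vice versa; equivalently $p$ factors through the fibers of $\f$.

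It then remains to construct $\varphi$ on $\image(\f)$ by setting $\varphi(t) = p(x)$ for any $x$ with $\f(x) = t$; this is well-defined by the equivalence of level sets, it satisfies $\f = \varphi \circ p$ by construction wait---one needs $p = \varphi \circ \f$, so actually one defines $\varphi$ on $\image(\f)$ so that $p = \varphi \circ \f$, and strict monotonicity of $\varphi$ on $\image(\f)$ follows directly from $\f(x) < \f(y) \iff p(x) < p(y)$. Finally $\varphi$ must be extended from $\image(\f)$ to all of $\R$ while remaining strictly increasing: on the gaps of $\image(\f)$ one interpolates by any strictly increasing function (e.g. affine or $\arctan$-type pieces) joining the one-sided limits, and below/above $\image(\f)$ one appends strictly increasing rays; care is needed where $\image(\f)$ has suprema or infima not attained, but a strictly increasing extension always exists since $\R$ is order-dense and unbounded. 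The main obstacle is precisely this bookkeeping of the extension of $\varphi$ off $\image(\f)$ and checking that the well-definedness argument correctly uses mutual inclusion of sublevel sets rather than just one direction; the preorder-equivalence claim is the crux and everything else is routine.
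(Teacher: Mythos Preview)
Your overall strategy—showing that identical sublevel sets force the same preorder $\f(x)\le \f(y)\iff p(x)\le p(y)$, then reading off a strictly increasing map between the images—is exactly the paper's approach. Two points, however, need attention.

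First, you have the direction of the composition reversed. The theorem asks for $\f=\varphi\circ p$, but after your ``wait'' you settle on constructing $\varphi$ on $\image(\f)$ with $p=\varphi\circ\f$. That is the wrong identity. The paper handles this by defining $\phi:\image(\f)\to\image(p)$, $\phi(\f(x))=p(T(x))$, proving $\phi\circ\f=p$, and then setting $\varphi=\phi^{-1}$; alternatively, you can define $\varphi$ directly on $\image(p)$ via $\varphi(p(x))=\f(x)$. Either way you must end with $\f=\varphi\circ p$, and your current write-up does not.

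Second, your preorder argument has a small hole. From $\sublevel_{\f,y}=\sublevel_{p,z}$ and $y\in\sublevel_{p,z}$ you get $p(y)\le p(z)$, hence $\sublevel_{p,y}\subseteq\sublevel_{p,z}$; you then assert ``conversely one shows $\sublevel_{p,z}\subseteq\sublevel_{p,y}$'' without justification. The missing step is to use the hypothesis in the other direction: since $\sublevel_{p,y}$ is also some $\sublevel_{\f,w}$ and $y$ lies in it, one gets $\sublevel_{\f,y}\subseteq\sublevel_{\f,w}=\sublevel_{p,y}$, whence $\sublevel_{p,z}=\sublevel_{\f,y}\subseteq\sublevel_{p,y}$. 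This yields $\sublevel_{\f,y}=\sublevel_{p,y}$ for every $y$, from which the preorder equivalence is immediate. Finally, extending $\varphi$ to all of $\R$ is unnecessary: the paper (and the statement) only requires a strictly increasing $\varphi$ on $\image(p)$.
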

\begin{proof}
If $f = \varphi \circ p$ with $\varphi$ strictly increasing, since sublevel sets are invariant by $\varphi$, $f$ and $p$ have the same sublevel sets.
Now assume that $\f$ and $p$ have the same sublevel sets.
Then for all $x \in \R^{\n}$, there exists $T(x) \in \R^{\n}$ such that $\sublevel_{\f, x} = \sublevel_{p, T(x)}$. In other words\nnew{,} for all $y \in \R^{\n}$,  
$ \f(y) \leq \f(x) \iff p(y) \leq p(T(x))$.
We define the function $$\phi: \left\{\fun{{\rm Im}(\f)}{{\rm Im}(p)}{\f(x)}{ p\pare{ T(x) } }\right. \enspace. $$ The function
$\phi$ is well-defined because for $x, y \in \R^{\n}$ such that $\f(x) = \f(y)$, $\sublevel_{\f,x} = \sublevel_{\f,y}$. And since $\sublevel_{\f,x} = \sublevel_{p,T(x)}$ and $\sublevel_{\f,y} = \sublevel_{p, T(y)}$, then $\sublevel_{p, T(x)} = \sublevel_{p, T(y)}$, and then $p\pare{ T(x)} = p\pare{ T(y)}$. Therefore $\phi(\f(x)) = \phi(\f(y))$. By construction we have that
$ \phi\circ \f = p \circ T$.

Let us show that $p\circ T = p$. We have  $\sublevel_{p\circ T, x} = \sublevel_{\f, x} = \sublevel_{p, T(x)}$. Then $x \in \sublevel_{p, T(x)}$ and then $p(x) \leq p(T(x))$. Therefore $p \leq p\circ T$. In addition for all $y \in \R^{n}$, there exists $x$ such that $\sublevel_{p, y} = \sublevel_{\f, x} = \sublevel_{p, T(x)}  =  \sublevel_{p\circ T, x} $. Then $y \in \sublevel_{p\circ T, x}$, which induces that $p(T(y)) \leq p(T(x))$. Plus, $\sublevel_{p, y} = \sublevel_{p, T(x)}$, therefore $p(T(x)) = p(y)$. Thereby $p(T(y)) \leq p(y)$, and then $p\circ T \leq p$. Finally $p\circ T = p$. Hence $\phi\circ \f = p$.

Let us prove now that $\phi$ is strictly increasing. Consider $x, y \in \R^{\n}$ such that $\f(x) < \f(y)$. Then $\sublevel_{f, x} \subset \sublevel_{f, y}$ with a strict inclusion, which means that $\sublevel_{p, T(x)} \subset \sublevel_{p, T(y)}$ with a strict inclusion. Thereby $p \pare{ T(x) } < p \pare{ T(y)}$, \ie, $\phi (\f (x)) < \phi (\f (y))$. Hence $\phi$ is strictly increasing.
And up to restricting $\phi$ to its image, we can assume without loss of generality that $\phi$ is a strictly increasing bijection. We finally denote $\varphi = \phi^{-1}$ and it follows that $\f = \varphi \circ p$. \qed 
\end{proof}

Theorem~\ref{samesublevelsets} and Theorem~\ref{equivalence-transformation} give both equivalence conditions for an \si\ function $\f$ to be equal to $\varphi\circ p$ where $\varphi$ is strictly increasing and $p$ is positively homogeneous\footnote{Note that in Theorem~\ref{equivalence-transformation}, we can assume without loss of generality that $\varphi$ is always strictly increasing by replacing if needed $\varphi$ by $t \to \varphi(-t)$ and $p$ by $-p$.}. One condition is that there exists a \ph\ function with the same sublevel sets as $\f$, while the other condition is that the $\f_{x}$ are either constant or  strictly monotonic, and the strictly increasing and decreasing ones have the same image, respectively.

\subsection{Compactness of the Sublevel Sets}
\label{section-compact}
Compactness of sublevel sets is relevant for analyzing step-size adaptive randomized search algorithms \cite{auger2013linear, morinaga2019generalized}. We investigate here how compactness properties shown for positively homogeneous functions extend to scaling-invariant functions.
For an \si\ function $\f$, we have $\sublevel_{f, t x} = t \sublevel_{f,x}$.
When\del{Since} $\psi: \foncsmall{y}{t y}$ is a homeomorphism, we have that $\psi(\sublevel_{f,x})$ equals $t \sublevel_{f,x}$ and is compact if and only if $\sublevel_{f,x}$ is compact. Therefore, for all $t > 0$:
\begin{align}
\sublevel_{f,tx}  \text{ is compact if and only if } \sublevel_{f,x}   \text{ is compact. } 
\label{compact-lines}
\end{align}

Furthermore, if $p$ is a lower semi-continuous positively homogeneous function such that $p(x) > 0$ for all nonzero $x$ then the sublevel sets of $p$ are compact~\cite[Lemma 2.7]{auger2013linear}.  We recall it with all the details in the following proposition:

\begin{proposition}[\nnew{Lemma 2.7 in} \cite{auger2013linear}]
Let $p$ be a positively homogeneous function with degree $\alpha>0$ and $p(x)>0$ for all $x\neq 0$ (or equivalently\nnew{,} $0$ is the unique global argmin of $p$) and $p(x)$ finite for every $x\in\R^n$. Then for every $x \in \R^{\n}$, the following holds:
\begin{enumerate}
\item[{\rm (i)}] $\dsp\lim_{t\to 0} p(tx)=0$ and for all $x\neq 0$ the function
$p_{x}: \foncfast{ [ 0, \infty ) }{t}{p(t x)}{\R_{+}}$
 is continuous, strictly increasing and converges to $\infty$ when $t$ goes to $\infty$.
\item[{\rm (ii)}] If $p$ is lower semi-continuous, the sublevel set $\sublevel_{p, x}$ is compact.
\end{enumerate}
\end{proposition}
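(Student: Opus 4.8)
The plan is to prove the two parts of the proposition separately, using the positivity and homogeneity of $p$ in an essential way.

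For part (i), I would first establish the limit $\lim_{t\to 0} p(tx)=0$. By positive homogeneity of degree $\alpha$, $p(tx) = t^\alpha p(x)$ for $t>0$, so $p(tx)\to 0$ as $t\to 0^+$ simply because $\alpha>0$ and $p(x)$ is finite; and $p(0)=0$ since $p(0) = p(2\cdot 0) = 2^\alpha p(0)$ forces $p(0)=0$ (here using that $0$ is the unique global argmin, or equivalently that $p(x)>0$ for $x\neq 0$, which is consistent with $p(0)=0$). Next, for fixed $x\neq 0$, the function $p_x(t) = p(tx) = t^\alpha p(x)$ for $t>0$; since $x\neq 0$ we have $p(x)>0$ by hypothesis, so $p_x(t) = t^\alpha p(x)$ is a positive multiple of $t^\alpha$, hence continuous on $(0,\infty)$, strictly increasing (as $t\mapsto t^\alpha$ is strictly increasing for $\alpha>0$), tends to $0$ as $t\to 0$ (matching $p_x(0)=p(0)=0$, so continuity extends to $[0,\infty)$), and tends to $\infty$ as $t\to\infty$. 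The only subtlety is making sure the formula $p(tx)=t^\alpha p(x)$ is allowed at $t=0$ as a limit statement rather than an identity, which is handled by the separate computation $p(0)=0$.

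For part (ii), assume additionally that $p$ is lower semi-continuous, and fix $x$. I would show $\sublevel_{p,x} = \{y : p(y)\le p(x)\}$ is compact by showing it is closed and bounded. Closedness is immediate from lower semi-continuity of $p$: sublevel sets of a lower semi-continuous function are closed. For boundedness, I would argue by contradiction: if $\sublevel_{p,x}$ were unbounded, there would be a sequence $(y_k)$ in it with $\|y_k\|\to\infty$; writing $y_k = \|y_k\| u_k$ with $u_k$ on the unit sphere, homogeneity gives $p(y_k) = \|y_k\|^\alpha p(u_k)$. By compactness of the unit sphere, pass to a subsequence with $u_k\to u$, $\|u\|=1$; lower semi-continuity gives $p(u)\le \liminf p(u_k)$, and I need a lower bound $\inf_{\|v\|=1} p(v) =: m > 0$. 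This infimum is positive because a lower semi-continuous function on the compact unit sphere attains its minimum, and that minimum is $p(v_0)$ for some $v_0$ with $\|v_0\|=1\neq 0$, hence $p(v_0)>0$. Then $p(y_k)\ge \|y_k\|^\alpha m \to \infty$, contradicting $p(y_k)\le p(x)<\infty$. Hence $\sublevel_{p,x}$ is bounded, and being also closed, it is compact.

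The main obstacle is the boundedness argument in part (ii): one must extract the uniform positive lower bound $m=\min_{\|v\|=1} p(v)>0$, which requires invoking that a lower semi-continuous function attains its infimum on a compact set (here the unit sphere) and that this infimum is strictly positive by the hypothesis $p(v)>0$ for $v\neq 0$. Everything else — the homogeneity computations, $p(0)=0$, closedness from lower semi-continuity, and the monotonicity/continuity claims in part (i) — reduces to the explicit formula $p(tx)=t^\alpha p(x)$ and elementary properties of $t\mapsto t^\alpha$.
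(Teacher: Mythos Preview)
Your proof is correct. Note, however, that the paper does not actually prove this proposition: it is recalled verbatim from \cite{auger2013linear} (Lemma~2.7 there) and stated without argument, serving only as motivation for the analogous Theorem~\ref{equivalences} on \si\ functions. Your argument---the explicit formula $p_x(t)=t^\alpha p(x)$ for part~(i), and closedness from lower semi-continuity together with boundedness via the positive minimum $m=\min_{\|v\|=1}p(v)>0$ attained on the compact sphere for part~(ii)---is exactly the standard one, and it is the same ingredients (the minimum on $\S_1$ and the homogeneity scaling) that the paper itself uses when proving the \si\ analogue in Theorem~\ref{equivalences} and Proposition~\ref{boundpositive}.
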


We prove a similar theorem for lower semi-continuous \si\ functions $\f$ with continuous $\f_{x}$ functions, showing in particular that the unicity of the global argmin is equivalent to \del{the above }items \nnew{similar to the above}.
Note that we need to assume the continuity of the functions $f_x$, while this property is unconditionally satisfied for positively homogeneous functions where $p_{x}(t) = t^{\alpha}p_{x}(1)$ for all $x \in \R^{\n}$ and for all $t > 0$.

\begin{theorem}
Let \f\ be \si. Then the conditions
\begin{itemize}
\item $\f(x) > 0$ for all $x \not= 0$ and
\item $0$ is the unique global argmin
\end{itemize}
are equivalent.
Let $\f$ be additionally lower semi-continuous and for all $x \in \R^{\n}$, $\f_{x}$ is continuous on a neighborhood of $0$. Then the following are equivalent:
\begin{itemize}
\item[{\rm (i)}] $0$ is the unique global argmin.

\item[{\rm (ii)}] \nnew{F}or any $x\in \R^{\n}\backslash\acco{0}$, the function $\f_{x}$ is strictly increasing.

\item[{\rm (iii)}] The sublevel sets $\sublevel_{\f, x}$ for all $x$ are compact.
\end{itemize}
\label{equivalences}
\end{theorem}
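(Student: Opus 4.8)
The plan is to prove the first equivalence directly from the normalization $\f(0)=0$, and then to close the cycle $\mathrm{(ii)}\Rightarrow\mathrm{(i)}\Rightarrow\mathrm{(ii)}$ and $\mathrm{(i)}\Rightarrow\mathrm{(iii)}\Rightarrow\mathrm{(i)}$, which makes $\mathrm{(i)}$, $\mathrm{(ii)}$, $\mathrm{(iii)}$ pairwise equivalent. The first equivalence is immediate: since $\f(0)=0$, saying $\f(x)>0$ for all $x\neq 0$ is exactly saying that $\f$ attains its minimum value $0$ and only at $0$. For $\mathrm{(ii)}\Rightarrow\mathrm{(i)}$: if every $\f_x$ with $x\neq 0$ is strictly increasing, then $\f(x)=\f_x(1)>\f_x(0)=\f(0)=0$, so $0$ is the unique global argmin.

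For $\mathrm{(i)}\Rightarrow\mathrm{(ii)}$, I would fix $x\neq 0$ and set $g=\f_x$, which is \si\ on $\R_{+}$, continuous near $0$, with $g(0)=0$ and $g>0$ on $(0,\infty)$. If $g$ were not strictly increasing there would be $0<s<t$ with $g(s)\geq g(t)$. If $g(s)=g(t)$, scaling-invariance gives $g(s/t)=g(1)$, hence $g((s/t)^{k})=g(1)$ for every $k$, and letting $k\to\infty$ with continuity at $0$ forces $g(1)=g(0)=0$, contradicting $g(1)>0$. If $g(s)>g(t)$, then with $\lambda=t/s>1$ and scaling by $u/s$ we get $g(u)>g(\lambda u)$ for every $u>0$; iterating, the sequence $a_k=g(\lambda^{-k}u)$ is strictly increasing and strictly positive, while $\lambda^{-k}u\to 0$ and continuity at $0$ force $a_k\to 0$, again a contradiction. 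Hence $g$ is strictly increasing. This "scaling toward $0$" argument is the core of the theorem and, I expect, the main obstacle: it is the only place where continuity of the $\f_x$ at $0$ is genuinely used, and it is precisely what rules out the pathological \si\ functions constructed in Lemma~\ref{lemma-uncountable}.

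For $\mathrm{(iii)}\Rightarrow\mathrm{(i)}$, I would look at $\sublevel_{\f,0}=\{y:\f(y)\leq 0\}$. If it contained some $y_0\neq 0$, then scaling-invariance gives $\f(\rho y_0)\leq\f(\rho\cdot 0)=\f(0)=0$ for all $\rho>0$, so the whole ray $\R_{+}y_0\subset\sublevel_{\f,0}$, contradicting compactness; hence $\sublevel_{\f,0}=\{0\}$, i.e.\ $0$ is the unique global argmin.

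For $\mathrm{(i)}\Rightarrow\mathrm{(iii)}$, lower semi-continuity makes every $\sublevel_{\f,x}$ closed, so it suffices to prove boundedness. If $\f(x)=0$ then $\sublevel_{\f,x}=\{0\}$. If $\f(x)>0$ and there were $y_n$ with $\|y_n\|\to\infty$ and $\f(y_n)\leq\f(x)$, I would normalize $z_n=y_n/\|y_n\|$ on the unit sphere and extract a convergent subsequence $z_n\to z$ with $\|z\|=1$. Scaling-invariance turns $\f(\|y_n\|z_n)\leq\f(x)$ into $\f(z_n)\leq\f(x/\|y_n\|)$, and since $\f_x$ is continuous at $0$ the right-hand side tends to $\f(0)=0$; as $\f\geq 0$ this yields $\f(z_n)\to 0$, whence by lower semi-continuity $\f(z)\leq\liminf_n\f(z_n)=0$, contradicting $\f(z)>0$ since $z\neq 0$. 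Thus $\sublevel_{\f,x}$ is bounded and closed, hence compact, and the four implications close both equivalences.
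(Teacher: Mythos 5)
Your proof is correct, and it takes a partly different route from the paper's. The paper closes the single cycle (i) $\Rightarrow$ (ii) $\Rightarrow$ (iii) $\Rightarrow$ (i): for (i) $\Rightarrow$ (ii) it first proves injectivity of $\f_{x}$ by the same scaling-toward-zero argument you use, and then globalizes by combining ``continuous injective on $[0,\epsilon]$ implies strictly monotonic'' with Lemma~\ref{lemma-monotone}; for (ii) $\Rightarrow$ (iii) it picks $s$ minimizing $\f$ on $\S_{1}$ (lower semi-continuity on a compact set), shows $\sublevel_{\f,s}\subset\overline{\B}_{1}$ using monotonicity along rays, and transfers compactness to every $\sublevel_{\f,x}$ via \eqref{compact-lines} and an intermediate-value-theorem step on $\f_{x/\|x\|}$. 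You instead prove (i) $\Leftrightarrow$ (ii) and (i) $\Leftrightarrow$ (iii) separately: your (i) $\Rightarrow$ (ii) disposes of the case $g(s)>g(t)$ by a second scaling iteration (the increasing positive sequence $g(\lambda^{-k}u)$ forced to converge to $0$), which avoids Lemma~\ref{lemma-monotone} and in fact uses only continuity of $\f_{x}$ at $0$ rather than on a neighborhood; your (i) $\Rightarrow$ (iii) proves boundedness by normalizing a putative unbounded sequence onto $\S_{1}$, extracting a convergent subsequence and applying lower semi-continuity at the limit, a Bolzano--Weierstrass argument that replaces the paper's minimum-on-the-sphere plus IVT transfer. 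The steps (ii) $\Rightarrow$ (i) and (iii) $\Rightarrow$ (i) coincide with the paper's. The paper's route is more constructive (it identifies a sublevel set contained in the unit ball and reuses its one-dimensional lemma and \eqref{compact-lines}), while yours is more self-contained and shows the continuity hypothesis is only needed at $0$. One cosmetic remark: when you write ``not strictly increasing gives $0<s<t$ with $g(s)\geq g(t)$'', you should note that $s=0$ is excluded because $g(0)=0<g(t)$ for $t>0$.
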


\begin{proof} 
Since, w.l.o.g., \f\ is given such that $\f(0)=0$, its unique global argmin is $0$ if and only if $\f(x) > 0$ for all $x\not=0$.
Now we prove first that \emph{{\rm (i)} $\Rightarrow$ {\rm (ii)}:} Let $x \in \R^{\n}\backslash\acco{0}$. Assume (by contraposition) that there exists $0 < t_{1} < t_{2}$ such that $\f_{x}(t_{1}) = \f_{x}(t_{2})$. Then by scaling-invariance, $\f_{x}(1) = \f_{x}\left(\frac{t_{1}}{t_{2}}\right)$. It follows by multiplying iteratively by $\frac{t_{1}}{t_{2}}$ that for all $k\in\mathbb{Z}_{+}$,
$\f_{x}(1) = \f_{x}\pare{\left(\frac{t_{1}}{t_{2}}\right)^{k}}$. Therefore if we take the limit when $k \to \infty$, it follows thanks to the continuity of $\f_{x}$ at $0$ that $\f( x) = \f_{x}(1) = \f(0)$, which contradicts the assumption ${\rm (i)}$. Hence, $\f_{x}$ is an injective function. Plus, there exists $\epsilon > 0$ such that $\f_{x}$ is continuous on $[0, \epsilon]$. Therefore $\f_{x}$ is an injective continuous function on $[0, \epsilon]$, which implies that $\f_{x}$ is a strictly monotonic function on $[0, \epsilon]$. Lemma~\ref{lemma-monotone} implies therefore that $\f_{x}$ is strictly monotonic. And since $0$ is an argmin of $\f_{x}$, then $\f_{x}$ is strictly increasing.

\emph{{\rm (ii)} $\Rightarrow$ {\rm (iii)}:} $\f$ is lower semi-continuous on the compact $\S_{1}$, then it reaches its minimum on that compact: there exists $s \in \S_{1}$ such that $ \f(s) = \dsp\min_{z\in\S_{1}} \f(z)$. Also, since sublevel sets of lower semi-continuous functions are closed, then $\sublevel_{\f,s}$ is closed. Now let us show that it is also bounded.

 If $y \in \sublevel_{\f,s}\backslash\acco{0}$, then $\f(y) \leq \f(s) \leq \f \left( \frac{y}{\|y\| } \right)$. And since $\f_{y}$ is strictly increasing, we obtain that $1 \leq \frac{1}{\|y\| }$, thereby $\|y\| \leq 1$. We have shown that $\sublevel_{\f,s} \subset \overline{\B}_{1}$. 
 
 Then $\sublevel_{\f,s}$ is a compact set, as it is a closed and bounded subset of $\R^{\n}$. By \eqref{compact-lines}, it follows that $\sublevel_{\f, t s}$ is compact for all $t > 0$. 

For all $x\in\R^{\n}\backslash{\acco{0}}$, $\f(x) > \f(0)$ thanks to ${\rm (ii)}$. Then $\sublevel_{\f,0} = \acco{0}$ and is compact. 

Let $x\in\R^{\n}\backslash\acco{0}$. Then there exists $\epsilon > 0$ such that $\f_{\frac{x}{\|x\| } }$ is continuous on $[0, \epsilon]$. We have that $\f_{\frac{x}{\|x\| } }(0) = 0 < \f(s) \leq \f_{\frac{x}{ \|x\| } }(1)$, then by scaling-invariance, $\f_{\frac{x}{ \|x\| } }(0) <  \f(\epsilon s) \leq  \f_{\frac{x}{ \|x\| } }(\epsilon)$. Therefore by the intermediate value theorem applied to $\f_{\frac{x}{\|x\| } }$ continuous on $[0, \epsilon]$, there exists $t \in (0, \epsilon]$ such that $f_{\frac{x}{\|x\| }}(t) = \f(\epsilon s)$.
Then $\sublevel_{\f, t\frac{x}{\|x\| }} = \sublevel_{\f, \epsilon s}$ and is compact. We apply again \eqref{compact-lines} to observe that $\sublevel_{\f, x}$ is compact.

\emph{{\rm (iii)} $\Rightarrow$ {\rm (i)}:} Let $x\in\sublevel_{\f, 0}$, then $t x\in\sublevel_{\f, 0}$ for all $t \geq 0$, and then $\acco{t x}_{t \in \R_{+}} \subset \sublevel_{\f, 0}$ which is a compact set. This is only possible if $x = 0$, otherwise the set $\acco{t x}_{t \in \R_{+}}$ would not be bounded. Hence $\sublevel_{\f, 0} = \acco{0}$ which implies that $0$ is the unique global argmin.
\qed \end{proof}
We derive from Theorem~\ref{equivalences} the next corollary, stating\del{ that for a continuously differentiable} \del{when }for a lower semi-continuous \si\ function with a unique global argmin\nnew{, when} the intersection of any half-line of origin $0$ and a level set is a singleton.
\begin{corollary} Let $\f$ be a lower semi-continuous\del{continuously differentiable} \si\ function with $0$ as unique global argmin.
Assume that for all $x\in \R^{\n}$, $\f_{x}$ is continuous on a neighborhood of $0$, and the $\f_{x}$ share the same image.
Then for all $x \in \R^{\n}$, any half-line of origin $0$ intersects $\level_{\f, x}$ at a unique point.
\label{unique-intersection-level-sets}
\end{corollary}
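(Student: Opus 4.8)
The plan is to reduce the statement to the one-dimensional sections $\f_v$. A half-line of origin $0$ has the form $H=\acco{t v; t\geq 0}$ for some $v\in\R^{\n}\setminus\acco{0}$, and for $t\geq 0$ one has $t v\in\level_{\f,x}$ if and only if $\f_v(t)=\f(x)$; so the task is to show that, for every $x\in\R^{\n}$, the equation $\f_v(t)=\f(x)$ has exactly one solution $t\geq 0$. The main input is Theorem~\ref{equivalences}: since $\f$ is lower semi-continuous, admits $0$ as its unique global argmin, and each $\f_x$ is continuous on a neighborhood of $0$, the equivalence {\rm (i)}~$\Leftrightarrow$~{\rm (ii)} there yields that $\f_v$ is strictly increasing for every $v\neq 0$.

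First I would treat $x=0$ separately: since $0$ is the \emph{unique} global argmin and $\f(0)=0$, we have $\f(y)>0$ for all $y\neq 0$, whence $\level_{\f,0}=\acco{0}$; as $0\in H$, the intersection $H\cap\level_{\f,0}$ is the single point $0$. For $x\neq 0$ we have $\f(x)>0$. Uniqueness is immediate: $\f_v$ being strictly increasing is injective, so $\f_v(t)=\f(x)$ has at most one solution. Existence is where the hypothesis that the $\f_x$ share a common image enters: writing $\f(x)=\f_x(1)\in\image(\f_x)=\image(\f_v)$, there is some $t\geq 0$ with $\f_v(t)=\f(x)$, i.e.\ $t v\in H\cap\level_{\f,x}$. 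Hence $H\cap\level_{\f,x}=\acco{t v}$, a singleton.

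I do not expect a genuine obstacle here; the proof is a short assembly of Theorem~\ref{equivalences} with the shared-image hypothesis. The two points that need care are (a) recognizing that ``the $\f_x$ share the same image'' has to be understood for $x\neq 0$ only, since $\f_0\equiv 0$ has image $\acco{0}$ and the literal reading would force $\f\equiv 0$, contradicting the uniqueness of the global argmin; and (b) checking that the hypotheses of Theorem~\ref{equivalences} are precisely those assumed, so that the ``unique global argmin'' condition can be traded for strict monotonicity of all the $\f_v$, $v\neq 0$, which is what makes both uniqueness (injectivity) and the reduction to the common image work.
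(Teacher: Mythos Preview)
Your proposal is correct and follows essentially the same route as the paper: invoke Theorem~\ref{equivalences} to obtain strict monotonicity of each $\f_v$ for $v\neq 0$, use injectivity for uniqueness, use the shared-image hypothesis for existence, and handle $\level_{\f,0}=\acco{0}$ separately. Your observation in point (a), that the shared-image assumption must be read for non-zero $x$ only, is a genuine clarification the paper also silently adopts in its proof.
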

\begin{proof}
For all non-zero $x$, Theorem~\ref{equivalences} ensures that $\f_{x}$ is strictly increasing. Therefore for a non-zero $x$, $\f_{x}$ is injective. And then the intersection of a level set and a half-line of origin $0$ contains at most one point. In addition the $\f_{x}$ share the same image for all non-zero $x$. Then for two non-zero vectors $x, y$, there exists $t \geq 0$ such that $\f_{y}(t) = \f_x(1)$. In other words, there exists $t \geq 0$ such that $t y \in \level_{\f, x}$. We end this proof by noticing that $\level_{\f, 0} = \acco{0}$ and then intersects any half-line of origin 0 only at $0$.
\qed
\end{proof}
\subsection{Sufficient Condition for Lebesgue Negligible Level Sets}
\label{negligible-level-sets}

We assume that $\f$ is lower semi-continuous \si\ admitting a unique global argmin and all $f_x$ are continuous and prove that $\f$ has  Lebesgue negligible level sets.

\begin{proposition}
Let $\f$ be an \si\ function with $0$ as unique global argmin. 
Assume\del{also} that $\f$ is lower semi-continuous and for all $x\in \R^{\n}$, $\f_{x}$ is continuous.
Then the level sets of $\f$ are Lebesgue negligible.
\label{zeroLevel}
\end{proposition}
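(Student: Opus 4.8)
The plan is to reduce the assertion to a one-dimensional fact along each ray through the origin and then integrate in polar coordinates. First I would invoke Theorem~\ref{equivalences}: the standing hypotheses---$\f$ lower semi-continuous, each $\f_x$ continuous (hence continuous on a neighborhood of $0$), and $0$ the unique global argmin---imply, by the equivalence stated there, that $\f_x$ is strictly increasing, in particular injective on $[0,\infty)$, for every $x\neq 0$. Consequently, for any $c\in\R$ and any unit vector $u\in\S_1$, the set $\{t\ge 0:\ \f(tu)=c\}=\{t\ge 0:\ \f_u(t)=c\}$ has at most one element, and is therefore a Lebesgue-null subset of $\R$.

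Next I would dispatch the measurability bookkeeping. A lower semi-continuous function is Borel, so $\{\f\le c\}$ is closed, $\{\f<c\}=\bigcup_{k\ge 1}\{\f\le c-1/k\}$ is Borel, and hence each level set $\level_{\f,x}=\{\f\le\f(x)\}\setminus\{\f<\f(x)\}$ is Borel, in particular Lebesgue measurable. Then I would apply the decomposition of Lebesgue measure in spherical coordinates: for every Borel $A\subset\R^{\n}$,
\[
\mathrm{Leb}(A)=\int_{\S_1}\!\!\int_0^{\infty}\mathbf{1}_A(tu)\,t^{\n-1}\,\diff t\,\diff\sigma(u),
\]
with $\sigma$ the surface measure on $\S_1$ (for $\n=1$ the outer integral is a sum over $u\in\{-1,+1\}$). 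Taking $A=\level_{\f,x}$, the inner integral is, up to the weight $t^{\n-1}$, the Lebesgue measure of $\{t>0:\ \f_u(t)=\f(x)\}$, which is null by the first paragraph; hence the inner integral vanishes for every $u$ and $\mathrm{Leb}(\level_{\f,x})=0$.

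The only step with genuine content is the appeal to Theorem~\ref{equivalences} for the strict monotonicity of the $\f_x$; this is exactly where the hypothesis that $0$ is the unique global argmin is indispensable, since without it a level set need not be negligible (already $\f\equiv 0$ has $\level_{\f,x}=\R^{\n}$). Everything after that is a routine Tonelli argument in polar coordinates, so I do not anticipate a real obstacle. Equivalently, one could avoid the spherical-coordinate formula: the preimage of $\level_{\f,x}$ under the $C^\infty$ diffeomorphism $(t,u)\mapsto tu$ from $(0,\infty)\times\S_1$ onto $\R^{\n}\setminus\{0\}$ has every $u$-section at most a singleton, hence is null by Fubini, and diffeomorphisms preserve null sets.
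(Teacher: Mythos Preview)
Your proof is correct and takes a genuinely different route from the paper's. You slice the level set by rays, observe via Theorem~\ref{equivalences} that each radial section $\{t\ge 0:\f_u(t)=\f(x)\}$ is at most a singleton, and then integrate in polar coordinates; this uses from Theorem~\ref{equivalences} only the strict monotonicity of the $\f_x$. The paper instead exploits the scaling identity $\mu(\level_{\f,tx})=t^{\n}\mu(\level_{\f,x})$ together with \emph{both} conclusions of Theorem~\ref{equivalences}: since $\f_x$ is strictly increasing, the level sets $\level_{\f,(1+1/k)x}$ for $k\ge 1$ are pairwise disjoint and each has measure at least $\mu(\level_{\f,x})$, and since sublevel sets are compact they all sit inside the finite-measure set $\sublevel_{\f,2x}$; summing yields $\sum_{k\ge1}\mu(\level_{\f,x})\le\mu(\sublevel_{\f,2x})<\infty$, forcing $\mu(\level_{\f,x})=0$. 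Your argument is more direct and more economical in its appeal to Theorem~\ref{equivalences}; the paper's argument avoids the polar-coordinate formula and instead foregrounds the scaling structure itself.
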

\begin{proof} Let $x \in \R^{\n}$. Let us denote by $\mu$ the Lebesgue measure. For all $t > 0$, $\mu\pare{\mathcal{L}_{\f,tx}} = \mu\pare{t \mathcal{L}_{\f,x}} = t^{\n}\mu\pare{\mathcal{L}_{\f,x}}$, thanks to \eqref{homothetic}. Therefore, 
$\text{ if $t \geq 1$, } \mu\pare{\mathcal{L}_{\f,tx}} \geq \mu\pare{\mathcal{L}_{\f,x}}. $
In addition, for all $k \geq 1$,
$
\dsp \mathcal{L}_{\f,(1+\frac{1}{k})x} \subset \acco{ y \in \R^{\n}, \f(x) \leq \f(y) \leq \f(2x)} \subset \sublevel_{\f,2x}\del{,}
$
because if $x \neq 0$, $\f_{x}$ is strictly increasing thanks to Theorem \ref{equivalences}. And the same theorem induces that $\sublevel_{\f,x}$ is compact and hence $\mu\pare{\sublevel_{\f,x}} < \infty $. It follows that
$ 
\dsp \sum_{k=1}^{\infty} \mu\pare{ \mathcal{L}_{\f,x} } \leq \sum_{k=1}^{\infty} \mu\pare{ \mathcal{L}_{\f,(1+\frac{1}{k})x} } \leq \mu\pare{ \sublevel_{\f,2x} } < \infty.
$
Hence, $\mu\pare{\mathcal{L}_{\f,x} } = 0$.
\qed \end{proof}

\subsection{Balls Containing and Balls Contained in Sublevel Sets}
\label{elitist-es}

The sublevel sets  of continuous \ph\ functions include and are embedded in balls whose construction is scaling-invariant. Given that continuous \si\ functions are monotonic transformation of \ph\ functions, those properties are naturally transferred to \si\ functions. This is what we formalize in this section.

From the definition of a \ph\ function with degree $\alpha$, for all $x \neq 0$ we have 
$p(x) = \dsp \| x \|^{\alpha} p\pare{{x}/{ \| x \| } }$ for all $x \neq 0$.
Therefore, $p$ is continuous on $\R^{\n}\setminus \acco{0}$ if and only if $p$ is continuous on $\S_{1}$. For such $p$, we denote
$
m_{p} = \dsp \min_{x\in \S_{1}} p(x),
$
and
$
M_{p} = \dsp \max_{x\in \S_{1}} p(x).
$
We have the following propositions:

\begin{proposition}[\nnew{Lemma 2.8 in \cite{auger2013linear}}]
Let $p$ be a \ph\ function with degree $\alpha$ such that $p(x) > 0$ for all $x \neq 0$. Assume that $p$ is continuous on $\S_{1}$, then for all $x \neq 0$, the following holds
\begin{align}
\| x \| m_{p}^{1/\alpha} \leq p(x)^{1/\alpha} \leq \| x \| M_{p}^{1/\alpha} \enspace.
\end{align}
\label{boundpositive}
\end{proposition}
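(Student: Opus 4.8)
The plan is to exploit directly the scaling formula for positively homogeneous functions together with the compactness of the unit sphere. First I would recall that for every $x \neq 0$ we can write $x = \|x\|\,\pare{x/\|x\|}$ with $x/\|x\| \in \S_{1}$, so that by the degree-$\alpha$ positive homogeneity of $p$,
\begin{equation*}
p(x) = \|x\|^{\alpha}\, p\pare{x/\|x\|}
\enspace.
\end{equation*}
Since $p$ is continuous on the compact set $\S_{1}$, the quantities $m_{p} = \min_{z\in\S_{1}}p(z)$ and $M_{p} = \max_{z\in\S_{1}}p(z)$ are well defined and attained; moreover the hypothesis $p(z) > 0$ for all $z \neq 0$ forces $m_{p} > 0$, which is what makes the $\alpha$-th roots below legitimate.

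Next I would simply bound $p\pare{x/\|x\|}$ between $m_{p}$ and $M_{p}$, which yields
\begin{equation*}
\|x\|^{\alpha}\, m_{p} \leq p(x) \leq \|x\|^{\alpha}\, M_{p}
\enspace.
\end{equation*}
Finally, applying the map $t \mapsto t^{1/\alpha}$, which is strictly increasing on $\R_{+}$ (here $\alpha > 0$), to all three nonnegative terms preserves the inequalities and gives
\begin{equation*}
\|x\|\, m_{p}^{1/\alpha} \leq p(x)^{1/\alpha} \leq \|x\|\, M_{p}^{1/\alpha}
\enspace,
\end{equation*}
which is exactly the claimed estimate.

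There is essentially no serious obstacle here: the only points requiring a word of justification are that $m_{p} > 0$ (so that $m_{p}^{1/\alpha}$ is a genuine positive real and the left inequality is nonvacuous), which follows from continuity plus positivity on the compact $\S_{1}$, and that the $\alpha$-th root is order-preserving on $\R_{+}$. Both are immediate, so the proof is a short two-line computation once the scaling identity $p(x) = \|x\|^{\alpha} p(x/\|x\|)$ is invoked.
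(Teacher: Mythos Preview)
Your argument is correct and is exactly the natural one: the paper itself does not give a proof of this proposition (it is quoted from \cite{auger2013linear}), but the scaling identity $p(x) = \|x\|^{\alpha}\, p(x/\|x\|)$ that you use is precisely what the paper records immediately before stating the result, and bounding $p(x/\|x\|)$ by $m_{p}$ and $M_{p}$ followed by taking $\alpha$-th roots is the only thing left to do.
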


\begin{proposition}[\nnew{Lemma 2.9 in \cite{auger2013linear}}]
Let $p$ be a \ph\ function with degree $\alpha$ such that $g(x) > 0$ for all $x \neq 0$. Assume that $p$ is continuous on $\S_{1}$.
Then for all $\rho > 0$, the ball centered in $0$ and of radius $\rho$ is included in the sublevel set of degree $\rho^{\alpha} M_{p}$, \ie,
$
\B\pare{0, \rho} \subset \sublevel_{p, \rho x_{M_{p}}}, \text{ with } p(x_{M_{p}}) = M_{p}. 
$
For all $x \neq 0$, the sublevel set of degree $p(x)$ is included into the ball centered in $0$ and of radius $(p(x)/m_{p})^{\alpha}$, \ie,
\begin{equation*}
\sublevel_{p, x } \subset \B\pare{0, \pare{\frac{p(x)}{m_{p}}}^{\alpha}} .
\end{equation*}
\label{ballincluded}
\end{proposition}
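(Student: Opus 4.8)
The plan is to read off both inclusions as one-line rearrangements of the two-sided estimate of Proposition~\ref{boundpositive}, namely $\|x\|\,m_{p}^{1/\alpha}\le p(x)^{1/\alpha}\le\|x\|\,M_{p}^{1/\alpha}$ for $x\neq 0$. First I would record three elementary preliminaries: $p(0)=0$ (since $p(\rho\cdot 0)=\rho^{\alpha}p(0)$ for any $\rho\neq 1$ forces $p(0)=0$); the quantities $m_{p}$ and $M_{p}$ are well defined with $0<m_{p}\le M_{p}<\infty$ and a maximiser $x_{M_{p}}$ with $p(x_{M_{p}})=M_{p}$ exists, because $p$ is continuous and strictly positive on the compact set $\S_{1}$; and $p(\rho x_{M_{p}})=\rho^{\alpha}M_{p}$ by positive homogeneity, so the sublevel set appearing in the first claim is exactly $\acco{y:p(y)\le\rho^{\alpha}M_{p}}$.

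For the inclusion $\B\pare{0,\rho}\subset\sublevel_{p,\rho x_{M_{p}}}$, I would fix $\rho>0$ and $y\in\B\pare{0,\rho}$ and split on whether $y=0$. If $y=0$ then $p(y)=0\le\rho^{\alpha}M_{p}=p(\rho x_{M_{p}})$; if $y\neq 0$, the upper bound in Proposition~\ref{boundpositive} gives $p(y)^{1/\alpha}\le\|y\|\,M_{p}^{1/\alpha}<\rho\,M_{p}^{1/\alpha}$, hence $p(y)<\rho^{\alpha}M_{p}=p(\rho x_{M_{p}})$. In either case $y\in\sublevel_{p,\rho x_{M_{p}}}$.

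For the second inclusion I would fix $x\neq 0$ and $y\in\sublevel_{p,x}$, so $p(y)\le p(x)$; the case $y=0$ is immediate since the relevant radius is positive ($p(x)>0$, $m_{p}>0$), and for $y\neq 0$ the lower bound in Proposition~\ref{boundpositive} yields $\|y\|\,m_{p}^{1/\alpha}\le p(y)^{1/\alpha}\le p(x)^{1/\alpha}$, whence $\|y\|\le\pare{p(x)/m_{p}}^{1/\alpha}$ and $y$ lies in the stated ball about the origin. I do not expect a genuine obstacle here: all the substance is already packaged in Proposition~\ref{boundpositive}, and the only points needing a word of care are the degenerate case $y=0$ and the well-definedness of $x_{M_{p}}$, $m_{p}$ and $M_{p}$ — which is precisely where continuity of $p$ on $\S_{1}$ and positivity of $p$ away from the origin are used.
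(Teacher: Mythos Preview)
Your argument is correct and is exactly the natural route via Proposition~\ref{boundpositive}. Note, however, that the paper does not supply its own proof of this proposition: it is quoted as Lemma~2.9 of \cite{auger2013linear}, so there is no in-paper proof to compare against.

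One point worth flagging: your computation for the second inclusion yields $\|y\|\le\pare{p(x)/m_{p}}^{1/\alpha}$, whereas the statement as printed gives the radius as $\pare{p(x)/m_{p}}^{\alpha}$. Your exponent $1/\alpha$ is the correct one (a sanity check with $p(x)=\|x\|^{2}$, $\alpha=2$, $m_{p}=1$ confirms this), and the $\alpha$ in the displayed statement is a typo; the only subsequent use of this proposition in the paper is in the proof of Proposition~\ref{boundballincludedscaling}, where $\alpha=1$ and the two exponents coincide, which is presumably why the slip went unnoticed. You should state explicitly that you obtain $1/\alpha$ rather than writing that $y$ ``lies in the stated ball'', since as written that sentence asserts something false.
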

We can generalize both propositions to continuous scaling-invariant functions using Theorem~\ref{continuousGeneral}.

\begin{proposition}
Let $\f$ be a continuous \si\ function such that $\f(x) > 0$ for all $x \neq 0$. Then there exist an increasing homeomorphism $\varphi$ on $\R_{+}$ and two positive numbers $0< m \leq M$ such that
\begin{itemize}

\item[{\rm (i)}] for all $x \neq 0$, $
\varphi\pare{ m\|x\|} \leq \f(x) \leq \varphi\pare{M\|x\|}\del{,}$\nnew{;}
\item[{\rm (ii)}] for all $\rho > 0$, the ball centered in $0$ and of radius $\rho$ is included in the sublevel set of degree $\varphi( \rho \varphi^{-1}(M))$, \ie, 
$
\B\pare{0, \rho} \subset \sublevel_{\f, \rho x_{M}}$ with $\f(\rho x_{M}) = \varphi \pare{ \rho \varphi^{-1}(M) }$\del{.}\nnew{;}

\item[{\rm (iii)}] for all $x \neq 0$, the sublevel set of degree $\f(x)$ is included into the ball centered in $0$ and of radius $\dsp\frac{\varphi^{-1}\pare{\f(x)}}{\varphi^{-1}(m)}$, \ie, 
\begin{align}
\sublevel_{\f, x } \subset \B\pare{0, \frac{\varphi^{-1}\pare{\f(x)} }{\varphi^{-1}(m)}}
\enspace.
\end{align}
\end{itemize}
\label{boundballincludedscaling}
\end{proposition}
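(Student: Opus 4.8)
The plan is to deduce the proposition from the already-established versions for positively homogeneous functions, Propositions~\ref{boundpositive} and~\ref{ballincluded}, by passing through the decomposition $\f=\varphi\circ p$ of Theorem~\ref{continuousGeneral}. Since $\f$ is continuous, \si, and $\f(x)>\f(0)=0$ for every $x\neq 0$, the origin is a global minimum of $\f$, so Theorem~\ref{continuousGeneral} applies in its ``global optimum'' case: with $\alpha=1$ it produces a continuous \phone\ function $p\geq 0$ and a strictly increasing homeomorphism $\varphi\colon\R_+\to\textrm{Im}(\f)$ with $\f=\varphi\circ p$. Because $\varphi$ is strictly increasing and $\varphi(0)=\f(0)=0$, the strict positivity of $\f$ off the origin transfers to $p$, i.e.\ $p(x)>0$ for all $x\neq 0$; in particular $p$ is continuous and strictly positive on the compact sphere $\S_1$, so $m_p:=\min_{\S_1}p>0$ and $M_p:=\max_{\S_1}p$ are well defined and finite and $p$ satisfies the hypotheses of Propositions~\ref{boundpositive} and~\ref{ballincluded} with degree $\alpha=1$. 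Finally, $\varphi$ being strictly increasing, $\f$ and $p$ have the same sublevel sets, $\sublevel_{\f,x}=\sublevel_{p,x}$ for every $x$ (cf.\ \eqref{invariance-strict-increasing}), which I would use freely to turn $p$-statements into $\f$-statements.

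I would then take $m$ and $M$ to be the positive numbers with $\varphi^{-1}(m)=m_p$ and $\varphi^{-1}(M)=M_p$, that is $m:=\varphi(m_p)\leq\varphi(M_p)=:M$, both lying in $\textrm{Im}(\f)$. For~(i): Proposition~\ref{boundpositive} with $\alpha=1$ gives $m_p\|x\|\leq p(x)\leq M_p\|x\|$ for $x\neq 0$, and applying the increasing map $\varphi$ yields $\varphi\!\pare{\varphi^{-1}(m)\|x\|}\leq\f(x)\leq\varphi\!\pare{\varphi^{-1}(M)\|x\|}$. For~(ii): Proposition~\ref{ballincluded} gives, for the point $x_{M_p}$ with $p(x_{M_p})=M_p$, the inclusion $\B(0,\rho)\subset\sublevel_{p,\rho x_{M_p}}$; since $\sublevel_{p,\rho x_{M_p}}=\sublevel_{\f,\rho x_{M_p}}$ and $\f(\rho x_{M_p})=\varphi\!\pare{p(\rho x_{M_p})}=\varphi(\rho M_p)=\varphi\!\pare{\rho\varphi^{-1}(M)}$, the choice $x_M:=x_{M_p}$ settles it. For~(iii): Proposition~\ref{ballincluded} gives $\sublevel_{p,x}\subset\B\!\pare{0,\,p(x)/m_p}$ for $x\neq 0$; rewriting $p(x)=\varphi^{-1}(\f(x))$ and $m_p=\varphi^{-1}(m)$ and using $\sublevel_{\f,x}=\sublevel_{p,x}$ gives $\sublevel_{\f,x}\subset\B\!\pare{0,\,\varphi^{-1}(\f(x))/\varphi^{-1}(m)}$.

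Essentially all the mathematical content is imported from Theorem~\ref{continuousGeneral} and Propositions~\ref{boundpositive}--\ref{ballincluded}; the step that demands the most care is the bookkeeping of the constants --- keeping straight which quantities are values of the homogeneous function $p$ and which are values of $\f$, since the radii and level values produced by Proposition~\ref{ballincluded} are naturally written through $p=\varphi^{-1}\circ\f$, so that $m$ and $M$ must be taken as the $\varphi$-images of the sphere-extrema of $p$. A minor technical wrinkle is the distinction between open and closed balls: Proposition~\ref{ballincluded} is already phrased with open balls, so the inclusions in~(ii) and~(iii) are inherited verbatim, and if a strict inclusion is ever needed it suffices to shrink $m$ (resp.\ enlarge $M$) slightly within $\textrm{Im}(\f)$, which only weakens the requested inclusions and leaves~(i) intact.
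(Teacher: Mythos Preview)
Your proof is correct and follows essentially the same route as the paper: both invoke Theorem~\ref{continuousGeneral} to write $\f=\varphi\circ p$ with $p$ continuous \phone\ and $\varphi$ an increasing homeomorphism, set $m=\varphi(m_p)$ and $M=\varphi(M_p)$, and then translate Propositions~\ref{boundpositive} and~\ref{ballincluded} for $p$ into statements for $\f$ via the invariance of sublevel sets under $\varphi$. Your bookkeeping of the constants is exactly the paper's, and your last paragraph on open versus closed balls is harmless extra care.
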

\begin{proof}
Thanks to Theorem~\ref{continuousGeneral}, we can write $\f = \f_{x_{0}}\circ p$ where $x_{0} \neq 0$, $p$ \nnew{is} \phone\ \nnew{and} $\varphi$ defined as $\f_{x_{0}}$ is an increasing homeomorphism. Then $p = \varphi^{-1}\circ \f$ and hence verifies: for all $x \neq 0$, $p(x) > 0$.
Define\del{ $m$ and $M$ as}
$
m = \varphi(m_{p}) \text{ and } M = \varphi(M_{p}),
$
where $m_{p} = \dsp \min_{x\in \S_{1}} p(x)$ and $M_{p} = \dsp \max_{x\in \S_{1}} p(x)$.

For $x \neq 0$, Proposition \ref{boundpositive} ensures that
$
m_{p} \| x \| \leq p(x) \leq M_{p} \| x \|.
$
Taking the image of this equation with respect to $\varphi$ proves ${\rm (i)}$.

For all $\rho > 0$,  
$\B\pare{0, \rho} \subset \sublevel_{p, \rho x_{M_{p}}}$, with $p(x_{M_{p}}) = M_{p}$. 
Since sublevel sets are invariant with respect to an increasing bijection, it follows that $\sublevel_{p, \rho x_{M_{p}}} = \sublevel_{\f, \rho x_{M_{p}}}$. In addition,
$\f(\rho x_{M_{p}}) =  \varphi(p(\rho x_{M_{p}})) = \varphi(\rho p(x_{M_{p}})) = \varphi( \rho \varphi^{-1}(M))$ such that we have proven ${\rm (ii)}$.

Let $x \neq 0$. Again by invariance of the sublevel set, $\sublevel_{p, x} = \sublevel_{\f, x}$. And Proposition \ref{ballincluded} says that 
$
\sublevel_{p, x } \subset \B\pare{0, \frac{p(x)}{m_p}}$.
We obtain the results with the facts that $p = \varphi^{-1}\circ \f$ and $m_{p} = \varphi^{-1}(m)$.
\qed \end{proof}

\subsection{A Generalization of a Weak Formulation of Euler's Homogeneous Function Theorem}
\label{generalEuler}

For a function $p: \R^{\n} \to \R$ continuously differentiable on  $\R^{\n}\setminus \acco{0}$, Euler's homogeneous function theorem states that there is equivalence between $p$ is \ph\ with degree $\alpha$ and for all $x \neq 0$
\begin{equation}\label{eq:euler}
\alpha p(x) = \nabla p(x) \cdot x
\enspace.
\end{equation}
If in addition $p$ is continuously differentiable in zero\del{on $\R^{\n}$}, then $\alpha p(0) = 0 = \nabla p(0) \cdot 0$.
Along with~\eqref{eq:euler}, this latter equation implies that at each point $y$ of a level set $\level_{p,x}$, the scalar product between $\nabla p (y)$ and $y$ is constant equal to $\nabla p(x) \cdot x$ or that the level sets of $p$ and of the function $x \mapsto \nabla p(x) \cdot x$ are the same, that is, the level sets of a continuously differentiable \ph\ function satisfy
\begin{align}
\level_{p, x} = \level_{z \mapsto \nabla p(z) \cdot z,x} = \acco{y \in \R^{\n}, \nabla p(y) \cdot y = \nabla p(x) \cdot x }
\enspace.
\label{levelset-homogeneous}
\end{align}
We call this a weak formulation of Euler's homogeneous function theorem.

If $f$ is a continuous \si\ function, we can write $f$ as $\varphi \circ p$ where $p$ is \ph\ and $\varphi$ is a homeomorphism, according to Theorem~\ref{continuousGeneral}. We have the following proposition in the case where $\varphi$ and $p$ are also continuously differentiable.

\begin{proposition}
Let $\f$ be a continuously differentiable \si\ function that can be written as $\varphi\circ p$ where $p$ is $\ph_{\alpha}$, $\varphi$ is a homeomorphism, and $\varphi$ and $\varphi^{-1}$ are continuously differentiable (and thus $p$ is continuously differentiable). Then for all $x\in \R^{\n}$,
\begin{align}
\nabla f(x) \cdot x = \alpha \,\varphi^{\prime}(p(x)) \, p(x)
\enspace.
 \end{align}
 \label{general-euler-si}
\end{proposition}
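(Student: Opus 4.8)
The plan is to combine the chain rule with the weak formulation of Euler's homogeneous function theorem, namely~\eqref{eq:euler}. Since $f$ is continuously differentiable and $\varphi^{-1}$ is continuously differentiable, the composite $p=\varphi^{-1}\circ f$ is continuously differentiable on all of $\R^{\n}$ (this is the parenthetical remark in the hypothesis), so that~\eqref{eq:euler} applies to $p$ on $\R^{\n}\setminus\acco{0}$. Note also that, $p$ being positively homogeneous of degree $\alpha$, the relation $p(0)=\rho^{\alpha}p(0)$ for every $\rho>0$ forces $p(0)=0$.

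Next I would fix $x\in\R^{\n}$ and differentiate $f=\varphi\circ p$ with the chain rule, obtaining $\nabla f(x)=\varphi^{\prime}(p(x))\,\nabla p(x)$. Taking the scalar product with $x$ gives $\nabla f(x)\cdot x=\varphi^{\prime}(p(x))\,\bigl(\nabla p(x)\cdot x\bigr)$. For $x\neq 0$, Euler's theorem~\eqref{eq:euler} yields $\nabla p(x)\cdot x=\alpha\,p(x)$, hence $\nabla f(x)\cdot x=\alpha\,\varphi^{\prime}(p(x))\,p(x)$. For $x=0$ both sides vanish, the left-hand side because it is a scalar product with the zero vector and the right-hand side because $p(0)=0$; thus the identity holds for every $x\in\R^{\n}$.

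There is no real obstacle in this argument; the only subtle point is making sure $p$ inherits enough regularity to invoke~\eqref{eq:euler}, which is precisely why the statement assumes both $\varphi$ and $\varphi^{-1}$ to be continuously differentiable. The remainder is the routine computation above.
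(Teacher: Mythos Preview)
Your proof is correct and follows the same approach as the paper: establish that $p=\varphi^{-1}\circ f$ is continuously differentiable, apply the chain rule to $f=\varphi\circ p$, and invoke Euler's homogeneous function theorem for $\nabla p(x)\cdot x$. You are slightly more explicit about the case $x=0$, but otherwise the argument is identical.
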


\begin{proof}
Since $p = \varphi^{-1}\circ \f$, it is continuously differentiable. From the chain rule, for all $x \in \R^{\n}:$
$\nabla f(x) \cdot x = \varphi^{\prime}(p(x)) \nabla p(x) \cdot x = \alpha \varphi^{\prime}(p(x)) p(x)$.
The last equality results from the Euler's homogeneous theorem applied to $p$.
\qed \end{proof}

Yet, the assumptions of the previous proposition are not necessarily satisfied when $\f$ is a continuously differentiable SI function. Indeed, we exhibit in the next proposition an example of a\nnew{n} SI and continuously differentiable function $\f$ such that $\f =\varphi\circ p$ but either $p$ or $\varphi$ is non-differentiable.
\begin{proposition}
Define $\varphi: t \mapsto \dsp\int_{0}^{t} \frac{1}{1+\log^{2}(u)} \diff u$ on $\R_{+}$ and $p: x \mapsto \abs{x_{1}}$. Then $\f = \varphi \circ p$ is continuously differentiable and \si. 
Yet, for any $\tilde{\varphi}$ strictly increasing and $\tilde{p}$ PH such that $f = \tilde{\varphi} \circ \tilde{p}$ (including $\varphi$ and $p$ above), 
either $\tilde{p}$ is not differentiable on any point of the set $\acco{x; \,x_{1} = 0}$ or $\tilde{\varphi}$ is not differentiable at $0$.
\label{counterexample}
 \end{proposition}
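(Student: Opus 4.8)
The plan is to verify the two easy claims first --- that $\varphi$ is a well-defined strictly increasing function, that $p(x)=|x_1|$ is \phone, and that $\f=\varphi\circ p$ is continuously differentiable --- and then to prove the dichotomy by a rigidity argument using the uniqueness part of Theorem~\ref{equivalence-transformation}. First I would check that the integrand $u\mapsto 1/(1+\log^2 u)$ is continuous and strictly positive on $(0,\infty)$, bounded by $1$, with $\lim_{u\to 0^+}1/(1+\log^2 u)=0$; hence $\varphi$ is well-defined on $\R_+$, strictly increasing, and $\varphi'(t)=1/(1+\log^2 t)$ for $t>0$ with $\varphi'(t)\to 0$ as $t\to 0^+$, so in fact $\varphi$ is $C^1$ on $\R_+$ with $\varphi'(0)=0$. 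Since $p(x)=|x_1|$ is $\phone$ and $\f=\varphi\circ p$, $\f$ is \si; and $\f(x)=\varphi(|x_1|)$ is continuously differentiable on all of $\R^{\n}$ because near a point with $x_1\neq 0$ it is a composition of $C^1$ maps, while near a point with $x_1=0$ we have $\nabla\f(x)=\varphi'(|x_1|)\,\mathrm{sign}(x_1)\,e_1$ which tends to $0$ (using $\varphi'(0)=0$), so $\nabla\f$ exists and is continuous there with value $0$.

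Next I would pin down what all admissible decompositions $\f=\tilde\varphi\circ\tilde p$ look like. Since $\f$ is continuous and \si, Theorem~\ref{equivalence-transformation} (or Theorem~\ref{continuousGeneral}) applies, and its uniqueness clause says that any such pair $(\tilde\varphi,\tilde p)$ with $\tilde p$ \phone\ is obtained from $(\varphi,p)$ by left-composing $p$ with a piecewise-linear function; concretely, since $\image(p)=\R_+$ (here $\f$ has a global argmin at $0$, all non-constant $\f_x$ are increasing), $\tilde p=c\cdot p$ for some constant $c>0$ and $\tilde\varphi(t)=\varphi(t/c)$ on $\R_+$. So it suffices to show: for every $c>0$, the map $x\mapsto c\,|x_1|$ is nowhere differentiable on $\{x_1=0\}$ (which is immediate, as $t\mapsto c|t|$ has no derivative at $0$ for $c>0$) --- OR $\tilde\varphi=t\mapsto\varphi(t/c)$ is not differentiable at $0$. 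The point is that the first alternative already holds unconditionally: $\tilde p=c|x_1|$ is genuinely non-differentiable at every point of $\{x_1=0\}$ for every $c>0$, so the "either/or" in the statement is satisfied by its first branch for \emph{every} admissible $\tilde p$. Hence the theorem would follow once we know $\tilde p$ must be of the form $c\,p$.

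The main obstacle is therefore making the reduction "$\tilde p = c\cdot p$" airtight. The uniqueness statement in Theorem~\ref{equivalence-transformation} is phrased "up to a left composition of $p$ with a piece-wise linear function," and I would need to unwind this: if $\f=\varphi\circ p=\tilde\varphi\circ\tilde p$ with $\tilde p$ \phone\ and $\tilde\varphi$ strictly increasing, set $\psi=\tilde\varphi^{-1}\circ\varphi$ (well-defined on $\image(p)=\R_+$ since $\varphi$ is injective and $\image(\varphi)\subset\image(\tilde\varphi)$ --- here one should check the images match, which holds because all $\f_x$ with $x_1\neq 0$ have image $\image(\varphi)=\varphi(\R_+)$). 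Then the computation in the proof of Proposition~\ref{DarbouxGeneral}, $\psi(\lambda p(x))=\psi(p(\lambda x))=\tilde p(\lambda x)=\lambda\tilde p(x)=\lambda\psi(p(x))$ for $\lambda>0$, shows $\psi$ is \phone\ on $\R_+$, hence $\psi(t)=\psi(1)\,t=:c^{-1}t$ with $c:=1/\psi(1)>0$, giving $\tilde p=\psi\circ p=c^{-1}p$; rename $c^{-1}$ as $c$. I would then remark that allowing $\tilde\varphi$ merely strictly increasing (not a homeomorphism) does not enlarge the family, since $\psi$ being \phone\ and $\tilde\varphi$ strictly increasing still forces linearity; and if one insisted on handling a possibly decreasing $\tilde\varphi$ with $\tilde p$ allowed to change sign, the footnote after Theorem~\ref{equivalence-transformation} reduces it to the increasing case. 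Finally I would close by observing that for the specific $(\varphi,p)$ exhibited, $\varphi$ itself \emph{is} differentiable at $0$ (with $\varphi'(0)=0$), so in that decomposition it is precisely the first alternative --- $p=|x_1|$ non-differentiable on $\{x_1=0\}$ --- that occurs, illustrating that continuous differentiability of $\f$ genuinely cannot be transferred simultaneously to both factors.
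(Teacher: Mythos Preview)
Your proof has a genuine gap: you restrict to $\tilde p$ being \phone, whereas the proposition allows $\tilde p$ to be positively homogeneous of \emph{any} degree $\alpha>0$ (this is what ``PH'' without subscript means in the paper, and the paper's proof explicitly treats a general $\alpha$). Your rigidity argument via the uniqueness clause of Theorem~\ref{equivalence-transformation} is fine \emph{after} fixing the degree, but it only yields $\tilde p = c\,p^{\alpha}=c\,|x_1|^{\alpha}$ for the degree $\alpha$ in question, not $\tilde p = c\,|x_1|$. For $\alpha>1$ the function $x\mapsto c\,|x_1|^{\alpha}$ \emph{is} differentiable at every point of $\{x_1=0\}$, so your claim that ``the first alternative already holds unconditionally'' is false in exactly the case that makes the proposition interesting.

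The paper's proof runs the same rigidity computation you outline, but keeps $\alpha$ free: with $\psi=\tilde\varphi^{-1}\circ\varphi$ one gets $\psi(t)=\psi(1)\,t^{\alpha}$ on $\R_+$, hence (up to a positive constant) $\tilde p(x)=|x_1|^{\alpha}$ and $\tilde\varphi(t)=\varphi(t^{1/\alpha})$. If $\tilde p$ is to be differentiable on $\{x_1=0\}$ one is forced into $\alpha>1$, and then
\[
\tilde\varphi'(t)=\frac{1}{\alpha}\,\frac{t^{1/\alpha-1}}{1+\log^{2}(t^{1/\alpha})}
\]
diverges as $t\to 0^+$ (the factor $t^{1/\alpha-1}$ blows up and dominates the logarithmic denominator), so $\tilde\varphi$ is not differentiable at $0$. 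This computation for $\alpha>1$ is precisely the missing step that justifies the ``either/or'' formulation; without it, your argument covers only the trivial case $\alpha\le 1$.
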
 
\begin{proof}
Let us prove that $\f$ is continuously differentiable. For $x \neq 0$, $\nabla f(x) = \frac{1}{1+\log^{2}( \abs{x_{1}}  )}  \frac{x_{1}}{ \abs{x_{1}}} e_{1}$ where $e_{1}$ is the unit vector $\pare{1, 0, \dots, 0}$.
Then $\dsp\lim_{x\to 0} \nabla f(x)$ exists and is equal to $0$, hence $\f$ is continuously differentiable.

Assume that $\pare{\tilde{\varphi}, \tilde{p}}$ is such that $\varphi\circ p = \tilde{\varphi}\circ \tilde{p}$, with $\tilde{\varphi}$ strictly increasing and $\tilde{p}$ $\text{\rm PH}_{\alpha}$. Denote $\psi = \tilde{\varphi}^{-1}\circ \varphi$. For all $\lambda > 0$ and $x \in \R^{\n}$, $\psi(\lambda p(x)) = \psi(p(\lambda x)) = \tilde{p}(\lambda x) = \lambda^{\alpha} \psi(p(x))$. Therefore $\psi$ is $\text{\rm PH}_{\alpha}$ on $\image(p) = \R_{+}$, hence for all $t > 0$, $\psi(t) = t^{\alpha} \psi(1)$. Then up to a positive constant multiplicative factor, $\tilde{p}(x) = \abs{x_{1}}^{\alpha}$ and $\tilde{\varphi}(t) = \varphi(t^{1/\alpha})$. And then if $\tilde{p}$ is differentiable, we necessarily have that $\alpha > 1$. 

In the case where $\alpha > 1$, for all $t > 0$, $\tilde{\varphi}^{\prime}(t) = \frac{1}{\alpha} \frac{t^{\frac{1}{\alpha} - 1}}{1 + \log^{2} (t^{1/\alpha})} $ and then $\tilde{\varphi}$ is not differentiable at $0$. \qed
\end{proof}

Yet we can prove\del{ that} for all continuously differentiable \si\ functions, \nnew{$f$, that} the level set of $f$ going through $x$, \ie, $\level_{f,x}$\nnew{,} is included in the level set of $z \mapsto \nabla f(z) \cdot z$ going through $x$.
\begin{lemma}
For a continuously differentiable \si\ function $\f$ and for $x \in \R^{\n}$,
\begin{align}
\mathcal{L}_{f,x} \subset \level_{z\mapsto \nabla f(z) \cdot z, x} = \acco{y \in \R^{\n}, \nabla \f(y) \cdot y = \nabla \f(x) \cdot x }.
\label{level-sets-vs-gradients}
\end{align}
That is, each level set of $f$ has a single value of $\nabla \f(x) \cdot x$
while also different level sets of $f$ can have the same value of $\nabla \f(x) \cdot x$.
\label{lemma-level-sets-vs-gradients}%
\end{lemma}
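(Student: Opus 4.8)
The key difficulty is that, as shown in Proposition~\ref{counterexample}, a continuously differentiable \si\ function need not decompose as $\varphi\circ p$ with \emph{both} $\varphi$ and $p$ differentiable, so I cannot simply invoke Proposition~\ref{general-euler-si} globally. Instead the plan is to work directly along half-lines and exploit scaling-invariance of the restrictions $\f_x$. Fix $x\in\R^\n$; I want to show every $y$ with $\f(y)=\f(x)$ satisfies $\nabla\f(y)\cdot y=\nabla\f(x)\cdot x$. The starting observation is that for any fixed $z\neq 0$, the scalar $\nabla\f(tz)\cdot(tz)$ is exactly $t\,\f_z'(t)$ by the chain rule (since $\f_z(t)=\f(tz)$ gives $\f_z'(t)=\nabla\f(tz)\cdot z$). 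So the claim reduces to: whenever $tz$ and $sw$ lie on the same level set of $\f$, one has $t\,\f_z'(t)=s\,\f_w'(s)$.

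First I would handle the degenerate directions: if $\f_z$ is constant (equal to $0$ on its half-line, by Proposition~\ref{notplateau}/Proposition~\ref{continuity-monotone}), then $\f_z'\equiv 0$, so $\nabla\f(tz)\cdot(tz)=0$ for all $t\ge 0$, and in particular this covers the point $0$ where $\nabla\f(0)\cdot 0=0$ trivially. For the non-constant directions, Proposition~\ref{continuity-monotone} says $\f_z$ is a strictly monotonic continuous function; by Theorem~\ref{continuousGeneral} we can write $\f=\varphi\circ p$ with $p$ \phone\ and $\varphi$ a homeomorphism, where moreover (away from the trouble set where $p=0$) $p$ is continuous. The cleanest route is then: on the open set $\{p\neq 0\}$, $p$ is locally a composite $\varphi^{-1}\circ\f$, but $\varphi^{-1}$ need not be differentiable. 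So rather than differentiating $p$, I would argue via the level sets of $p$ directly, using that $\level_{\f,x}=\level_{p,x}$ (same level sets, Theorem~\ref{samesublevelsets} with $\varphi$ strictly increasing) together with the scaling structure.

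Concretely, the core computation I expect to carry out is the following. Take $y\in\level_{\f,x}$ with $y\neq 0$ (the case $y=0$ forces $x$ on a constant direction, already handled). Since $\level_{\f,\cdot}=\level_{p,\cdot}$ and $p$ is \phone, the ray through $y$ meets $\level_{\f,x}$ only at $y$ unless the ray is entirely inside (which happens iff $p=0$ there); in the generic case $p(y)=p(x)=:c$ with $c\neq 0$. Now consider the function $t\mapsto\f(ty)=\f_y(t)=\varphi(tc)$ (using $p(ty)=tp(y)=tc$), so $\f_y'(1)=c\,\varphi'(c)$ provided $\varphi$ is differentiable at $c$; and likewise $\nabla\f(x)\cdot x=\f_{x/\|x\|}'(\|x\|)=p(x)\,\varphi'(p(x))=c\,\varphi'(c)$ by the identical identity. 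So the two scalar products agree \emph{as long as $\varphi'(c)$ makes sense}. The point is that $\f_y(t)=\varphi(tc)$ is $\mathcal{C}^1$ in $t$ (being a restriction of the $\mathcal{C}^1$ function $\f$), and $\f_y'(t)=c\varphi'(tc)$ wherever $\varphi$ is differentiable; since $\f_y$ is differentiable everywhere on $(0,\infty)$ and $t\mapsto tc$ is a diffeomorphism there, $\varphi$ is automatically differentiable on $c\cdot(0,\infty)$, i.e.\ at every value attained by $p$ on $\{p\neq 0\}$. Hence $c\varphi'(c)$ is well defined and equals $\nabla\f(y)\cdot y$ for \emph{every} $y$ with $p(y)=c$, which is exactly $\level_{\f,x}$; this gives the inclusion~\eqref{level-sets-vs-gradients}.

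\textbf{Main obstacle.} The delicate point is the handling of the set $\{p=0\}$, equivalently the level set through $0$ and the directions on which $\f$ is constant: there $\varphi$ need not be differentiable at $0$ (cf.\ Proposition~\ref{counterexample}), but on such a direction $\f_z\equiv 0$ so $\nabla\f(tz)\cdot(tz)=0$ identically, and $\nabla\f(0)\cdot 0=0$, so the inclusion holds on that level set by a separate, trivial argument. The second subtlety is making rigorous that ``$\varphi$ is differentiable at every value $c$ taken by $p$ on $\{p\neq0\}$'': this follows because the image of $p$ on each connected component of $\{p\neq 0\}$ is an interval (or half-line) and on it $\varphi$ is, after composing with the diffeomorphism $t\mapsto tc$, equal to $t\mapsto\f_y(t)$ for a suitable $y$, which is $\mathcal{C}^1$. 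Once differentiability of $\varphi$ on $\mathrm{Im}(p)\setminus\{0\}$ is established, the identity $\nabla\f(y)\cdot y=\alpha\,\varphi'(p(y))\,p(y)$ holds for all $y$ with $p(y)\neq 0$ exactly as in Proposition~\ref{general-euler-si} (with $\alpha=1$), and since $\varphi'(p(y))p(y)$ depends only on $p(y)=\f$-level, the inclusion~\eqref{level-sets-vs-gradients} follows; the final remark about different level sets possibly sharing a value of $\nabla\f(z)\cdot z$ is illustrated by the example in Proposition~\ref{counterexample} (where $\nabla\f(z)\cdot z=|z_1|/(1+\log^2|z_1|)$ is not injective in $|z_1|$).
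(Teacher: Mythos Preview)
Your argument is correct, but it takes a much more elaborate route than the paper. The paper's proof is a three-line observation that avoids the decomposition $\f=\varphi\circ p$ entirely: if $y\in\mathcal{L}_{\f,x}$, then scaling-invariance gives $\f(tx)=\f(ty)$ for \emph{all} $t\ge 0$, so $h(t):=\f(tx)-\f(ty)$ is identically zero; differentiating and evaluating at $t=1$ yields $\nabla\f(x)\cdot x=\nabla\f(y)\cdot y$. No case splits, no decomposition, no worry about differentiability of $\varphi$. Your approach instead invokes Theorem~\ref{continuousGeneral}, argues that $\varphi$ inherits differentiability on $\mathrm{Im}(p)\setminus\{0\}$ from the $\mathcal{C}^1$ restrictions $\f_y$, and then reads off the common value $c\,\varphi'(c)$. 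This is valid and has the side benefit of producing the explicit formula $\nabla\f(y)\cdot y=p(y)\,\varphi'(p(y))$ on $\{p\neq 0\}$ (essentially recovering Proposition~\ref{general-euler-si} under weaker hypotheses), but for the bare inclusion~\eqref{level-sets-vs-gradients} it is overkill: the paper's direct differentiation of the identity $\f(tx)=\f(ty)$ is the natural move and sidesteps every subtlety you had to manage.
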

\begin{proof}
Let $y \in \level_{\f, x}$. Since $\f(y) = \f(x)$, then for all $t \geq 0$, $\f(ty) = \f(tx)$. We define the function $h$ on $\R_{+}$ such that for all $t \geq 0$, $h(t) = \f(tx) - \f(ty)$. Then $h$ is the zero function, so is its derivative: $h^{\prime}(t) = \nabla \f(tx) \cdot x - \nabla \f(ty) \cdot y = 0$ for all $t \geq 0$.
In particular we have the result for $t = 1$.
\qed \end{proof}
 
We exhibit in the next proposition a continuously differentiable \si\ function where the inclusion in the above lemma is strict (another example is Lemma~\ref{lemma:saddle}).

\begin{proposition}
Let $p$ be the \ph$_2$ function $x \in \R^{\n} \mapsto \| x \|^{2}$ and $\varphi$ the strictly monotonic function $\varphi(t) = \exp(-t)$ for all $t \geq 0$. Then $\f: x \mapsto \varphi (p (x)) =  \exp( - \| x \|^2)$ is continuously differentiable. For any $0 < r < 1$, there is a unique $s > 1$ such that for any $x \in \S_{r}:$
\begin{align*}
\level_{z\mapsto \nabla f(z) \cdot z, x} = \level_{f,x}\cup \level_{f,\frac{s}{r} x } 
\end{align*}
where $\level_{f,x}$ and $ \level_{f,\frac{s}{r} x } $ are disjoint.
\end{proposition}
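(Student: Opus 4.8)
The plan is to compute everything explicitly for $\f(x)=\exp(-\|x\|^2)$, since both $p(x)=\|x\|^2$ and $\varphi(t)=e^{-t}$ are smooth and the relevant quantities depend only on $\|x\|$. First I would note $\nabla \f(x) = -2x\,\exp(-\|x\|^2)$, so that $\nabla \f(x)\cdot x = -2\|x\|^2 \exp(-\|x\|^2)$. Hence for $x\in\S_r$ we have $\nabla\f(x)\cdot x = -2r^2 e^{-r^2}$, and the level set $\level_{z\mapsto \nabla\f(z)\cdot z,\,x}$ is exactly the set of $y$ with $2\|y\|^2 e^{-\|y\|^2} = 2r^2 e^{-r^2}$, i.e.\ the set of $y$ whose norm $\rho=\|y\|$ solves $g(\rho^2) = g(r^2)$ where $g(u) = u e^{-u}$ on $[0,\infty)$.

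Next I would analyze $g(u)=ue^{-u}$: it satisfies $g(0)=0$, is strictly increasing on $[0,1]$, strictly decreasing on $[1,\infty)$, with maximum $g(1)=e^{-1}$, and $g(u)\to 0$ as $u\to\infty$. Therefore for any value $v=g(r^2)$ with $0<r<1$ (so $0<v<e^{-1}$, and $r^2<1$), the equation $g(u)=v$ has exactly two solutions: $u=r^2$ itself (in $(0,1)$) and a unique $u=s^2$ with $s^2>1$, i.e.\ $s>1$. This $s$ depends only on $r$ and is unique. Consequently the norm $\rho=\|y\|$ of a point $y$ in $\level_{z\mapsto\nabla\f(z)\cdot z,x}$ is either $r$ or $s$, so this level set is $\S_r\cup\S_s$. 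On the other hand, since $\f$ depends only on $\|\cdot\|$, $\level_{\f,x}=\S_r$ and $\level_{\f,\frac{s}{r}x}=\S_{s}$ (because $\|\frac{s}{r}x\|=s$ when $\|x\|=r$), and I would invoke \eqref{homothetic} to write $\level_{\f,\frac{s}{r}x}=\frac{s}{r}\level_{\f,x}=\frac{s}{r}\S_r=\S_s$ cleanly. Finally, $\S_r$ and $\S_s$ are disjoint because $r\neq s$ (indeed $r<1<s$), and their union is precisely $\level_{z\mapsto\nabla\f(z)\cdot z,x}$, which gives the claimed equality $\level_{z\mapsto\nabla f(z)\cdot z,x}=\level_{f,x}\cup\level_{f,\frac{s}{r}x}$ with the two sets disjoint.

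For the opening sentence of the proof I would simply observe that $\f$ is continuously differentiable as a composite of the smooth maps $x\mapsto\|x\|^2$ and $t\mapsto e^{-t}$ (or directly from the formula for $\nabla\f$ above), and that $\f$ is \si\ since it is a strictly monotonic transformation of the \ph$_2$ function $p$. The only genuinely substantive step is the shape analysis of $g(u)=ue^{-u}$ giving existence and uniqueness of the second preimage $s^2>1$; everything else is bookkeeping about sphere-level sets of a radial function, so I do not expect a real obstacle. One small point worth stating carefully is the \emph{uniqueness} of $s$: it follows from strict monotonicity of $g$ on $(1,\infty)$ together with $g(u)\to 0$ and $g$ continuous, via the intermediate value theorem, and the fact that $0<g(r^2)<e^{-1}=g(1)$ guarantees the second solution lies strictly beyond $1$.
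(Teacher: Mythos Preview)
Your proposal is correct and follows essentially the same approach as the paper: both reduce the question to the preimages of $u\mapsto u e^{-u}$ (equivalently $t\mapsto t\varphi'(t)$), use its unimodality on $[0,\infty)$ with peak at $u=1$ to obtain exactly two solutions $r^2$ and $s^2>1$, and then identify the two resulting spheres with $\level_{\f,x}$ and $\level_{\f,\frac{s}{r}x}$. Your version spells out the shape analysis of $g(u)=ue^{-u}$ and the sphere identifications more explicitly than the paper does, but the argument is the same.
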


\begin{proof}
 Remark that the transformation could be chosen to obtain a degree equal to $1$ as in Theorem~\ref{continuousGeneral}, but the differentiability of $p$ would not be guaranteed. 

We notice that $t \to t \varphi^{\prime}(t)$ is not injective on $\R_{+}$. It is injective on $[0, 1)$ and on $[1, \infty)$ and for any $0 < r < 1$ there is a unique $s > 1$ such that 
 \begin{align}
r^{2} \varphi^{\prime}(r^{2}) = s^{2} \varphi^{\prime}(s^{2}). 
 \label{non-injectivity}
 \end{align}
 We will prove that for any such $r, s$ and any $x \in \S_{r}$,
\begin{align}
 \acco{ y \in \R^{\n}, \nabla f(y)\cdot y = \nabla f(x)\cdot x } = \level_{f,x}\cup \level_{f,\frac{s}{r} x } 
 \label{counter-example-levelset-gradient}
\end{align}
Let $x \in \R^{\n}$ such that $\| x \| = r$. By the chain rule,  for all $y \in \R^{\n}$ we have
$$
\nabla f(y) \cdot y = \varphi^{\prime}(p(y)) \nabla p(y) \cdot y =  2 \varphi^{\prime}(p(y)) p(y)\enspace.
$$
Therefore $y \in \acco{ y \in \R^{\n}, \nabla f(y)\cdot y = \nabla f(x)\cdot x }$ if and only if $\|y\|^{2} \varphi^{\prime}(\|y\|^{2}) = r^{2} \varphi^{\prime}(r^{2}) $.
From \eqref{non-injectivity}, we know that this is possible only if $\|y\| = r$ or $\|y\| = s$, \ie, only if $\f(y) = \f(x)$ or $\f(y) = \f(\frac{s}{r}x)$.
Hence the equality in \eqref{counter-example-levelset-gradient}.

We remark that $\level_{f,x}$ and $\level_{\f,\frac{s}{r} x}$ are disjoint whenever $\f(x) \neq \f(\frac{s}{r} x)$. If $x \in \S_{r}$, $\f(x) = e^{-r^{2}} \neq e^{-s^{2} } = \f\pare{\frac{s}{r} x}$ which implies that $\level_{f,x}$ and $\level_{\f,\frac{s}{r} x}$ are disjoint.
\qed \end{proof}

The non-injectivity of $t \to t \varphi^{\prime}(t)$ is essential in the above example to obtain a non-strict inclusion in \eqref{level-sets-vs-gradients} for some \si\ functions. We obtain a weak formulation of Euler's homogeneous function theorem for some \si\ functions in the following proposition.

\begin{proposition}
Let $\f$ be a continuously differentiable \si\ function that can be written as $\varphi\circ p$ where $\varphi$ is a homeomorphism, $p$ is \phone\ and $\varphi$ and $\varphi^{-1}$ are continuously differentiable.
 Assume that the function $\foncfast{\R_{+}}{t}{ t \varphi^{\prime}(t) }{\R}$ is injective. Then for $x\in \R^{\n}$,
\begin{align}
\mathcal{L}_{f,x} = \level_{z\mapsto \nabla f(z) \cdot z, x}
\enspace.
\end{align}
\end{proposition}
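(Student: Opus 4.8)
The plan is to prove the two inclusions separately. One of them, $\mathcal{L}_{f,x}\subseteq\level_{z\mapsto\nabla f(z)\cdot z,x}$, is exactly Lemma~\ref{lemma-level-sets-vs-gradients} and requires no extra assumption on $\varphi$ or $p$, so the whole point is the reverse inclusion, and this is where the injectivity of $t\mapsto t\varphi'(t)$ enters.

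For the reverse inclusion I would first rewrite the map $z\mapsto\nabla f(z)\cdot z$ in terms of $p$. Since $p=\varphi^{-1}\circ\f$ is continuously differentiable and \phone, Euler's homogeneous function theorem gives $\nabla p(z)\cdot z=p(z)$ for all $z$ (both sides vanish at $z=0$), and the chain rule applied to $\f=\varphi\circ p$ yields
\[
\nabla f(z)\cdot z=\varphi'(p(z))\,\bigl(\nabla p(z)\cdot z\bigr)=\varphi'(p(z))\,p(z)\enspace,
\]
so that $\level_{z\mapsto\nabla f(z)\cdot z,x}=\{\,y:\varphi'(p(y))\,p(y)=\varphi'(p(x))\,p(x)\,\}$. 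Next I would record that $\varphi'$ never vanishes: differentiating $\varphi^{-1}\circ\varphi=\mathrm{id}$ gives $\varphi'(t)\,(\varphi^{-1})'(\varphi(t))=1$, hence $\varphi'(t)\neq0$; in particular $\varphi'$ has a constant sign. Then, for $y$ in the set above, $\mathrm{sign}\bigl(\varphi'(p(y))p(y)\bigr)=\mathrm{sign}(\varphi')\cdot\mathrm{sign}(p(y))$ and likewise for $x$, so $p(y)$ and $p(x)$ must have the same sign (in particular $p(y)=0$ iff $p(x)=0$, since $\varphi'\neq0$). On $\R_+$ the function $t\mapsto t\varphi'(t)$ is injective by hypothesis, so from $\varphi'(p(y))p(y)=\varphi'(p(x))p(x)$ we get $p(y)=p(x)$ whenever $p(x)\ge0$; the case $p(x)<0$ (which only occurs when $\image(p)=\R$, i.e.\ when $\f$ has no global optimum) is handled by the symmetric argument on $\R_-$. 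Finally, since $\varphi$ is injective, $p(y)=p(x)$ implies $\f(y)=\varphi(p(y))=\varphi(p(x))=\f(x)$, i.e.\ $y\in\mathcal{L}_{f,x}$, and combining the two inclusions gives $\mathcal{L}_{f,x}=\level_{z\mapsto\nabla f(z)\cdot z,x}$.

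The only genuinely delicate points are the sign bookkeeping for $p$—needed so that injectivity of $t\mapsto t\varphi'(t)$, assumed only on $\R_+$, can actually be invoked—and the separate treatment of the degenerate value $p(x)=0$. Both reduce to the non-vanishing of $\varphi'$, which is the one place where the hypothesis that $\varphi^{-1}$ is continuously differentiable is used beyond merely guaranteeing $p\in C^1$; everything else is the chain rule and Euler's identity.
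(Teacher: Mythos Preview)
Your proof is correct and takes essentially the same approach as the paper: compute $\nabla f(z)\cdot z=\varphi'(p(z))\,p(z)$ via the chain rule and Euler's identity (the paper packages this as Proposition~\ref{general-euler-si}), then use injectivity of $t\mapsto t\varphi'(t)$ together with bijectivity of $\varphi$ to conclude. The paper obtains both inclusions at once from the chain of equivalences $\varphi'(p(y))p(y)=\varphi'(p(x))p(x)\iff p(y)=p(x)\iff f(y)=f(x)$ rather than citing Lemma~\ref{lemma-level-sets-vs-gradients} for one direction, and does not spell out the sign bookkeeping or the observation $\varphi'\neq0$, but these are cosmetic differences.
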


\begin{proof}
It follows from Proposition~\ref{general-euler-si} that for all $x \in \R^{\n}$, $\nabla f(x) \cdot x = \varphi^{\prime}(p(x)) p(x)$.
Thanks to the bijectivity of $\varphi$ along with the injectivity of $t \to t\varphi^{\prime}(t)$, we have: 
\begin{align*}
 \varphi^{\prime}(p(y)) p(y) = \varphi^{\prime}(p(x)) p(x) \iff  p(x) = p(y) \iff \f(x) = \f(y).
 \end{align*}
In other words,
$\mathcal{L}_{f,x} = \acco{y \in \R^{\n}, \nabla \f(y) \cdot y = \nabla \f(x) \cdot x }.
$
\qed \end{proof}

\subsection{Compact Neighborhoods of Level Sets with Non-Vanishing Gradient}
\label{sphere-like}

We prove in this section that any continuously differentiable \si\ function $\f$ with a unique global argmin has level sets, for example $\level_{\f, z_{0}}$, such that for some compact neighborhood of the level set, $\mathcal{N}\supset\level_{\f, z_{0}}$, the gradient does not vanish and $\nabla \f (z) \cdot z > 0$ for all $z\in \mathcal{N}$.


For a continuously differentiable \ph\ function $p$ such that $p(x) > 0$ for all $x \neq 0$, \ie, such that $0$ is the unique global argmin of $p$, this result is a consequence of Euler's homogeneous function theorem which implies that  
\begin{align}
 \nabla p(x) \cdot x > 0 \text{ for all } x \neq 0
 \enspace.
 \label{positiveNabla}
\end{align}
In particular, \eqref{positiveNabla} is true on any compact neighborhood of any level set of $p$, if that compact does not contain $0$. 

We now remark that the property that $\nabla f \neq 0$ for all $x \neq 0$ is not necessarily true if $\f$ is a continuously differentiable \si\ function with a unique global argmin.
Namely, $f$ can have level sets that contain only saddle points.
\begin{lemma}\label{lemma:saddle}
%
%
Let $p (z) = \| z\|^{2}$ and $\varphi (t) =
\dsp\int_{0}^{t} \sin^{2}(u) \diff u$ for $t \geq 0$.
Then $\f = \varphi \circ p$ is a continuously differentiable \si\ function with a unique global argmin and an infinite number of $z$ belonging to different level sets of $\f$, such that $\nabla \f(z) = 0$.
\end{lemma}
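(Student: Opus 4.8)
The plan is to check, one after another, the assertions packed into the statement ($\f$ is \si, $\f$ is continuously differentiable, $\f$ has a unique global argmin, and $\f$ has infinitely many critical points lying on pairwise distinct level sets), each of which reduces to an elementary fact about $\varphi(t)=\int_{0}^{t}\sin^{2}(u)\,\diff u$.

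First I would record that $\varphi'(t)=\sin^{2}(t)\ge 0$ and vanishes only on the discrete set $\{\pi k:k\in\Z\}$, so $\varphi$ is strictly increasing on $\R_{+}$ with $\varphi(0)=0$. Since $p:z\mapsto\|z\|^{2}$ is \ph\ of degree $2$ and takes values in $\R_{+}$, the composite $\f=\varphi\circ p$ is a strictly increasing transformation of a positively homogeneous function, hence \si\ with respect to $0$ (the implication recalled at the start of Section~\ref{transformations}). Next, the chain rule gives, for every $z\in\R^{\n}$,
\[
\nabla\f(z)=\varphi'\!\big(\|z\|^{2}\big)\,\nabla p(z)=2\sin^{2}\!\big(\|z\|^{2}\big)\,z\enspace,
\]
and the right-hand side is continuous on all of $\R^{\n}$ — in particular it tends to $0=\nabla\f(0)$ as $z\to0$ — so $\f$ is continuously differentiable. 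Strict monotonicity of $\varphi$ together with $\varphi(0)=0$ gives $\f(z)=\varphi(\|z\|^{2})>0=\f(0)$ for every $z\ne0$, so $0$ is the unique global argmin of $\f$.

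Finally, from the displayed formula $\nabla\f(z)=0$ if and only if $z=0$ or $\sin^{2}(\|z\|^{2})=0$, that is, if and only if $z=0$ or $\|z\|=\sqrt{\pi k}$ for some positive integer $k$. Hence for each $k\ge1$ every point of the sphere $\S_{\sqrt{\pi k}}$ is a critical point of $\f$, and on $\S_{\sqrt{\pi k}}$ the function $\f$ is constant equal to $\varphi(\pi k)$; since $\varphi$ is strictly increasing the numbers $\varphi(\pi k)$, $k\ge1$, are pairwise distinct, so choosing one point on each such sphere yields infinitely many $z$ lying on pairwise distinct level sets of $\f$ with $\nabla\f(z)=0$. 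By Corollary~\ref{localGlobal} none of these $z$ is a local optimum of $\f$, so each is a saddle-type critical point, which is exactly the phenomenon the lemma is meant to exhibit. The argument is essentially a computation; the only points that deserve an explicit word are that $\varphi$ is strictly increasing even though $\varphi'$ vanishes on $\pi\Z$, and that $\nabla\f$ extends continuously through the origin, so I do not anticipate a genuine obstacle.
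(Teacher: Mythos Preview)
Your proof is correct and follows essentially the same route as the paper: both identify $\varphi'(t)=\sin^{2}t$, argue that $\varphi$ is strictly increasing because its zeros are isolated, and use the chain rule to locate the critical spheres $\|z\|^{2}\in\pi\Z_{+}$. The only cosmetic differences are that the paper writes out the closed form $\varphi(t)=\tfrac{t}{2}-\tfrac{\sin(2t)}{4}$ and argues the saddle nature via the sign change of $\varphi''(t)=\sin(2t)$ at $n\pi$, whereas you invoke Corollary~\ref{localGlobal}; your version is in fact a bit more explicit in verifying the \si\ property, the $C^{1}$ regularity, and the unique argmin.
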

\begin{proof}
The function $\varphi$ is strictly increasing since $\sin^{2}$ is non-negative and has zeros on isolated points. Also, for all $t \geq 0$, $\varphi(t) = \dsp\frac t2 -  \dsp\frac{\sin(2t)}{4}$, where we use that $\cos(2t) = 1 - 2 \sin^{2}(t)$.

For any natural integer $n$, $n \pi$ is a stationary point of inflection of $\varphi$: $\varphi^{\prime}(n\pi) = 0$ and $\varphi^{\prime\prime}(t)= \sin(2t)$ has opposite signs in the neighborhood of $n\pi$. 
 For all $z$ with $\|z\|^{2} \in \pi \Z_{+}$, $\nabla f(z) = \varphi^{\prime}(g(z)) \nabla g(z)= 2 \varphi^{\prime}(\|z\|^{2}) z = 0$.

Hence there exists an infinite number of level sets $\level_{\f, z}$ for which $\nabla \f (z) = 0$. \qed
\end{proof}

 Yet, a consequence of Theorem \ref{equivalences} and Lemma~\ref{lemma-level-sets-vs-gradients} is the existence of a level set of $\f$ such that $\nabla \f(z) \cdot z > 0$ for all $z$ in that level set as shown in the next proposition\nnew{.}

\begin{proposition}
Let $\f$ be a continuously differentiable \si\ function with $0$ as unique global argmin. There exists $z_{0} \in \overline{\B}_1$ with $\level_{\f, z_{0}} \subset \overline{\B}_1$, such that for all $z \in \level_{\f, z_{0}}$,
$\nabla \f(z) \cdot z > 0 $.
\label{no-vanish-gradients}
\end{proposition}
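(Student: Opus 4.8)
The plan is to work with the one–dimensional restrictions $\f_{x}$ and with the fact, established in Lemma~\ref{lemma-level-sets-vs-gradients}, that $z\mapsto\nabla\f(z)\cdot z$ is constant on every level set of $\f$. Since $\f$ is continuously differentiable it is continuous (hence lower semi-continuous), and each $\f_{x}$ is continuously differentiable by the chain rule, with $\f_{x}'(t)=\nabla\f(tx)\cdot x$; moreover, with the standing normalisation $\f(0)=0$, the assumption that $0$ is the unique global argmin is just $\f(x)>0$ for $x\neq 0$. Thus Theorem~\ref{equivalences} applies and tells us that $\f_{x}$ is strictly increasing for every $x\neq 0$. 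Being $C^{1}$ and strictly increasing, each such $\f_{x}$ has $\f_{x}'\geq 0$ everywhere, and $\f_{x}'$ cannot vanish identically on a nondegenerate interval.

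First I would introduce $m=\min_{s\in\S_{1}}\f(s)$, which is attained ($\f$ continuous on the compact $\S_{1}$) and strictly positive ($0\notin\S_{1}$). By Lemma~\ref{lemma-level-sets-vs-gradients} the map $G$ that assigns to each value $v$ in the range of $\f$ the common value of $\nabla\f(z)\cdot z$ on $\{z:\f(z)=v\}$ is well defined; writing $v=\f(ts)$ with $s\in\S_{1}$, $t\geq 0$ gives $G(\f(ts))=\nabla\f(ts)\cdot(ts)=t\,\f_{s}'(t)\geq 0$ for $v>0$. Also $(0,m]$ is contained in the range of $\f$, by the intermediate value theorem applied to $t\in[0,1]\mapsto\f(ts)$ for a fixed $s\in\S_{1}$ (it runs from $0$ to $\f(s)\geq m$).

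The crux is to produce $v_{0}\in(0,m]$ with $G(v_{0})>0$, and I would do this by contradiction. If $G\equiv 0$ on $(0,m]$, fix any $s\in\S_{1}$ and let $t^{\star}=\f_{s}^{-1}(m)$, which belongs to $(0,1]$ since $\f_{s}$ is a continuous strictly increasing function with $\f_{s}(0)=0<m\leq\f_{s}(1)$. For every $t\in(0,t^{\star}]$ we then have $\f_{s}(t)\in(0,m]$, so $0=G(\f_{s}(t))=t\,\f_{s}'(t)$, hence $\f_{s}'\equiv 0$ on $(0,t^{\star}]$ and, by continuity of $\f_{s}'$, on $[0,t^{\star}]$; this forces $\f_{s}$ to be constant on $[0,t^{\star}]$, contradicting $\f_{s}(0)=0<m=\f_{s}(t^{\star})$. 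Hence such a $v_{0}$ exists. This is the step I expect to be the real obstacle, because a strictly increasing $C^{1}$ function may well have isolated zeros of its derivative (Lemma~\ref{lemma:saddle}), so one genuinely has to select a good level value rather than an arbitrary one; the constancy of $\nabla\f(z)\cdot z$ on level sets is exactly what makes ``$G(v_{0})>0$'' the right thing to aim for.

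To conclude, I would pick any $z_{0}$ with $\f(z_{0})=v_{0}$; then $z_{0}\neq 0$ because $v_{0}>0$. If some $y\in\level_{\f,z_{0}}$ had $\|y\|>1$, then with $u=y/\|y\|\in\S_{1}$ the strict increase of $\f_{u}$ would give $\f(y)=\f_{u}(\|y\|)>\f_{u}(1)=\f(u)\geq m\geq v_{0}=\f(y)$, impossible; therefore $\level_{\f,z_{0}}\subset\overline{\B}_{1}$, and in particular $z_{0}\in\overline{\B}_{1}$ (and $\level_{\f,z_{0}}$, being closed and bounded, is compact). Finally, by Lemma~\ref{lemma-level-sets-vs-gradients}, every $z\in\level_{\f,z_{0}}$ satisfies $\nabla\f(z)\cdot z=\nabla\f(z_{0})\cdot z_{0}=G(v_{0})>0$, which is exactly the assertion of the proposition.
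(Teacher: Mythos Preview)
Your proof is correct and follows essentially the same route as the paper's: both use Theorem~\ref{equivalences} to get strict monotonicity of each $\f_{x}$, locate a point on a half-line through the sphere minimiser where the radial derivative is strictly positive, and then invoke Lemma~\ref{lemma-level-sets-vs-gradients} to propagate $\nabla\f(z)\cdot z>0$ across the whole level set. The only cosmetic differences are that the paper picks a single $s\in\S_{1}$ realising the minimum and argues directly that the strictly increasing differentiable $\f_{s}$ must have $\f_{s}'(t)>0$ for some $t\in(0,1]$, whereas you phrase the same step via the auxiliary function $G$ and a contradiction; and the paper obtains $\level_{\f,z_{0}}\subset\overline{\B}_{1}$ by citing the inclusion $\sublevel_{\f,s}\subset\overline{\B}_{1}$ established inside the proof of Theorem~\ref{equivalences}, while you re-derive it directly, which is arguably cleaner.
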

\begin{proof} Since $\f$ is a continuous \si\ function, we have all the equivalences in Theorem \ref{equivalences}.

Inside the proof of Theorem \ref{equivalences}, we have shown that there exists $s\in \S_{1}$ such that $\sublevel_{\f,s} \subset \overline{\B}_1$, with $ \f(s) = \dsp\min_{z\in\S_{1}} \f(z)$. Since $\f_{s}$ is strictly increasing and differentiable, there exists $t \in (0, 1]$ such that $\f_{s}^{\prime}(t) > 0$. Let us denote $z_{0} = t s$.
We have that $\mathcal{L}_{\f,z_{0}} \subset \sublevel_{\f, s} \subset \overline{\B}_1$. And with the chain rule, $\dsp 0 <  \f_{s}^{\prime}(t) = \nabla \f(z_{0}) \cdot \frac{z_{0}}{t}$. Therefore along with Lemma~\ref{lemma-level-sets-vs-gradients}, it follows that for all $z \in \mathcal{L}_{\f,z_{0}}$, $\nabla \f(z) \cdot z = \nabla \f(z_{0}) \cdot z_{0} > 0$.
\qed \end{proof}

From the uniform continuity of $z \mapsto \nabla f(z) \cdot z$ on a compact we deduce the announced result.
\begin{proposition}
Let $\f$ be a continuously differentiable \si\ function with $0$ as unique global argmin.
There exists $\delta > 0$, $z_{0} \in \overline{\B}_1$ with $\level_{\f, z_{0}} \subset \overline{\B}_1$ such that for all $z \in \mathcal{L}_{\f,z_{0}}  + \overline{\B(0, \delta)}$, $\nabla f(z) \cdot z > 0$.
\label{nozero}
\end{proposition}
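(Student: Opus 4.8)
The plan is to bootstrap from Proposition~\ref{no-vanish-gradients}, which already produces a point $z_{0}\in\overline{\B}_1$ with $\level_{\f,z_{0}}\subset\overline{\B}_1$ and $\nabla\f(z)\cdot z>0$ for every $z\in\level_{\f,z_{0}}$. First I would record that $\level_{\f,z_{0}}$ is compact: it is closed (a level set of the continuous function $\f$) and it is contained in the compact set $\overline{\B}_1$, so it is a closed subset of a compact set. Writing $g(z)=\nabla\f(z)\cdot z$, the map $g$ is continuous because $\f$ is continuously differentiable, and it is strictly positive on the nonempty compact set $\level_{\f,z_{0}}$; hence it attains a minimum there, so there is $m>0$ with $g(z)\ge m$ for all $z\in\level_{\f,z_{0}}$.

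Next I would enlarge the level set to a fixed compact neighborhood, e.g.\ $K=\level_{\f,z_{0}}+\overline{\B(0,1)}$, which is compact as a sum of two compact sets. Since $g$ is continuous on $K$, it is uniformly continuous on $K$, so there exists $\delta\in(0,1)$ such that for all $z,z'\in K$ with $\|z-z'\|\le\delta$ one has $|g(z)-g(z')|<m$. Now take any $z\in\level_{\f,z_{0}}+\overline{\B(0,\delta)}$ and write $z=y+w$ with $y\in\level_{\f,z_{0}}$ and $\|w\|\le\delta\le1$; then both $y$ and $z$ lie in $K$ and $\|z-y\|=\|w\|\le\delta$, so $g(z)>g(y)-m\ge m-m=0$, which gives $\nabla\f(z)\cdot z=g(z)>0$. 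This $\delta$ and $z_{0}$ (together with the inclusion $\level_{\f,z_{0}}\subset\overline{\B}_1$ inherited from Proposition~\ref{no-vanish-gradients}) are exactly what the statement asks for.

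I do not expect a genuine obstacle here; the only points requiring mild care are (a) ensuring the $\delta$ coming from uniform continuity is shrunk to be at most $1$ so that the $\delta$-neighborhood of the level set stays inside the compact set $K$ on which uniform continuity was invoked, and (b) keeping the final inequality strict, which works because $|g(z)-g(y)|<m$ is strict while $g(y)\ge m$. If one prefers extra slack one can invoke uniform continuity with $\varepsilon=m/2$ instead of $\varepsilon=m$; the conclusion $g(z)\ge m/2>0$ is then immediate.
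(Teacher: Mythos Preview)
Your proposal is correct and follows essentially the same approach as the paper: bootstrap from Proposition~\ref{no-vanish-gradients}, take the positive minimum of $g(z)=\nabla\f(z)\cdot z$ on the compact level set, pass to uniform continuity on the compact enlargement $\level_{\f,z_{0}}+\overline{\B(0,1)}$, and choose $\delta<1$ accordingly. The only cosmetic difference is that the paper uses the $\varepsilon=m/2$ variant you mention at the end (yielding $g(z)>m/2$), whereas your main argument with $\varepsilon=m$ already gives the required strict positivity.
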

\begin{proof}
Since $\nabla f(z) \cdot z > 0$ for all $z$ in the compact $\mathcal{L}_{\f,z_{0}}$, then $z \mapsto \nabla f(z) \cdot z$ has a positive minimum (that is reached) denoted by $\epsilon
 = \min_{z\in \mathcal{L}_{\f,z_{0}}} \nabla f (z) \cdot z  > 0$.
The continuous function $z \mapsto \nabla f(z) \cdot z$ is uniformly continuous on the compact $ \mathcal{L}_{\f,z_{0}} + \overline{\B(0, 1)}$, therefore there exists a positive number $\delta < 1$ such that if $y, z \in  \mathcal{L}_{\f,z_{0}} + \overline{\B(0, 1)}$ with $\norm{y-z} \leq  \delta$ then $\dsp\abs{ \nabla f(z) \cdot z - \nabla f(y) \cdot y} < \frac{\epsilon}{2}$. Then for all $z\in  \mathcal{L}_{\f,z_{0}} + \overline{\B(0, \delta)}$, there exists $y\in  \mathcal{L}_{\f,z_{0}}$ such that $\dsp\abs{\nabla f(z) \cdot z - \nabla f(y) \cdot y} < \frac{\epsilon}{2}$. Then $\nabla f(z) \cdot z > \nabla f(y) \cdot y - \frac{\epsilon}{2} \geq \frac{\epsilon}{2} > 0$.
Hence $z \mapsto \nabla f(z) \cdot z$ is positive on the compact set $ \mathcal{L}_{\f,z_{0}} + \overline{\B(0, \delta)}$.
\qed \end{proof}

\def\hyph{-\penalty0\hskip0pt\relax}
\newcommand{\SI}{scaling\hyph invariant}

\section{Summary and Conclusion}
This paper reveals that \emph{continuous} scaling-invariant functions are strictly monotonic transformations of \emph{continuous} positively homogeneous functions.
Moreover, we
present necessary and sufficient conditions for
any
\SI\ function to be a strictly monotonic transformation of a positively homogeneous function.
The derivation is solely based on
analyzing restrictions
to the half-lines starting from zero
that need to be strictly monotonic on a nontrivial interval
(or entirely flat).
We also highlight counter-intuitive examples of
scaling-invariant functions that are not monotonic on any nontrivial interval.

We then
present different properties of the level sets of a scaling-invariant function. In particular, Proposition~\ref{nozero} shows that 
continuously differentiable \SI\ functions with a unique
argmin
have a compact level set in a compact neighborhood with non-vanishing gradient.
The level set intersects
any
half-line with origin zero at a single
point---forming a ``star-shaped'' mani\-fold.
%

Scaling-invariant functions play a central role in the analysis of the convergence of some comparison-based stochastic optimization algorithms \cite{auger2016linear}. On this function class, for some translation and scale invariant comparison-based algorithms,
linear convergence can be deduced
when
analyzing the stability of a normalized process\footnote{%
In the case of step-size adaptive algorithms where the state of the algorithm equals a current
solution and a step-size, the normalized process equals to the
solution minus the reference point $x^\star$ of the scaling-invariant function, divided by the step-size.
Stability of the normalized process
is key to imply
linear convergence of the adaptive algorithm.}.
When linear convergence occurs,
the step-size and
the distance of the current
solution to
the optimum
decrease geometrically fast to zero at the same (linear) rate.

A stability analysis leading to linear convergence can be carried out for composites of strictly increasing functions with continuously differentiable scaling-invariant functions.
To obtain basic stability properties deduced from a connection
to
a deterministic control model \cite{chotard2019verifiable},
one can
exploit that
these
functions have Lebesgue negligible level sets
as a consequence of
Proposition~\ref{zeroLevel}.
In addition, the stability study relies on proving that
when the normalized process diverges,
the step-size multiplicative factor converges in distribution
to the
factor on nontrivial linear functions.
The proof exploits level set properties shown in
Proposition~\ref{nozero} and Corollary~\ref{unique-intersection-level-sets}.


\section*{Acknowledgements}

Part of this research has been conducted in the context of a research collaboration between Storengy and Inria. We particularly thank F.~Huguet and A.~Lange from Storengy for their strong support. We would like to thank the anonymous referee for their very careful reviewing of the paper and their suggestions that lead to an improved version.

\bibliographystyle{spmpsci}
\bibliography{scaling-invariance}

\appendix

\section{Bijection Theorem \label{sec:appendixA}}

This standard theorem is reminded for the sake of completeness.

\begin{theorem}[Bijection theorem, {\cite[Theorem~2.20]{muresan2009concrete}}]
Let $I \subset \R$ be a nontrivial interval, $J \subset \R$ and $\varphi: I  \to J$ be a continuous bijection (and therefore strictly monotonic). Then $J$ is an interval and $\varphi$ is a homeomorphism, \ie, $\varphi^{-1}: J  \to I$ is also a continuous bijection, and if $\varphi$ is strictly increasing (respectively strictly decreasing), then $\varphi^{-1}$ is strictly increasing (respectively strictly decreasing). 
\label{homeomorphismTheorem}
\end{theorem}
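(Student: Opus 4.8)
The plan is to decompose the statement into four elementary pieces, each following from the intermediate value theorem (IVT) or from a direct order argument. \textbf{Strict monotonicity of $\varphi$.} First I would prove that a continuous injection on a nontrivial interval is strictly monotonic: if it were not, there would exist $a<b<c$ in $I$ with $\varphi(b)$ not lying between $\varphi(a)$ and $\varphi(c)$, say $\varphi(b)>\max\{\varphi(a),\varphi(c)\}$ (the reverse case being symmetric); choosing $v$ with $\max\{\varphi(a),\varphi(c)\}<v<\varphi(b)$ and applying the IVT to $\varphi$ on $[a,b]$ and on $[b,c]$ (both contained in $I$, as $I$ is an interval) would produce two distinct preimages of $v$, contradicting injectivity. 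So $\varphi$ is strictly monotonic, and up to replacing $\varphi$ by $-\varphi$ I may assume it is strictly increasing.

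\textbf{$J$ is an interval.} Given $y_1<y_2$ in $J$, say $y_i=\varphi(x_i)$, and $y\in(y_1,y_2)$, the segment between $x_1$ and $x_2$ lies in $I$ (again because $I$ is an interval), and the IVT provides $x$ in that segment with $\varphi(x)=y$; hence $y\in J$, so $J$ is convex, i.e., an interval. Since $\varphi$ is a bijection, $\varphi^{-1}:J\to I$ is automatically a bijection; it remains only to check monotonicity and continuity.

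\textbf{$\varphi^{-1}$ is strictly increasing.} If $y_1<y_2$ in $J$ but $\varphi^{-1}(y_1)\ge\varphi^{-1}(y_2)$, then applying the increasing map $\varphi$ yields $y_1\ge y_2$, a contradiction; the strictly decreasing case is identical with the inequalities reversed. \textbf{$\varphi^{-1}$ is continuous.} Here I would invoke the standard fact that a monotonic real function can only have jump discontinuities. If $\varphi^{-1}$ had a jump at some $y_0\in J$, its one-sided limits at $y_0$ would be distinct, so some nondegenerate subinterval of $I$ lying between those one-sided limits (minus at most the single value $\varphi^{-1}(y_0)$) would be disjoint from the range of $\varphi^{-1}$; but that range is all of $I$, which is an interval and cannot omit a nondegenerate subinterval — contradiction. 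At an endpoint of $J$ that belongs to $J$ the same reasoning applies to the single relevant one-sided limit. Therefore $\varphi^{-1}$ is continuous, and $\varphi$ is a homeomorphism.

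The only step requiring genuine care is the continuity of $\varphi^{-1}$: one must handle the endpoints of $J$ correctly and either cite or re-prove the structure of the discontinuities of a monotone function. All the other steps are immediate consequences of the IVT or of order preservation, so the write-up should be short.
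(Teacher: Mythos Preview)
Your proposal is correct. However, the paper does not actually prove this theorem: it is stated in an appendix purely ``for the sake of completeness'' with a citation to \cite[Theorem~2.20]{muresan2009concrete}, and no proof is given. So there is nothing to compare your argument against --- you have supplied a valid proof of a result the authors simply quote. One small remark on your write-up: in the first step, the passage from ``not strictly monotonic and injective'' to ``there exist $a<b<c$ with $\varphi(b)$ extreme among $\varphi(a),\varphi(b),\varphi(c)$'' is true but not entirely immediate; a reader might appreciate one extra sentence justifying that configuration before you invoke the IVT.
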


\end{document}